\definecolor{mycol1}{HTML}{B0413E}  
\definecolor{mycol2}{HTML}{8ED081}  
\definecolor{mycol3}{HTML}{2D93AD}  
\definecolor{mycol4}{HTML}{A0A0A0}  
\theoremstyle{plain}
\newtheorem{theorem}{Theorem}
\newtheorem{lemma}{Lemma}
\theoremstyle{definition}
\newtheorem{definition}{Definition}
\newtheorem{observation}{Observation}
\newtheorem{example}{Example}
\theoremstyle{remark}
\newcommand\freefootnote[1]{%
  \begin{NoHyper}
  \renewcommand\thefootnote{}\footnote{#1}%
  \addtocounter{footnote}{-1}%
  \end{NoHyper}
}
\title{Lower bounds for the integrality gap of the bi--directed cut formulation of the Steiner Tree Problem}
\author{
Ambrogio Maria Bernardelli$^{*}$ \orcidlink{0000-0002-2328-7062} \\
\texttt{ambrogiomaria.bernardelli@unipv.it} \And
Eleonora Vercesi$^{\dagger \ddagger}$ \\
\texttt{eleonora.vercesi@usi.ch}\And 
Stefano Gualandi$^{*} \orcidlink{0000-0003-1981-6782}$ \\
\texttt{stefano.gualandi@unipv.it}  \And 
Monaldo Mastrolilli$^{\S \ddagger}$\\ 
\texttt{monaldo@supsi.ch} \And 
Luca Maria Gambardella$^{\dagger\ddagger}$ \\
\texttt{luca.gambardella@usi.ch}}
\date{\today}
\begin{document}
\maketitle
\begin{abstract}

\noindent In this work, we study the metric Steiner Tree problem on graphs focusing on computing lower bounds for the integrality gap of the bi-directed cut (BCR) formulation and introducing a novel formulation, the Complete Metric (CM) model, specifically designed to address the weakness of the BCR formulation on metric instances.
A key contribution of our work is extending the Gap problem, previously explored in the context of the Traveling Salesman problems, to the metric Steiner Tree problem.
To tackle the Gap problem for Steiner Tree instances, we first establish several structural properties of the CM formulation. We then classify the isomorphism classes of the vertices within the CM polytope, revealing a correspondence between the vertices of the BCR and CM polytopes.
Computationally, we exploit these structural properties to design two complementary heuristics for finding nontrivial small metric Steiner instances with a large integrality gap.
We present several vertices for graphs with a number of nodes $\leq 10$, which realize the best-known lower bounds on the integrality gap for the CM and the BCR formulations.
We conclude the paper by presenting two new conjectures on the integrality gap of the BCR and CM formulations for small graphs.
\end{abstract}
\keywords{Metric Steiner Tree Problem \and Integrality Gap \and Combinatorial Optimization}

\freefootnote{$^{*}$ Department of Mathematics ``Felice Casorati'', University of Pavia, 27100 Pavia, Italy}
\freefootnote{$^{\dagger}$ Faculty of Informatics, Università della Svizzera italiana, 6900 Lugano, Switzerland}
\freefootnote{$^{\S}$ Dipartimento Tecnologie innovative, Scuola universitaria professionale della Svizzera italiana, 6900 Lugano, Switzerland}
\freefootnote{$^{\ddagger}$ Istituto Dalle Molle di studi sull'intelligenza artificiale (IDSIA USI-SUPSI), 6900 Lugano, Switzerland}

\section{Introduction}
Given a graph $G=(V, E)$ with $n := |V| = |\{1, \dots, n\}|$ nodes and cost $c_{ij}\geq 0$ on each edge $\{i,j\} \in E$, and a subset of nodes $T \subset V$ with $t \coloneqq |T| \geq 2$, the Steiner Tree Problem (STP) consists of finding the minimum-cost tree that spans $T$.
The nodes in $T$ are called \emph{terminals}, and those in $V \setminus T$ are called \emph{Steiner nodes}.
Note that, by a relabelling of the vertices, we might just assume that $T = [t] := \{1, \dots, t\}$. In the following, we will often denote simply by $G$ an instance of the STP, that is, the graph, the edge costs, and the set of terminals.
The STP is NP-Hard, and the corresponding decision problem is NP-Complete~\cite{karp2010reducibility}.
The best-known polynomial-time algorithm for the STP guarantees an approximation ratio of 1.39~\cite{byrka2013steiner}, and improving this ratio is still an open problem.
STP is also MaxSNP-Hard~\cite{bern1989steiner}, implying that no approximation algorithm can exist unless $\text{P} = \text{NP}$. More specifically, it has been proved that there is no 1.01-factor approximation algorithm for the STP~\cite{chlebik2008steiner}. Finally, note that STP is fixed-parameter tractable with respect to $\vert T \vert$(See, e.g.,~\cite{dreyfus1971steiner}). The two special cases $\vert T \vert = 2$, which is the shortest path problem, and $\vert T \vert = n$, which is the minimum spanning tree, have dedicated polynomial-time algorithms developed through the years.

STP is often solved to optimality via integer linear programming with several formulations that have been proposed. For a catalog of them, we refer to~\cite{goemans1993catalog}, while for more recent surveys, to~\cite{Koch1998,ljubic2021solving}.

One of the most promising formulations is the Bidirected cut formulation (BCR), introduced by Edmonds~\cite{edmonds1967optimum}.
The core of the BCR formulation consists of fixing a root node $r \in T$, replacing each undirected edge $\{i, j\}$ with two oriented arcs $(i, j)$ and $(j, i)$, and introducing 0--1 flow variables $x_{ij}$ for each arc. 
If we denote by $A$ the set of oriented arcs, and by $\delta^{-}(W) := \{(i, j) \in A \; \vert \; i \not \in W, \; j \in W \}$ the set of arcs entering $W \subset V$, the BCR polytope is as follows:
	\begin{subequations}\label{eq:dcut-polytope}
		\begin{align}
			P_{BCR}(n,t) \coloneqq \{& x \in \mathbb{R}_{\geq 0}^m \, : \\    
			& x_{i j}+x_{j i} \leq 1, & \{i, j\} \in E, \label{eq:dcut_degree} \\
			& x\left(\delta^{-}(W)\right) \geq 1, & W \subset V \setminus \{r\}, \;W \cap T \neq \emptyset \} \label{eq:dcut_subt}
		\end{align}
	\end{subequations}
	where $m=|A|$, and we define the set of integer points of $P_{BCR}(n,t)$ as
	\begin{equation}
		S_{BCR}(n,t) \coloneqq P_{BCR}(n,t) \cap \{0,1\}^m.
	\end{equation}
	Given a STP instance $G$ with $n$ nodes and $t$ terminals and with $c$ as edge costs, we will denote with STP$(G)$ the cost of the subgraph of $G$ having the following properties: $(i)$ it is a tree spanning $T$ and $(ii)$ it is of minimum cost. Equivalently,
\begin{equation}\label{eq:opt-int}
	\text{STP}(G) \coloneqq \min \{c^\intercal x \, | \, x \in S_{BCR}(n,t)\}.
\end{equation}
Let also
\begin{equation}
	\text{opt}_{BCR}(G) \coloneqq \min \{c^\intercal x \, | \, x \in P_{BCR}(n,t)\}
\end{equation}be the optimal value of the linear relaxation of BCR. We will use the same notation for all of the formulations presented in this work. Note that we will drop the arguments $(n,t)$ when unnecessary or clear from the context, for better readability. 
Despite their exponential number, the cut constraints \eqref{eq:dcut_subt} can be separated in polynomial time using any max flow or min-cut algorithm, as detailed in~\cite{Koch1998}.
Note that it has been shown that the optimal value of BCR is independent of the choice of the root~\cite{goemans1993catalog}.
The state-of-the-art (parallel) implementation of a more tight version of the BCR model can be found in SCIP-Jack~\cite{gamrath2017scip}.
Other more recent approaches are reviewed in~\cite{ljubic2021solving}.

Whenever the graph $G$ is complete, and the edge costs are metric, we have a complete metric Steiner Tree problem.
The edge costs define a metric if they satisfy the following properties: (i) $c_{ij} = 0$ if and only if $i = j$; (ii) $c_{ij} \geq 0$ ({\it positivity}), (iii) $c_{ij} = c_{ji}$ ({\it symmetry}), (iv) $c_{ij} \leq c_{ik} + c_{jk}$ ({\it triangle inequality}).
Metric STP instances are relevant in particular for VLSI circuit design~\cite{Peyer2007}, and efficient combinatorial algorithms exist for rectilinear costs~\cite{hougardy2017dijkstra} and for packing Steiner trees~\cite{Grotschel1997}.
A review of results for metric (and rectilinear) STP is contained in~\cite{hougardy2017dijkstra}. 

In approximation algorithms, we are interested in studying the \emph{integrality gap} of an integer formulation, defined as the supremum among all the ratios between the optimal value of an integer solution and the optimal value of its natural LP relaxation.
We now define the integrality gap of the BCR formulation as the following
\begin{equation}\label{GAP}
\alpha := \sup_{G} \dfrac{\text{STP}(G)}{\text{opt}_{BCR}(G)}.
\end{equation}
For the BCR, the exact value of $\alpha$ is unknown, but it is 
bounded below by $\frac{6}{5}$~\cite{vicari2020simplexbasedsteinertree}, which improved the previous bound of  $\frac{36}{31} \cong 1.161$ in~\cite{byrka2013steiner}. Until 2024, the best--known upper bound for the integrality gap was 2. Recently,~\cite{byrka2024bidirectedcutrelaxationsteiner} demonstrated that this upper bound can be improved to 1.9988.

Note that proving that $\alpha < 1.39$ would lead to a better approximation algorithm with respect to the state of the art.
The lower bound introduced by~\cite{byrka2013steiner} is based on a recursive family of instances, depending on a parameter $p$ and having an integrality gap which tends asymptotically to $\frac{36}{31}$ for $n \rightarrow \infty$. 

\paragraph{Main Contributions}
This paper presents a novel formulation for the complete metric STP, called the Complete Metric (CM) formulation.
This formulation exploits the metric costs to define a polytope, denoted by $P_{CM}$ and defined in \eqref{eq:pcm}, having a smaller number of vertices compared to the polytope $P_{BCR}$ implied by the BCR formulation.
The main motivation for introducing our formulation is to enable a study on the integrality gap of small-size instances 
of the Steiner Tree problem by adapting the approach designed for the Symmetric TSP and presented in~\cite{boyd2005computing,boyd2007structure}.
Without our new CM formulation, it would be nearly impossible to use the method of~\cite{boyd2007structure} 
due to the number of vertices of $P_{BCR}$, which includes a huge number of feasible vertices for the cut constraints but which will never be optimal for any \emph{metric} cost vector.
For instance, \Cref{tab:vertices-dcut-cm} reports the number of feasible and optimal vertices of $P_{BCR}$ and $P_{CM}$, computed by complete enumeration, for instances with 4 and 5 nodes and 3 or 4 terminals, showing the potential impact of our approach on the overall number of vertices.

The core intuition of our new CM formulation is that in a complete metric graph, any Steiner node is visited only if its outdegree is at least twice the indegree, 
because if the indegree and outdegree of a Steiner node are both equal to 1, then an optimal solution with a smaller cost that avoids detouring in that node exists. 
Note that this requirement is profoundly different from asking a Steiner node not to be a leaf, which is a much weaker condition.
The existence of such a solution is guaranteed by the property that the graph is metric and complete. 
Indeed, such a solution may not exist in a non-complete graph.
Using these relations on the degree of Steiner nodes, we introduce a new family of constraints to the BCR formulation, reflecting our main intuition.

The main contributions of this paper are as follows:
\begin{enumerate}
\item We prove in \Cref{thm:21-metric} that our new polytope $P_{CM}$ contains among the integer vertices only those that could be optimal for metric costs. Given an integer vertex, we describe the cost vector that makes that vertex optimal.
\item We prove in \Cref{lemma:cm_connected} a connectedness property of the points of $P_{CM}$, and in \Cref{lemma:max_edges_cm} we set an upper bound on the number of edges in integer points.
\item  We characterize in \Cref{lemma:adding-zeros,lemma:vertex-iso1,lemma:vertex-iso2} isomorphic vertices of polytopes of different dimensions, namely, we link vertices of the polytope corresponding to the STP with $n$ nodes and $t$ terminals with the vertices corresponding to the STP with $n+1$ nodes and $t$ terminals, and vice versa.
\item Exploiting the previous results, we introduce two new heuristic algorithms for enumerating the vertices of $P_{CM}$.
Using these two algorithms, we compute vertices of $P_{CM}$ and $P_{BCR}$ with the largest known integrality gap for instances with up to 10 vertices.
\end{enumerate}
To the best of our knowledge, this work is the first attempt to extend the work of~\cite{boyd2007structure} to the metric Steiner Tree problem.
Note that an interesting STP instance with a small value of $n$ (i.e., $n=15$), but with a large integrality gap (i.e, equal to $\frac{8}{7}$) is the Skutella's graph shown in \Cref{fig:15-8} and reported in~\cite{konemann2011partitionbased}.
In our computational results, we will present other interesting instances with less than 10 nodes but having a large integrality gap, see \Cref{fig:max-and-diff-gaps}. 
\begin{figure}[t!]
\centering
\scalebox{0.5}{\begin{tikzpicture}
		\draw[line width=3, color=mycol4] (0,-2) -- (-6, -4) node [currarrow, pos=0.8, sloped, rotate=180] {};
		\draw[line width=3, color=mycol4] (0,-2) -- (-4, -4) node [currarrow, pos=0.8, sloped, rotate=180] {};
		\draw[line width=3, color=mycol4] (0,-2) -- (-2, -4) node [currarrow, pos=0.8, sloped, rotate=180] {};
		\draw[line width=3, color=mycol4] (0,-2) -- (0, -4) node [currarrow, pos=0.8, sloped] {};
		\draw[line width=3, color=mycol4] (0,-2) -- (2, -4) node [currarrow, pos=0.8, sloped] {};
		\draw[line width=3, color=mycol4] (0,-2) -- (4, -4) node [currarrow, pos=0.8, sloped] {};
		\draw[line width=3, color=mycol4] (0,-2) -- (6, -4) node [currarrow, pos=0.8, sloped] {};
		\draw[line width=3, color=mycol4] (-6, -4) -- (-6, -8) node [currarrow, pos=0.8, sloped] {};
		\draw[line width=3, color=mycol4] (-6, -4) -- (-4, -8) node [currarrow, pos=0.8, sloped] {};
		\draw[line width=3, color=mycol4] (-6, -4) -- (-2, -8) node [currarrow, pos=0.8, sloped] {};
		\draw[line width=3, color=mycol4] (-6, -4) -- (0, -8) node [currarrow, pos=0.8, sloped] {};
		\draw[line width=3, color=mycol4] (-4, -4) -- (-6, -8) node [currarrow, pos=0.8, sloped, rotate=180] {};
		\draw[line width=3, color=mycol4] (-4, -4) -- (-4, -8) node [currarrow, pos=0.8, sloped] {};
		\draw[line width=3, color=mycol4] (-4, -4) -- (2, -8) node [currarrow, pos=0.8, sloped] {};
		\draw[line width=3, color=mycol4] (-4, -4) -- (4, -8) node [currarrow, pos=0.8, sloped] {};
		\draw[line width=3, color=mycol4] (-2, -4) -- (-6, -8) node [currarrow, pos=0.8, sloped, rotate=180] {};
		\draw[line width=3, color=mycol4] (-2, -4) -- (-2, -8) node [currarrow, pos=0.8, sloped] {};
		\draw[line width=3, color=mycol4] (-2, -4) -- (2, -8) node [currarrow, pos=0.8, sloped] {};
		\draw[line width=3, color=mycol4] (-2, -4) -- (6, -8) node [currarrow, pos=0.8, sloped] {};
		\draw[line width=3, color=mycol4] (0, -4) -- (-6, -8) node [currarrow, pos=0.8, sloped, rotate=180] {};
		\draw[line width=3, color=mycol4] (0, -4) -- (0, -8) node [currarrow, pos=0.8, sloped] {};
		\draw[line width=3, color=mycol4] (0, -4) -- (4, -8) node [currarrow, pos=0.8, sloped] {};
		\draw[line width=3, color=mycol4] (0, -4) -- (6, -8) node [currarrow, pos=0.8, sloped] {};
		\draw[line width=3, color=mycol4] (2, -4) -- (-2, -8) node [currarrow, pos=0.8, sloped, rotate=180] {};
		\draw[line width=3, color=mycol4] (2, -4) -- (0, -8) node [currarrow, pos=0.8, sloped, rotate=180] {};
		\draw[line width=3, color=mycol4] (2, -4) -- (2, -8) node [currarrow, pos=0.8, sloped] {};
		\draw[line width=3, color=mycol4] (2, -4) -- (4, -8) node [currarrow, pos=0.8, sloped] {};
		\draw[line width=3, color=mycol4] (4, -4) -- (-4, -8) node [currarrow, pos=0.8, sloped, rotate=180] {};
		\draw[line width=3, color=mycol4] (4, -4) -- (-2, -8) node [currarrow, pos=0.8, sloped, rotate=180] {};
		\draw[line width=3, color=mycol4] (4, -4) -- (4, -8) node [currarrow, pos=0.8, sloped] {};
		\draw[line width=3, color=mycol4] (4, -4) -- (6, -8) node [currarrow, pos=0.8, sloped] {};
		\draw[line width=3, color=mycol4] (6, -4) -- (-4, -8) node [currarrow, pos=0.8, sloped, rotate=180] {};
		\draw[line width=3, color=mycol4] (6, -4) -- (0, -8) node [currarrow, pos=0.8, sloped, rotate=180] {};
		\draw[line width=3, color=mycol4] (6, -4) -- (2, -8) node [currarrow, pos=0.8, sloped, rotate=180] {};
		\draw[line width=3, color=mycol4] (6, -4) -- (6, -8) node [currarrow, pos=0.8, sloped] {};
		\fill[color=mycol1] (0,-2) circle (0.2);
		\fill[color=white] (0,-2) circle (0.08);
		\fill[color=mycol2] (-6.2, -4.2) rectangle (-5.8, -3.8);
		\fill[color=mycol2] (-4.2, -4.2) rectangle (-3.8, -3.8);
		\fill[color=mycol2] (-2.2, -4.2) rectangle (-1.8, -3.8);
		\fill[color=mycol2] (-0.2, -4.2) rectangle (0.2, -3.8);
		\fill[color=mycol2] (1.8, -4.2) rectangle (2.2, -3.8);
		\fill[color=mycol2] (3.8, -4.2) rectangle (4.2, -3.8);
		\fill[color=mycol2] (5.8, -4.2) rectangle (6.2, -3.8);
		\fill[color=mycol3] (-6,-8) circle (0.2);
		\fill[color=mycol3] (-4,-8) circle (0.2);
		\fill[color=mycol3] (-2,-8) circle (0.2);
		\fill[color=mycol3] (0,-8) circle (0.2);
		\fill[color=mycol3] (2,-8) circle (0.2);
		\fill[color=mycol3] (4,-8) circle (0.2);
		\fill[color=mycol3] (6,-8) circle (0.2);
\end{tikzpicture}}
\caption{Small STP instance with a large integrality gap: Skutella's graph with $n=15$, $t=8$, and $\alpha=\frac{8}{7}$~\cite{konemann2011partitionbased}. The hollow circle represents the root, the circles represent the terminals, and the squares represent the Steiner nodes. Every arc correspond to a variable $x_{ij}$ of value equal to $\frac{1}{4}$.}
\label{fig:15-8}
\end{figure}

\begin{table}[t!]
\caption{Number of feasible and optimal vertices for $P_{BCR}$ and $P_{CM}$. While the BCR polytope has several (feasible) vertices that cannot be optimal for any metric cost, the CM polytope does not suffer this issue.}
\label{tab:vertices-dcut-cm}    
\centering
\begin{tabular}{ccrrcrr}
	\toprule
	&   & \multicolumn{2}{c}{$P_{BCR}$} && \multicolumn{2}{c}{$P_{CM}$} \\
	$n$ &  $t$  & feasible & optimal && feasible & optimal \\
	\midrule
	4 & 3 &  256 & 70 && 4 & 4\\
	5 & 3 &  28\,345 & 3\,655 && 5 & 5\\
	5 & 4 &  24\,297 & 3\,645 && 44 & 44\\
	\bottomrule
\end{tabular}

\end{table}

\paragraph{Outline}
The outline of this paper is as follows.
\Cref{sec:ig_dcut} reviews the approach introduced in~\cite{benoit2008finding,elliott2008integrality} for computing the integrality gap of small instances ($n \leq 10$) of the Traveling Salesman Problem and presents how to apply a similar approach to the BCR formulation of the STP. 
The critical step is the complete enumeration of the vertices of the BCR polytope, which are several thousand already for $n=5$, as shown in Table 1.
\Cref{sec:CM} presents the CM formulation, and we prove interesting properties of the corresponding polytope, which allows us to apply the methodology of~\cite{benoit2008finding,elliott2008integrality} to look for small instances of STP with a large integrality gap. 
In \Cref{sec:vertex_enum}, we observe that the exhaustive enumeration of vertices is intractable for $n \geq 6$, and we present two heuristic procedures for generating vertices of $P_{CM}$, exploiting graph isomorphism.
In \Cref{sec:results}, we present the vertices for graph $n \leq 10$, realizing the best-known lower bounds on the integrality gap for the CM and the BCR formulation. 
We conclude the paper with a perspective on future works.

\section{Integrality gap of BCR for fixed \texorpdfstring{$\bm n$}{n}}
\label{sec:ig_dcut}
In this section, we present our strategy to compute the integrality gap of the BCR formulation for STP, which is based on the approach introduced in~\cite{benoit2008finding,elliott2008integrality}. 

\subsection{Problem definition for computing the integrality gap of small instances}
Let $\mathcal{G}_{n, t} \coloneqq \{G \, | \, \text{metric with } n \text{ nodes and } T = [t]\}$. 
	We define 
	\begin{align}
		\alpha_{G} & := \dfrac{\text{STP}(G)}{\text{opt}_{BCR}(G)}\\
		\alpha_{n,t} & := \sup_{G \in \mathcal{G}_{n,t}} \alpha_{G}. 
\end{align}
Note that $\alpha_{G}$ is the integrality gap of a given instance of metric STP, while $\alpha_{n,t}$  is the maximum integrality gap once we fix both the cardinality of $T$ and $V$. 
Clearly, the integrality gap \eqref{GAP} is equal to
\begin{equation}
	\alpha = \sup_{n, t} \alpha_{n, t}.
\end{equation}
For $t = n$, $P_{BCR}$ is proven to be integral~\cite{edmonds1967optimum}, while~\cite{goemans1993catalog} shows the same result for $t = 2$, and hence $\alpha_{n, 2} = \alpha_{n, n} = 1$.
However, the exact value of $\alpha_{n, t}$ for $2 < t < n$ is unknown. 

In this work, we extend the approach presented in~\cite{benoit2008finding,elliott2008integrality} for TSP for computing the exact value of $\alpha$ for the BCR formulation.
Similarly to~\cite{benoit2008finding,elliott2008integrality}, if we divide the costs $c_{ij}, i, j \in V$ of an instance $G$ for the optimal value $\text{STP}(G)$, we obtain another instance $G'$, having an optimal value $\text{STP}(G)=1$ but the same set of optimal solutions.
Hence, defining $\mathcal{G}_{n, t}^1 \coloneqq \{G \, | \, \text{metric with } n \text{ nodes and } T = [t], \, \text{STP}(G) = 1\}$, we obtain
\[
\alpha_{n,t}  \coloneqq \sup_{G \in \mathcal{G}_{n, t}^1} \dfrac{1}{\text{opt}_{BCR}(G)}, 
\]
and hence
\begin{equation}
	\frac{1}{\alpha_{n,t}}  \coloneqq \inf_{G \in \mathcal{G}_{n, t}^1} \text{opt}_{BCR}(G). \label{eq:quadratic_compressed}
\end{equation}
As already mentioned, an instance $G$ with set of terminals $T \neq [t]$, can always be mapped into an equivalent graph with $T = [t]$ through a simple node relabeling. We will refer to both the instances and their resulting solutions as \emph{isomorphic}.

Note that \eqref{eq:quadratic_compressed} leads to an optimization problem having linear constraints but a quadratic objective function (note that $c_e$ and $x_{ij}$ are both decision variables) that can be formulated as follows:
\begin{subequations}\label{eq:quadratic_expanded}
	\begin{align}
		\min \quad & \sum_{\{i, j\} \in E} c_e (x_{ij} + x_{ji}) & \label{eq:quadratic_obj}\\    
		\mbox{s.t.} \quad    & x_{i j}+x_{j i} \leq 1, & e=\{i, j\} \in E,  \label{eq:quadratic_degree}\\
		& x\left(\delta^{-}(W)\right) \geq 1, & W \subset V \setminus \{1\}, \;W \cap \{1, \ldots, t\} \neq \emptyset,  \label{eq:quadratic_subtour} \\
		& 0 \leq x_{ij} \leq 1, & \forall i, j \in V, \; i \neq j, \label{eq:quadratic_bounds}\\
		& c_{ij} \geq 0, & \forall \{i, j\} \in E, \label{eq:quadratic_nn}\\
		& c_{ij} \leq c_{ik} + c_{jk}, &  \forall \{i, j\}, \{i, k\}, \{j, k\} \in E, \label{eq:quadratic_triangle_ineq} \\
		& \sum_{\{i, j\} \in E} c_e (\bar{z}_{ij} + \bar{z}_{ji}) \geq 1, & \forall \bar{z} \in S_{BCR}. \label{eq:quadratic_geq_1}
	\end{align}
\end{subequations}
Constraints \eqref{eq:quadratic_degree}--\eqref{eq:quadratic_bounds} ensures the feasibility of vector $x$, while constraints \eqref{eq:quadratic_nn}--\eqref{eq:quadratic_triangle_ineq} ensure the property of $c$ being metric. Constraint \eqref{eq:quadratic_geq_1} enforce that for every feasible integer solution, the cost is at least $1$. This ensures that the optimum is exactly $1$, that is, $\text{STP}(G) = 1$.
Our preliminary computational results show that \eqref{eq:quadratic_obj}--\eqref{eq:quadratic_geq_1} are intractable even for $n = 5$ and $t=3, 4$.
Therefore, we proceed as done in~\cite{benoit2008finding,elliott2008integrality}, exploiting the vertex representation of $P_{BCR}$.
Similarly to~\cite{benoit2008finding,elliott2008integrality}, and by recalling that for each cost vector $c$, there exists an optimal solution attained at a vertex, we can re-write \eqref{eq:quadratic_obj}--\eqref{eq:quadratic_geq_1} as a \emph{linear program} for each vertex $\overline{x}$ of $P_{BCR}(n,t)$.
\begin{subequations}\label{eq:linear}
	\begin{align}
		\min  \quad & \sum_{\{i, j\} \in E} c_e (\overline{x}_{ij} + \overline{x}_{ji}) & \\   
		\mbox{s.t.} \quad  & c_{ij} \geq 0, & \forall \{i, j\} \in E,\\
		& c_{ij} \leq c_{ik} + c_{jk}, &  \mathllap{\forall \{i, j\}, \{i, k\}, \{j, k\}   \in E,}\\
		& \text{the optimal solution of } c \text{ is attained at } \overline{x}, & \label{eq:linear_stay_here} \\
		& \sum_{\{i, j\} \in E} c_e (\bar{z}_{ij} + \bar{z}_{ji}) \geq 1, & \forall \bar{z} \in S_{BCR}. \label{eq:linear_geq_1}
	\end{align}
\end{subequations}
and define $\text{Gap}(\overline{x})$ as the optimal value of \eqref{eq:linear}.

As in~\cite{benoit2008finding,elliott2008integrality}, we observe that constraint \eqref{eq:linear_stay_here} can be formulated using the complementary slackness conditions. 
Such conditions ensure that $\overline{x}$ belongs to the points minimizing the STP with cost $c$. 
A detailed formulation is presented below.
	\begin{subequations}
		\begin{align}
			\label{eq:gap_dcut_ibj} \text{(Gap)} \quad    \min \quad  & \sum_{\{i, j\} \in E} c_e (\overline{x}_{ij} + \overline{x}_{ji})  \\
			\mbox{s.t.} \quad & c_{ij} \leq c_{ik} + c_{jk}, &  \forall \{i, j\}, \{i, k\}, \{j, k\}   \in E, \\
			&  y_e + \sum_{(i, j) \in \delta^-(W)} z_W + d_{ij} \leq c_e, &  \forall  (i, j) \in A, \\
			& y_e = 0, &   \mathllap{\forall(i, j) \text{ s.t. }\overline{x}_{ij}+\overline{x}_{j i} < 1, \; e=\{i, j\} \in E,} \\
			&  z_W  = 0, &  \mathllap{\forall W \subset V \setminus \{r\}, \,  W \cap T \neq \emptyset \text{ s.t. } \sum_{(i, j) \in \delta^-(W)} \overline{x}_{ij} > 1,} \\
			&  d_{ij} = 0, &   \forall (i, j) \in A \text{ s.t }\overline{x}_{ij} = 1, \\
			& y_e + \sum_{(i, j) \in \delta^-(W)} z_W + d_{ij} - c_e = 0, & \forall (i, j) \in A \text{ s.t } \overline{x}_{ij} > 0, \\
			& \sum_{\{i, j\} \in E} c_e (\bar{z}_{ij} + \bar{z}_{ji}) \geq 1, & \forall \bar{z} \in S_{BCR}(n,t),\label{eq:gap_dcut_geq_1}\\
			& y_e, d_{ij}, d_{ji}\leq 0, &  \forall e=\{i, j\} \in E, \\
			&  z_W \geq 0, &   \mathllap{\forall W \subset V \setminus \{r\}, \,  W \cap T \neq \emptyset,} \\
			&  c_{ij} \geq 0, &  \forall \{i, j\}   \in E.  \label{eq:gap_dcut_c_bound}   
		\end{align}
	\end{subequations} 

\section{A novel formulation for the complete metric case}
\label{sec:CM}
Scip-Jack~\cite{gamrath2017scip}, the state-of-art solver for the (Graphic) STP, relies on a modified version of the BCR formulation, firstly introduced in~\cite{Koch1998}.
The formulation used by Scip-Jack (SJ) is presented through its associated polytope:

\begin{subequations}\label{eq:psj}
		\begin{align}
			P_{SJ}(n,t) \coloneqq \{& x \in [0,1]^m \, : \\    
			& x\left(\delta^{-}(W)\right) \geq 1, &  W \subset V \setminus \{r\}, \;W \cap T \neq \emptyset, \label{eq:scip_subt}\\ 
			& x\left(\delta^{-}(r)\right) = 0, &  \label{eq:scip_root_inflow}\\
			& x\left(\delta^{-}(v)\right) = 1, & v \in T \setminus \{r\}, \label{eq:scip_inflow_t}\\
			& x\left(\delta^{-}(v)\right) \leq  1, & v \in V \setminus T,  \label{eq:scip_inflow_s}\\
			& x\left(\delta^{-}(v)\right) \leq x\left(\delta^{+}(v)\right), & \forall v \in V \setminus T, \label{eq:scip_inout1}\\ 
			& x\left(\delta^{-}(v)\right) \geq x_a, & \forall a \in \delta^{+}(v), v \in V \setminus T \label{eq:scip_inout2} \},
		\end{align}
\end{subequations}
where $\delta^{+}(W):= \{(i, j) \; \vert \; i  \in W, \; j \not \in W \}$. 

Constraints \eqref{eq:scip_root_inflow}--\eqref{eq:scip_inflow_s} describe the inflow of every node: the first equation ensures that no inflow is present in the root, the second equation ensures that the inflow of terminal nodes is exactly equal to $1$, since every terminal must be reached, and the third equation ensures that the inflow of non-terminal nodes is smaller or equal than $1$ since a non-terminal node may or may not be part of an optimal solution. Note that both terminal and non-terminal nodes have an inflow of at most $1$ so that at most one path exists from the root to any node. Constraint \eqref{eq:scip_inout1} ensures that non-terminal nodes cannot be leaves of the solution. Constraint \eqref{eq:scip_inout2} ensures that no flow is generated from non-terminal nodes.
Notice that this formulation is not specific to the metric case we want to attack, as illustrated by the example below.
\begin{example}\label{ex:not_connected}
	Let $G = (V, E)$ be a complete metric graph with $V = \textcolor{blue}{[5]}$ and let $T = \textcolor{blue}{[2]}$. Define $x$ as the following
	\begin{equation}
		x_{ij} =
		\begin{cases}
			1,  & \text{if } (i,j) \in \{(1, 2), (3, 4), (4, 5), (5, 3)\},\\
			0, & \text{else.} 
		\end{cases}
	\end{equation}
	We have that $x$ is feasible for the SJ formulation with $r = 1$. In particular, every Steiner node has degree $2$, since it has indegree and outdegree $1$. However, this solution is never optimal for any metric cost since by setting $x_{3,4} = x_{4,5} = x_{5, 3} = 0$ we obtain a feasible solution with a strictly smaller cost. Note that, in particular, $x$ is not connected.
\end{example}
To prevent this issue, we introduce a stronger formulation tailored to the Complete Metric (CM) case, which is presented below through its associated polytope
\begin{subequations}\label{eq:pcm}
		\begin{align}
			P_{CM}(n,t) \coloneqq \{& x \in [0,1]^m \, : \\    
			& x\left(\delta^{-}(W)\right) \geq 1, &  W \subset V \setminus \{r\}, \;W \cap T \neq \emptyset, \label{eq:cm_subt}\\ 
			& x\left(\delta^{-}(r)\right) = 0, \label{eq:cm_root_inflow}\\
			& x\left(\delta^{-}(v)\right) \leq  1, & v \in V \setminus \{r\}, \label{eq:cm_inflow}\\
			& 2x\left(\delta^{-}(v)\right) \leq x\left(\delta^{+}(v)\right), & v \in V \setminus T \label{eq:cm_in_out_flow} \}.
		\end{align}
\end{subequations}
In particular, in our new formulation, the left-hand side of Constraint \eqref{eq:scip_inout1} is multiplied by $2$. This ensures that a non-terminal node has an outflow that is at least twice the inflow. Hence, for the integer solutions, a non-terminal node is visited only if its outflow is at least $2$. The idea is that, in a complete metric graph, if the inflow and the outflow of a non-terminal node are both equal to $1$, then there exists an optimal solution with a smaller cost that avoids detouring in that node. The existence of such a solution is guaranteed by the property that the graph is metric and complete. 
Note that such a solution may not exist in a non-complete graph, for example, when $G = (V, E)$ with $V = \{1, 2, 3\}$, $E = \{\{1,3\}, \{2, 3\}\}$ and $T = \{1, 2\}$.
We also avoid adding the equivalent of Constraint \eqref{eq:scip_inout2} because of the following lemma. 

\begin{lemma}
	\label{lemma:constr-redundancy}
	When dealing with positive costs, Constraint \eqref{eq:scip_inout2} is redundant even for the simpler BCR formulation.
\end{lemma}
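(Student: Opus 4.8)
The plan is to read ``redundant'' in the optimization sense: for every strictly positive cost vector $c$, every optimal solution of $\min\{c^\intercal x : x \in P_{BCR}(n,t)\}$ already satisfies \eqref{eq:scip_inout2}, so appending \eqref{eq:scip_inout2} to the BCR constraints leaves the optimal value unchanged. Concretely, I would fix a non-terminal node $v \in V \setminus T$ and an outgoing arc $(v,j) \in \delta^+(v)$, let $\overline{x}$ be an optimal solution, and argue by contradiction that $\overline{x}_{vj} > \overline{x}(\delta^-(v))$ cannot hold. The first observation is that $x_{vj}$ occurs among the cut inequalities \eqref{eq:dcut_subt} only for sets $W$ with $v \notin W$ and $j \in W$; it also appears in the degree inequality \eqref{eq:dcut_degree} and in non-negativity, but only in ways that a \emph{decrease} of $x_{vj}$ cannot violate. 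I then split into two cases according to whether any such cut is tight at $\overline{x}$.

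In the first case, suppose no inequality \eqref{eq:dcut_subt} with $v\notin W$, $j\in W$, $W\cap T \neq \emptyset$ holds with equality. Since $\overline{x}_{vj} > \overline{x}(\delta^-(v)) \geq 0$, we may decrease $\overline{x}_{vj}$ by a sufficiently small $\varepsilon > 0$ while preserving feasibility; because $c_{vj} > 0$, this strictly lowers the objective, contradicting optimality. This is the only place where positivity of the costs is used.

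In the second case, some $W$ with $v\notin W$, $j\in W$, $W\cap T \neq \emptyset$ satisfies $\overline{x}(\delta^-(W)) = 1$. Here I would uncross $v$ into $W$ by considering $W' := W \cup \{v\}$. As $v$ is non-terminal, $v \neq r$, so $W' \subseteq V \setminus \{r\}$ with $W' \cap T = W \cap T \neq \emptyset$, and the cut inequality $\overline{x}(\delta^-(W')) \geq 1$ applies. Passing from $W$ to $W'$ removes from the directed cut exactly the arcs from $v$ into $W$ (a set $F \subseteq \delta^+(v)$ containing $(v,j)$) and inserts exactly the arcs entering $v$ from outside $W'$ (a set $B \subseteq \delta^-(v)$), so that $\overline{x}(\delta^-(W')) = \overline{x}(\delta^-(W)) - \overline{x}(F) + \overline{x}(B)$. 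Combining $\overline{x}(\delta^-(W')) \geq 1 = \overline{x}(\delta^-(W))$ yields $\overline{x}(F) \leq \overline{x}(B)$, whence $\overline{x}_{vj} \leq \overline{x}(F) \leq \overline{x}(B) \leq \overline{x}(\delta^-(v))$, contradicting the assumption.

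I expect the only delicate point to be the bookkeeping in the second case: correctly identifying which arcs leave and which enter the directed cut when $v$ is absorbed into $W$, and verifying that $W'$ is still an admissible index for \eqref{eq:dcut_subt}. Everything else reduces to a one-line feasibility or optimality argument. It is worth noting that the second case uses feasibility alone, so the positivity hypothesis on the costs is genuinely needed only to rule out the ``slack-everywhere'' configuration of the first case.
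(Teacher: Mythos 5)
Your proof is correct, and it takes a genuinely different route from the paper's. The paper proves the lemma by passing to the Multi Commodity Flow formulation: it invokes the projection theorem of Chopra and Rao to obtain flow variables $f^t_{ij}\leq x_{ij}$ with conservation at Steiner nodes, uses optimality under strictly positive costs to argue $x_{ij}=\max_{t}f^t_{ij}$, and then chains flow conservation at the Steiner node $k$ to get $x_a\leq x(\delta^-(k))$. You instead work directly on the cut system: if every cut constraint whose left-hand side contains $x_{vj}$ is slack, positivity of $c_{vj}$ lets you shrink $x_{vj}$ and contradict optimality; if some such cut $W$ is tight, absorbing $v$ into $W$ (legitimate since $v\notin T$ implies $v\neq r$, so $W\cup\{v\}$ still indexes a valid constraint) exchanges exactly the arcs $F\subseteq\delta^+(v)$ into $W$ for the arcs $B\subseteq\delta^-(v)$ from outside $W\cup\{v\}$, and feasibility alone gives $\overline{x}_{vj}\leq \overline{x}(F)\leq \overline{x}(B)\leq \overline{x}(\delta^-(v))$. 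The bookkeeping in your second case is right, and your observation about where positivity enters is accurate; in fact the paper's step ``$x_{ij}=\max_t f^t_{ij}$ by optimality'' secretly relies on the same shrink-a-slack-variable move as your first case. What each approach buys: yours is self-contained and elementary, requiring no external theorem and making the role of the cost hypothesis transparent; the paper's is shorter once the MCF equivalence is granted and exposes the flow-decomposition structure that the authors reuse in their discussion of PHI spanning vertices.
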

A proof of this fact is contained in \Cref{app:lemma-proof}.

Before discussing how we use this formulation to retrieve information regarding the integrality gap of the BCR formulation, we list some properties of the CM formulation that we consider interesting.

\subsection{Properties of the complete metric formulation}

We first show that with mild hypothesis the set of integer solutions of the SJ formulation coincides with the set of integer solutions of the CM formulation.

\begin{lemma}\label{lemma:cm_solutions}
	Let $G$ be a complete metric graph with $c \in \mathbb{R}^{(n-1) \times n/2}$ defining the edge weights and let $x\in S_{SJ}$ be optimal for the cost vector $c$. If
	\begin{equation}\label{eq:strict_cost}
		c_{ij} < c_{ik} + c_{kj} \quad \forall \{i, j\}, \{i, k\}, \{j, k\}   \in E,
	\end{equation}then $x \in S_{CM}$ and it is optimal for the same cost vector. Moreover, if $y \in S_{CM}$ is optimal for $G$, then $y \in S_{SJ}$ and it is optimal for $G$.
\end{lemma}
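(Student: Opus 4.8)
The statement has two directions, and the plan is to handle each by a local exchange argument using the strict triangle inequality and the structural difference between the two polytopes, namely the factor $2$ appearing in Constraint \eqref{eq:cm_in_out_flow} versus Constraint \eqref{eq:scip_inout1}.

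For the first direction, I start with $x \in S_{SJ}$ optimal for $c$ satisfying \eqref{eq:strict_cost}. The only constraint that could fail for membership in $S_{CM}$ is \eqref{eq:cm_in_out_flow}, since all other CM constraints \eqref{eq:cm_subt}--\eqref{eq:cm_inflow} are already present (or implied) in the SJ formulation. So I must show that every Steiner node $v \in V \setminus T$ that is used by $x$ has outdegree at least twice its indegree. For an integer point, by \eqref{eq:scip_inflow_s} the indegree of a Steiner node is at most $1$; hence the only problematic case is a Steiner node with indegree $1$ and outdegree $1$ (indegree $0$ forces outdegree $0$ by \eqref{eq:scip_inout2}, so such a node is unused and \eqref{eq:cm_in_out_flow} holds trivially). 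The key step is to rule out a Steiner node $v$ with exactly one incoming arc $(u,v)$ and one outgoing arc $(v,w)$: I would argue that replacing the two arcs $(u,v)$ and $(v,w)$ by the single shortcut arc $(u,w)$ yields a point that is still feasible for SJ (it still reaches every terminal, inflow/outflow bounds are preserved, and connectivity is maintained because any path through $v$ is rerouted directly), and by the \emph{strict} triangle inequality \eqref{eq:strict_cost} its cost drops by $c_{uv}+c_{vw}-c_{uw} > 0$. This contradicts optimality of $x$, so no such node exists, every used Steiner node has outdegree $\geq 2 \geq 2\cdot(\text{indegree})$, and thus $x \in S_{CM}$. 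Optimality of $x$ within $S_{CM}$ is then immediate: since $S_{CM} \subseteq S_{SJ}$ as sets of integer points (which I would verify as part of the argument), and $x$ is optimal over the larger set $S_{SJ}$, it remains optimal over $S_{CM}$.

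For the converse, I take $y \in S_{CM}$ optimal for $G$ and must show $y \in S_{SJ}$ and that it is optimal there. The inclusion $S_{CM} \subseteq S_{SJ}$ is the easier half: Constraints \eqref{eq:cm_subt}--\eqref{eq:cm_inflow} directly give \eqref{eq:scip_subt}--\eqref{eq:scip_inflow_s}; Constraint \eqref{eq:cm_in_out_flow} implies \eqref{eq:scip_inout1} since $x(\delta^-(v)) \leq 2x(\delta^-(v)) \leq x(\delta^+(v))$ for nonnegative integer flows; and the flow--generation constraint \eqref{eq:scip_inout2} holds by \Cref{lemma:constr-redundancy}, which states it is redundant for positive costs. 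Hence $y \in S_{SJ}$. For optimality, I would invoke the first direction: let $x^*$ be an SJ-optimal integer solution, which by the first part also lies in $S_{CM}$; since $y$ is CM-optimal we have $c^\intercal y \leq c^\intercal x^*$, while $x^* \in S_{SJ}$ being SJ-optimal gives $c^\intercal x^* \leq c^\intercal y$, so the two values coincide and $y$ is SJ-optimal.

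\textbf{Main obstacle.} The delicate step is the shortcut argument in the first direction: I must check carefully that rerouting through the shortcut $(u,w)$ preserves \emph{all} SJ feasibility constraints, in particular the cut constraints \eqref{eq:scip_subt} (connectivity to every terminal set) and the possibility that $u = w$ or that the shortcut arc already carries flow, which could violate the upper bound $x \in [0,1]^m$ or create a self-loop. I expect that ruling out $u = w$ follows from acyclicity of an optimal arborescence-like solution together with the root-inflow condition \eqref{eq:scip_root_inflow}, and that the completeness of $G$ guarantees the arc $(u,w)$ exists; the remaining bookkeeping is routine but must be stated precisely to make the cost-strict-decrease contradiction airtight.
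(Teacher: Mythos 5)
Your proposal is correct and follows essentially the same route as the paper's proof: both directions rest on the observation that the only CM constraint an SJ-optimal integer point could violate is \eqref{eq:cm_in_out_flow}, ruled out by shortcutting a degree-$(1,1)$ Steiner node via the strict triangle inequality \eqref{eq:strict_cost}, and the converse optimality is obtained by comparing $y$ with an SJ-optimal solution that the first part places in $S_{CM}$. Your write-up is, if anything, slightly more careful than the paper's (explicit case analysis on the indegree, the appeal to \Cref{lemma:constr-redundancy} for constraint \eqref{eq:scip_inout2}, and the bookkeeping about the shortcut arc), but the underlying argument is the same.
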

\begin{proof}
	First, we prove $x \in S_{CM}$. We clearly have that $S_{CM} \subset S_{SJ}$. Suppose then by contradiction that there exists $x \in S_{SJ}$, optimal for $G$ such that $x \notin S_{CM}$. Because of the constraints that describe the two models, there exists $j\in V \setminus T$
	\begin{align*}
		\sum_{i \neq j} x_{ij} &\leq \sum_{k \neq j} x_{jk} && \text{Feasible for \eqref{eq:psj}} \\
		2 \cdot \sum_{i \neq j} x_{ij}  &> \sum_{k \neq j} x_{jk} && \text{Infeasible for \eqref{eq:pcm}}.
	\end{align*}
	Combining the two inequalities and the integrality of $x$, it holds that there exist $i \in V$ such that $x_{ij} = 1$. Using \eqref{eq:scip_inout2}, it follows that it exists $k \in V$ such that $x_{jk} = 1$ . Since we are in a complete metric graph, setting these two variables to zero and setting $x_{ik}= 1$ gives us a feasible solution, which is also of smaller cost because of hypothesis \eqref{eq:strict_cost}, which is in contradiction with the optimality of $x$. The optimality follows from the optimality for SJ.
	
	For the second statement, let $y \in S_{CM}$ optimal for $G$. Clearly, $y \in S_{SJ}$. Suppose by contradiction that there exists $z \in S_{SJ}$ such that $c^\intercal z < c^\intercal y$. For the first part of the proof, we have that $z \in S_{CM}$ optimal for CM, which contradicts the optimality of $y$.
\end{proof}

\begin{observation}\label{obs:cm_exist}
	Note that, without hypothesis \eqref{eq:strict_cost}, we can say that given a complete metric graph $G$ and $x \in S_{SJ}$ optimal for $G$, there exists $x'\in S_{SJ}$ optimal for $G$ such that $x' \in S_{CM}$  optimal for $G$. In particular, $x'$ is chosen as one of the optimal integer solutions of SJ that avoids detouring into non-terminal nodes, where detouring into a node means entering with one edge and exiting with one edge. The same reasoning can be applied to the BCR formulation.
\end{observation}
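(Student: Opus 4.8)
The final statement to prove is \Cref{obs:cm_exist}: without the strict triangle inequality \eqref{eq:strict_cost}, given a complete metric graph $G$ and an optimal $x \in S_{SJ}$, there exists an optimal $x' \in S_{SJ}$ that also lies in $S_{CM}$.

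\paragraph{Proof plan}
The plan is to start from an arbitrary optimal integer solution $x \in S_{SJ}$ and iteratively eliminate the ``detours'' through non-terminal nodes until we reach a solution in $S_{CM}$, all the while preserving both feasibility for SJ and optimality. First I would observe, exactly as in the proof of \Cref{lemma:cm_solutions}, that $S_{CM} \subset S_{SJ}$, so the only way an optimal $x \in S_{SJ}$ can fail to be in $S_{CM}$ is if constraint \eqref{eq:cm_in_out_flow} is violated at some non-terminal node $j \in V \setminus T$. By the same degree argument used in that lemma---combining the SJ inflow/outflow inequality \eqref{eq:scip_inout1} with the negation of \eqref{eq:cm_in_out_flow} and using integrality---such a node $j$ must have indegree exactly $1$ and outdegree exactly $1$; call the incident arcs $(i,j)$ and $(j,k)$ with $x_{ij} = x_{jk} = 1$. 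This is the detour we want to remove.

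Next I would define the modified solution $x'$ by setting $x_{ij} = x_{jk} = 0$ and $x_{ik} = 1$ (the short-cut arc, which exists because the graph is complete), leaving all other entries unchanged. The key points to verify are: (a) $x'$ is again feasible for the SJ polytope---the inflow at $j$ drops to $0$ so \eqref{eq:scip_inflow_s} and \eqref{eq:scip_inout1}, \eqref{eq:scip_inout2} are still satisfied there, the inflow at $k$ is unchanged (we traded arc $(j,k)$ for arc $(i,k)$), the outflow at $i$ is unchanged, and all cut constraints \eqref{eq:scip_subt} remain satisfied because any directed path in the support of $x$ that used $j$ now routes directly through $i \to k$; (b) the cost does not increase, since by the (non-strict) triangle inequality $c_{ik} \le c_{ij} + c_{jk}$, so $c^\intercal x' \le c^\intercal x$, and since $x$ was optimal, $x'$ is optimal as well. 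Repeating this short-cutting operation reduces the number of detour nodes by at least one each time while never increasing the number of edges or the cost, so after finitely many steps we obtain an optimal SJ solution with no detours, which therefore satisfies \eqref{eq:cm_in_out_flow} at every non-terminal node and hence lies in $S_{CM}$.

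The main obstacle I anticipate is verifying that feasibility of the cut constraints \eqref{eq:scip_subt} is genuinely preserved under the short-cutting operation, rather than just asserting it. The subtlety is that rerouting $i \to j \to k$ to the single arc $i \to k$ removes node $j$ from the arc support, so one must argue that no cut $\delta^-(W)$ loses its only crossing arc; the cleanest way is to check that for every $W$ the quantity $x'(\delta^-(W))$ either equals $x(\delta^-(W))$ or changes in a way that keeps it $\ge 1$, examining the four cases according to whether $j$ and $k$ lie inside or outside $W$. A second, more delicate point is ensuring the process terminates: because $c_{ik} \le c_{ij} + c_{jk}$ could hold with equality, the cost need not strictly decrease, so termination cannot be argued via cost; instead I would use the monotone decrease in the number of non-terminal nodes carrying positive inflow (or equivalently the number of arcs in the support), which strictly drops at each step and is bounded below, guaranteeing finiteness.
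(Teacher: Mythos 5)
Your proposal is correct and follows exactly the route the paper intends: the observation is stated in the paper with only a one-line justification, and the short-cutting argument you spell out (a violating Steiner node must have indegree $=$ outdegree $=1$ by combining \eqref{eq:scip_inout1} with the negation of \eqref{eq:cm_in_out_flow} and integrality, then replace $i\to j\to k$ by $i\to k$ using the non-strict triangle inequality so that cost does not increase) is the same one already used in the proof of \Cref{lemma:cm_solutions}, here merely losing strictness. Your two anticipated obstacles are the right ones and both resolve as you expect: the cut-constraint case analysis goes through (in the two cases where $x(\delta^-(W))$ drops by one, passing to $W\setminus\{j\}$ or $W\cup\{j\}$ shows the original cut value was at least $2$), and termination follows from the strict decrease in the number of support arcs, since $x_{ik}=1$ is impossible beforehand by the inflow bound at $k$.
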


\begin{observation}
	Note that \Cref{lemma:cm_solutions} does not hold when removing the integrality hypothesis. Take, for example, as graph $G$ the metric completion of the instance \texttt{se03} of the SteinLib~\cite{Koch2001}, having 13 nodes and 7 terminals. We have that
	\begin{equation*}
		\text{opt}_{SJ}(G) = 11 < 12 =  \text{opt}_{CM}(G) =  \text{STP}(G).
	\end{equation*}
	We then have that $\text{opt}_{SJ}(\cdot) \leq \text{opt}_{CM}(\cdot)$, and so the integrality gap of the CM formulation is a lower bound for the integrality gap of the SJ formulation on complete metric graphs. Moreover, the bound is not always tight. The same holds for the BCR formulation.
\end{observation}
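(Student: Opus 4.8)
The plan is to split the observation into its separate assertions and handle each in turn: (i) that \Cref{lemma:cm_solutions} fails once integrality is dropped, (ii) that $\text{opt}_{SJ}(\cdot) \le \text{opt}_{CM}(\cdot)$ on every complete metric graph, (iii) that this forces the integrality gap of CM to lower bound that of SJ, (iv) that the two gaps can genuinely differ, and (v) that the same conclusions hold for BCR. Throughout I write $\text{IG}_F$ for the integrality gap of a formulation $F$.

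The heart of the matter is (ii), which I would obtain from the polytope inclusion $P_{CM} \subseteq P_{SJ}^-$, where $P_{SJ}^-$ denotes $P_{SJ}$ with the redundant constraint \eqref{eq:scip_inout2} removed. I verify this constraint by constraint: a point $x \in P_{CM}$ lies in $[0,1]^m$ and satisfies $x(\delta^-(r)) = 0$ and the cut constraints \eqref{eq:cm_subt} (which are identical to \eqref{eq:scip_subt}) verbatim; for a terminal $v \in T \setminus \{r\}$ the cut $W = \{v\}$ gives $x(\delta^-(v)) \ge 1$, which together with \eqref{eq:cm_inflow} sharpens to the SJ equality $x(\delta^-(v)) = 1$; the Steiner bound $x(\delta^-(v)) \le 1$ is exactly \eqref{eq:cm_inflow}; and $2x(\delta^-(v)) \le x(\delta^+(v))$ together with $x \ge 0$ implies $x(\delta^-(v)) \le x(\delta^+(v))$, i.e.\ \eqref{eq:scip_inout1}. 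Minimising over the larger set then gives $\text{opt}_{SJ}^- \le \text{opt}_{CM}$, while \Cref{lemma:constr-redundancy} (whose argument carries over to SJ) gives $\text{opt}_{SJ} = \text{opt}_{SJ}^-$; hence $\text{opt}_{SJ} \le \text{opt}_{CM}$.

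For (iii) I note that the integer optimum of each formulation equals $\text{STP}(G)$ (by \Cref{lemma:cm_solutions} and \Cref{obs:cm_exist}), so from $\text{opt}_{SJ}(G) \le \text{opt}_{CM}(G) \le \text{STP}(G)$ I get $\text{STP}(G)/\text{opt}_{CM}(G) \le \text{STP}(G)/\text{opt}_{SJ}(G)$ for every $G$, and taking suprema yields $\text{IG}_{CM} \le \text{IG}_{SJ}$. Claims (i) and (iv) are then settled by a single computation: solving both relaxations on the metric completion $G$ of \texttt{se03} returns $\text{opt}_{SJ}(G) = 11 < 12 = \text{opt}_{CM}(G) = \text{STP}(G)$. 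The strict inequality shows the fractional optima do not coincide, so the integrality hypothesis in \Cref{lemma:cm_solutions} cannot be removed; and since it makes $\text{IG}_{CM}(G) = 1 < \tfrac{12}{11} = \text{IG}_{SJ}(G)$, the bound of (iii) is not tight in general.

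For (v) the argument is parallel and even cleaner at the inclusion step: every $x \in P_{CM}$ satisfies $x \ge 0$ and the cut constraints, so $P_{CM} \subseteq P_{BCR}^-$, where $P_{BCR}^-$ drops the anti-parallel degree bound \eqref{eq:dcut_degree}. Granting that this bound is redundant for positive costs (so that $\text{opt}_{BCR} = \text{opt}_{BCR}^-$) gives $\text{opt}_{BCR} \le \text{opt}_{CM}$, and since the computation yields $\text{opt}_{BCR}(G) < 12 = \text{opt}_{CM}(G)$ on \texttt{se03}, both $\text{IG}_{CM} \le \text{IG}_{BCR}$ and its non-tightness follow. I expect this last redundancy to be the main obstacle: unlike \eqref{eq:scip_inout2}, the bound $x_{ij} + x_{ji} \le 1$ can genuinely be violated by fractional points of $P_{CM}$, so it must be justified on its own---through an uncrossing / flow-cancellation argument showing that a positive-cost optimum of the pure cut relaxation never routes anti-parallel flow summing above $1$---which is more delicate than the SJ step, where the deleted constraint was already known to be redundant by \Cref{lemma:constr-redundancy}.
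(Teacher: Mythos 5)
The paper gives no proof of this observation: it is justified only by the reported computation on the metric completion of \texttt{se03}, with the inequality $\text{opt}_{SJ}\le\text{opt}_{CM}$ left as an implicit consequence of CM being a ``tightening'' of SJ. Your reconstruction of the SJ half is the right one and matches the paper's implicit logic: you correctly notice that $P_{CM}\not\subseteq P_{SJ}$ because CM omits \eqref{eq:scip_inout2}, and you route around this exactly as the paper intends, via \Cref{lemma:constr-redundancy} (which the authors state precisely to justify dropping that constraint) together with the genuine inclusion $P_{CM}\subseteq P_{SJ}^-$, whose constraint-by-constraint verification you carry out correctly (the derivation of the terminal equality from the singleton cut plus \eqref{eq:cm_inflow} is the one non-trivial check). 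The only soft spot there is that \Cref{lemma:constr-redundancy} is stated and proved for BCR; transferring it to $P_{SJ}^-$ means re-running the MCF argument of \Cref{app:lemma-proof} in the presence of the extra degree constraints, which is plausible but not literally ``the same lemma.'' The derivation of $\text{IG}_{CM}\le\text{IG}_{SJ}$ from the per-instance inequality, and the use of \texttt{se03} to settle both the failure of \Cref{lemma:cm_solutions} without integrality and the non-tightness, are fine.

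The BCR half is where the gap you flag is real, and it is a gap in the paper as well. Two remarks that narrow it. First, the inclusion $P_{CM}\subseteq P_{BCR}$ fails only potentially on pairs of Steiner nodes: if $\{i,j\}$ contains the root, \eqref{eq:cm_root_inflow} kills one arc and \eqref{eq:cm_inflow} bounds the other; if it contains a terminal and not the root, the cut constraint for $W=\{i,j\}$ combined with the two inflow bounds gives
\begin{equation*}
x_{ij}+x_{ji}=x(\delta^-(i))+x(\delta^-(j))-x(\delta^-(W))\le 2-1=1,
\end{equation*}
so \eqref{eq:dcut_degree} is already implied there. Only the Steiner--Steiner case resists this argument, and for that case one indeed needs the flow-cancellation fact you anticipate (equivalently, that the pure covering relaxation $\{x\ge 0,\ x(\delta^-(W))\ge 1\}$ admits, for nonnegative costs, an optimal solution with $\min(x_{ij},x_{ji})=0$ and $x\le 1$, hence satisfying \eqref{eq:dcut_degree}). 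You identify this as the missing piece but do not supply it, so the BCR claim remains unproven in your write-up --- though no more so than in the paper itself, which asserts it without argument.
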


A concept we will use extensively in the following is the one of \emph{support graph}
	\begin{definition}[Support graph]
		Given $x\in P_{\mathcal{F}}, \; \mathcal{F}$ being one among CM, SJ, BCR, or any other valid formulation for the ST, let $G_x = (V_x, E_x)$ denote the corresponding weighted support (di)graph, that is $V_x = \{i \in V \, : \, x(\delta^+(i)) + x(\delta^-(i)) > 0\}$ and $E_x = \{e = \{i,j\} \in E  \, : \, x_{ij} + x_{ji} > 0\}$, and weight equal to $x_{ij} \; \forall i\neq j \in V$. 
\end{definition}

An interesting property of the CM formulation is connectedness. Constraints \eqref{eq:scip_subt} enforce that in an SJ solution, all the terminal nodes belong to the same connected component, but this is not guaranteed for non-terminal nodes. For the CM formulation instead, the following lemma holds.

\begin{lemma}\label{lemma:cm_connected}
	The support graph of any feasible point of $P_{CM}$ is connected.
\end{lemma}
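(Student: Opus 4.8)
The plan is to argue by contradiction: I assume the support graph $G_x$ of some feasible $x \in P_{CM}$ is disconnected and then exhibit a connected component that violates one of the defining constraints. As a preliminary step I note that every terminal and the root lie in $V_x$. Indeed, applying the cut constraint \eqref{eq:cm_subt} to $W = V \setminus \{r\}$ (which meets $T$ because $t \ge 2$) gives $x(\delta^+(r)) = x(\delta^-(V\setminus\{r\})) \ge 1$, so $r$ has positive out-flow; and taking $W = \{v\}$ for a terminal $v \neq r$ gives $x(\delta^-(v)) \ge 1$. Hence, if $G_x$ is disconnected there is a connected component $C$ with $r \notin C$, and I split into two cases according to whether $C$ meets $T$.

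In the first case $C \cap T \neq \emptyset$, so $C$ is an admissible set for \eqref{eq:cm_subt}: we have $C \subset V \setminus \{r\}$ and $C \cap T \neq \emptyset$, giving $x(\delta^-(C)) \ge 1$. But because $C$ is a \emph{connected component} of $G_x$, no support arc crosses its boundary: any arc $(i,j)$ with $i \notin C$, $j \in C$ and $x_{ij} > 0$ would put $\{i,j\}$ into $E_x$ and hence $i$ into $C$. Thus $x(\delta^-(C)) = 0$, a contradiction.

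The second case, $C \cap T = \emptyset$, is the one I expect to be the crux, since the cut constraints say nothing about terminal-free vertex sets; here I instead exploit the factor-$2$ degree inequality \eqref{eq:cm_in_out_flow} through a global summation. Summing $2\,x(\delta^-(v)) \le x(\delta^+(v))$ over all $v \in C$ (legitimate since every node of $C$ is a Steiner node) gives $2\sum_{v\in C} x(\delta^-(v)) \le \sum_{v \in C} x(\delta^+(v))$. Because $C$ is a component, all arcs crossing its boundary carry zero weight, so both the total in-flow and the total out-flow of $C$ collapse to the same quantity, namely the total weight $F := \sum_{i,j \in C} x_{ij}$ of the arcs internal to $C$. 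The inequality becomes $2F \le F$, forcing $F = 0$.

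Finally I close the second case by observing that $F = 0$ is impossible for a genuine component: by the definition of the support graph every $v \in V_x$ has $x(\delta^+(v)) + x(\delta^-(v)) > 0$, hence is incident to at least one positive-weight arc and is not isolated in $G_x$; therefore any component, in particular $C$, carries at least one internal arc of positive weight, so $F > 0$. This contradiction completes the argument and shows $G_x$ is connected. The only delicate point is making the summation in the Steiner-only case watertight---verifying that the boundary-crossing arcs vanish and that the in- and out-flow sums genuinely coincide---whereas the terminal case follows immediately from the cut constraints.
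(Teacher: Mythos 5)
Your proof is correct and follows essentially the same route as the paper's: the paper likewise disposes of components containing terminals via the cut constraints \eqref{eq:cm_subt} and then sums the degree inequality \eqref{eq:cm_in_out_flow} over a terminal-free component, using the absence of boundary-crossing arcs to conclude that the internal weight $F$ satisfies $2F \le F$, hence $F = 0$. Your write-up is merely more explicit about why the root and terminals lie in $V_x$ and why $F>0$ for a genuine component, details the paper leaves implicit.
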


\begin{proof}
	It suffices to prove that no connected components without terminals exist since every terminal belongs to the same connected component because of Constraint \eqref{eq:cm_subt}. Let $x \in P_{CM}$ and let $H \subset V\setminus T$ be a subset of vertices inducing a connected component of $G_x$. Constraint \eqref{eq:cm_in_out_flow} implies
	\begin{equation*}
		\sum_{i, j \in H} x_{ij} = \sum_{j \in H} \sum_{i \in H} x_{ij} \geq \sum_{j \in H} 2 \sum_{i \in H} x_{ji},
	\end{equation*}
	because $H$ induces a connected component, so there is no inflow / outflow in / from $V \setminus H$. Re-arranging everything, we have
		\begin{equation*}
			\sum_{j \in H} 2 \sum_{i \in H} x_{ji} = 2 \sum_{i, j \in H} x_{ji} = 2\sum_{i, j \in H} x_{ij}.
	\end{equation*}
	The only possibility is that $\sum_{i, j \in H} x_{ij} = 0$, so no connected component without terminals can be part of a feasible solution for CM.
\end{proof}

Note that \Cref{lemma:cm_connected} does not hold for the SJ formulation, as \Cref{ex:not_connected} shows.

Another interesting property of the CM formulation deals with constraint reduction. In this case, we can prove theoretical results on the number of edges in an integer CM solution and, consequently, on the number of Steiner nodes.
\begin{lemma}\label{lemma:max_edges_cm}
	Let $x \in S_{CM}(n,t)$. Then, $x$ verifies
	\begin{equation}\label{eq:max_edges_cm}
		\sum_{i, j \in V, i \neq j} x_{ij} \leq \min(n-1, 2t-3).        
	\end{equation}
\end{lemma}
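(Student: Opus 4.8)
The plan is to read the quantity $\sum_{i \neq j} x_{ij}$ as the number of arcs $a$ in the support digraph $G_x$ of the integer point $x$, to show that $G_x$ is forced to be a spanning arborescence rooted at $r$, and then to extract both bounds from elementary degree counting. The two halves of the minimum come from two different countings of $a$: counting by in-degree gives the $n-1$ term, counting by out-degree gives the $2t-3$ term.

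First I would nail down the easy ingredients. Since $x$ is integral, $a := \sum_{i \neq j} x_{ij} = \sum_{v \in V} x(\delta^-(v))$ is exactly the arc count. By \eqref{eq:cm_root_inflow} the root contributes $0$ and by \eqref{eq:cm_inflow} every other vertex contributes at most $1$, so $a \leq n-1$ at once, which already yields the first term. I would also record that taking $W = V \setminus \{r\}$ in \eqref{eq:cm_subt} (legitimate since $t \geq 2$ forces $W \cap T \neq \emptyset$) gives $x(\delta^+(r)) \geq 1$, so $r$ lies in the support with out-degree at least one, and that each terminal $\neq r$ has in-degree exactly $1$ by applying \eqref{eq:cm_subt} to its singleton together with \eqref{eq:cm_inflow}.

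The structural heart is a sandwich argument. Let $k$ be the number of vertices of $G_x$ and $e := |E_x|$ its number of undirected edges. By \Cref{lemma:cm_connected} the support is connected, hence $e \geq k-1$; each undirected support edge carries at least one arc, hence $a \geq e$; and the in-degree bound above gives $a \leq k-1$. Chaining these forces $k-1 \leq e \leq a \leq k-1$, so all are equalities. I would then read off that $G_x$ is a tree ($e = k-1$), that no edge carries both orientations ($a = e$), and that every non-root vertex has in-degree exactly $1$ (the bound $a = k-1$ is tight); in short, $G_x$ is a spanning arborescence rooted at $r$.

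I expect the real work to be the second bound, and the delicate point throughout to be insisting on integrality at each step. Since all $t$ terminals appear in $G_x$, writing $s$ for the number of Steiner nodes present gives $k = t + s$ and $a = k-1 = t+s-1$. The arborescence conclusion is exactly what rules out in-degree-$0$ Steiner ``sources'': every Steiner node of $G_x$ has in-degree exactly $1$, so \eqref{eq:cm_in_out_flow} forces out-degree at least $2$. Counting arcs by out-degree then gives $a = \sum_v x(\delta^+(v)) \geq x(\delta^+(r)) + 2s \geq 1 + 2s$, and combining $a = t+s-1$ with $a \geq 1+2s$ yields $s \leq t-2$, whence $a = t+s-1 \leq 2t-3$. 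Together with $a \leq n-1$ this proves \eqref{eq:max_edges_cm}. The main obstacle is conceptual rather than computational: both the arborescence structure and the out-degree-$\geq 2$ consequence of \eqref{eq:cm_in_out_flow} fail for fractional points, so the argument must use integrality decisively at precisely these places.
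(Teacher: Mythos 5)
Your proof is correct, and it uses the same raw ingredients as the paper's (connectedness from \Cref{lemma:cm_connected}, the in-flow constraints \eqref{eq:cm_root_inflow}--\eqref{eq:cm_inflow}, the doubling constraint \eqref{eq:cm_in_out_flow}, and $x(\delta^+(r))\geq 1$ from the cut $W=V\setminus\{r\}$), but it organizes them differently. The paper obtains $n-1$ by first arguing that $G_x$ is a tree and then counting its edges, and obtains $2t-3$ by a purely algebraic rearrangement: it splits the arc count by in-degree, bounds the Steiner in-flow by half the Steiner out-flow via \eqref{eq:cm_in_out_flow}, and solves the resulting inequality for $\sum x_{ij}$. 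You instead get $n-1$ immediately from in-degree counting (no connectivity needed for that half), and for $2t-3$ you first pin down the structure --- the sandwich $k-1\leq e\leq a\leq k-1$ forcing $G_x$ to be a spanning arborescence of its support rooted at $r$ --- and then bound the number $s$ of Steiner nodes by $t-2$ via an out-degree count. The two computations are algebraically equivalent, but your route yields the intermediate facts $a=t+s-1$ and $s\leq t-2$ explicitly (the paper only extracts the $t-2$ bound on Steiner nodes afterwards, in the observation following the lemma), and your equality-chain justification of acyclicity is tighter than the paper's one-line claim that \eqref{eq:cm_root_inflow} and \eqref{eq:cm_inflow} alone give acyclicity, which really does also need connectedness, exactly as your argument makes explicit. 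Both proofs use integrality in the same places (in-degrees in $\{0,1\}$ and out-degrees of Steiner nodes in $\{2,3,\dots\}$), so there is no gap on that front either.
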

\begin{proof}
	We know that $G_x$ is acyclic because of Constraint \eqref{eq:cm_inflow} and \ref{eq:cm_root_inflow} and we also know that $G_x$ is connected because of \Cref{lemma:cm_connected}, so $G_x$ is a tree. Since $|V_x| \leq n$, we have that $\sum_{i, j} x_{ij} \leq n-1$.
	Now, we only need to prove that $\sum_{i, j} x_{ij} \leq 2t-3.$ We have that
	\begin{align*}
		\sum_{i, j \in V, i \neq j} x_{ij} = \sum_j \sum_{i \neq j} x_{ij} &=
		\sum_{i \neq r} x_{ir} + \sum_{j \in T \setminus \{r\}} \sum_{i \neq j} x_{ij} + \sum_{j \in V \setminus T} \sum_{i \neq j} x_{ij} = \\
		& \leq  0 + (t-1) + \frac{1}{2} \sum_{j \in V \setminus T} \sum_{k \neq j} x_{jk},
	\end{align*}
	where the last inequality holds, because of Constraint \eqref{eq:cm_root_inflow}, Constraint \eqref{eq:cm_inflow}, and Constraint \eqref{eq:cm_in_out_flow}, respectively. Note that only the last one gives us inequality since the others hold with equality. We can now rewrite
	\begin{equation*}
		\sum_{j \in V \setminus T} \sum_{k \neq j} x_{jk} = \sum_{i, j} x_{ij} - \sum_{j \in T} \sum_{k \neq j} x_{jk}.
	\end{equation*}
	Combining this with the previous equation, we get that
	\begin{equation*}
		\sum_{i, j} x_{ij} \leq t - 1 + \frac{1}{2} \sum_{i, j} x_{ij} - \frac{1}{2} \sum_{j \in T} \sum_{k \neq j} x_{jk}. 
	\end{equation*}
	Rearranging the terms, we obtain
	\[
	\frac{1}{2}  \sum_{i, j} x_{ij} \leq t - 1 - \frac{1}{2} \sum_{j \in T} \sum_{k \neq j} x_{jk}
	\]
	and hence, multiplying by 2
	\begin{align*}
		\sum_{i, j} x_{ij} & \leq 2t -  2 -  \sum_{j \in T} \sum_{k \neq j} x_{jk} = \\
		& = 2t - 2 - \sum_{k \neq r} x_{rk} - \sum_{j \in T \setminus \{r\}} \sum_{k \neq j} x_{jk} \leq 2t - 2 - 1 - 0 = 2t -3,
	\end{align*}
	where the last inequality holds because $\sum_{k \neq r} x_{rk} \geq 1$ by taking $W = V \setminus \{r\}$ in Constraint \eqref{eq:cm_subt}, and because $x_{jk} \geq 0$, respectively.
\end{proof}

\begin{observation}
	Let $t \leq \frac{n}{2}  +1$ and so $\min (n-1, 2t-3) = 2t-3$. Then, if we consider the CM, our solution is a tree with at most $2t-3$ edges, so it has $2t-3+1 = 2t-2$ nodes, $t$ of which are terminals, leaving us with $t-2$ Steiner vertices. Thus, it suffices to write Constraints \eqref{eq:cm_subt} only for 
	\begin{equation}
		W = W_1 \sqcup W_2, \quad W_1 \subset T \setminus {r}, \; |W_1| \geq 1, \quad W_2 \subset V \setminus T, \; |W_2| \leq t-2,
	\end{equation}
	instead of writing it for any $W = W_1 \sqcup W_2$, $W_2 \subset V \setminus T$. For instance, in the case $(n, t) = (20, 5)$ we go from $(2^4 -1)\times 2^{15} = 491520$ possible choices of $W$ to just $(2^4 -1)\times \sum_{i=0}^3 \binom{15}{i} = 8640$, which is around $1.8\%$ of the total.
\end{observation}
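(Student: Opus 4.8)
The goal is to argue that restricting the cut family \eqref{eq:cm_subt} to sets $W=W_1\sqcup W_2$ with $|W_2|\le t-2$ does not change the set $S_{CM}$ of integer feasible points; write $S'_{CM}$ for the integer points of the reduced system. Since we are only deleting inequalities, $S_{CM}\subseteq S'_{CM}$ is free, and the entire content is the reverse inclusion $S'_{CM}\subseteq S_{CM}$. My plan is to first record why the deleted inequalities are redundant on genuine solutions, and then to confront the real difficulty, namely excluding the spurious integer points admitted by the weaker system.

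For the first part I would invoke \Cref{lemma:max_edges_cm}: the hypothesis $t\le n/2+1$ gives $\min(n-1,2t-3)=2t-3$, so every $x\in S_{CM}$ is a tree with at most $2t-3$ edges, hence at most $2t-2$ nodes, of which $t$ are terminals and therefore at most $t-2$ are Steiner nodes. For such an $x$ every arc meeting a Steiner node outside the support $V_x$ carries weight $0$, so for an arbitrary $W=W_1\sqcup W_2$ one has $x(\delta^-(W))=x(\delta^-(W_1\sqcup(W_2\cap V_x)))$, and $W_2\cap V_x$ contains at most $t-2$ Steiner nodes. Thus on actual Steiner trees every full cut equals a restricted cut, which is the precise sense in which the large-$W_2$ inequalities are redundant.

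The substantive step, and the one I expect to be the main obstacle, is the reverse inclusion: I must show that an integer $x$ satisfying \eqref{eq:cm_root_inflow}--\eqref{eq:cm_in_out_flow} together with only the restricted cuts cannot violate a full cut. Mirroring the proofs of \Cref{lemma:cm_connected} and \Cref{lemma:max_edges_cm}, I would let $R$ be the set of nodes reachable from $r$ in $G_x$ and suppose some terminal $v\notin R$. Constraints \eqref{eq:cm_inflow} and \eqref{eq:cm_root_inflow} force in-degree at most one away from $r$, so the $r$-free part of $G_x$ splits into out-arborescences, and by \eqref{eq:cm_in_out_flow} every internal Steiner node of such an arborescence has out-degree at least two and hence sprouts an off-path branch that must terminate at a distinct terminal. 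Tracing the root-to-$v$ path in the arborescence containing $v$ produces an ancestor-closed set $W$ with $x(\delta^-(W))=0$, i.e.\ a violated cut, whose Steiner nodes I would bound by counting the terminals the component consumes.

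Here lies the delicate point. The clean bound holds when the arborescence is rooted at $r$ (a terminal): each internal Steiner node on the path costs one extra terminal, $v$ and $r$ account for two more, and the $t$ terminals then force at most $t-2$ Steiner nodes on the path, placing $W$ in the restricted family and yielding the desired contradiction. But a spurious component may instead be rooted at a Steiner \emph{source} (in-degree $0$, and hence out-degree only $\ge 1$ by \eqref{eq:cm_in_out_flow}), which contributes no branch and so saves one terminal, pushing the count to $t-1$ and putting the minimal violated cut just outside the restricted family. Closing this gap is the crux: I would either sharpen the accounting using that $r$ is a terminal lying outside every $r$-free component, or argue that such a Steiner source can always be absorbed so as to exhibit a violated cut with at most $t-2$ Steiner nodes. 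Once the $\le t-2$ bound is secured the reverse inclusion follows, and the constraint counts quoted for $(n,t)=(20,5)$ reduce to a routine binomial evaluation.
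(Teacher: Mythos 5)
The paper offers no proof of this observation beyond the counting argument you reproduce in your second paragraph: a genuine integer solution is a tree with at most $t-2$ Steiner nodes, so on such a solution every cut of the full family takes the same value as the restricted cut obtained by deleting the off-support Steiner nodes from $W_2$. That is the entire content of the paper's justification, and your first half matches it. Where you diverge is in trying to establish the reverse inclusion $S'_{CM}\subseteq S_{CM}$, and the obstruction you flag as ``the crux'' --- a spurious component rooted at a Steiner node of in-degree $0$, which Constraint \eqref{eq:cm_in_out_flow} leaves unconstrained and which therefore saves one terminal in your accounting --- is not a gap that can be closed: it is a genuine counterexample to the reverse inclusion. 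Take $n=6$, $t=3$, $T=\{1,2,3\}$, $r=1$ (so $t\le \frac{n}{2}+1$ and $t-2=1$), and the integer point with $x_{45}=x_{52}=x_{53}=1$ and all other entries $0$. Constraints \eqref{eq:cm_root_inflow}--\eqref{eq:cm_in_out_flow} hold (node $4$ has in-degree $0$, node $5$ has in-degree $1$ and out-degree $2$), and every restricted cut, having $|W_2|\le 1$, either omits $5$ --- in which case the arc $(5,2)$ or $(5,3)$ enters it, since $W_1\neq\emptyset$ --- or contains $5$ and omits $4$, in which case $(4,5)$ enters it; yet $W=\{2,3,4,5\}$ gives $x(\delta^-(W))=0$. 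For the metric cost $c_{1j}=10$, $c_{ij}=1$ otherwise, this point costs $3$ while every tree spanning $T$ costs at least $11$, so the reduction even changes the optimal value of the integer program.

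The conclusion is that the observation cannot be read the way your opening sentence frames it, as an assertion that the reduced and full systems have the same integer feasible set; it only asserts that the discarded inequalities are never violated by an actual Steiner tree with at most $t-2$ Steiner nodes, which is the forward direction you dispose of correctly and is how the paper uses the reduction when certifying candidate solutions. Your instinct to isolate the reverse inclusion as the substantive claim, and your diagnosis of exactly where it breaks, are both sound --- the honest resolution is that the claim is false rather than merely delicate. Two smaller structural points: a component in which every node has in-degree exactly $1$ is not an out-arborescence but a directed cycle with out-trees attached, so your decomposition of the $r$-free part of $G_x$ needs that extra case (which yields further counterexamples of the same kind); and note that $P_{CM}$ as defined in \eqref{eq:pcm} does not include the constraint $x_{ij}+x_{ji}\le 1$, so $2$-cycles are not excluded a priori either.
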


\begin{observation}
	Note that the bound proved in \Cref{lemma:max_edges_cm} is tight, that is, if we have $t \leq \frac{n}{2} +1$, there exists $x \in S_{CM}(n,t)$ such that $\sum_{i,j} x_{ij} = 2t -3$. If we have $T = [t], \, V = [n], \, \{r\} = \{1\}$, $x$ is as follows:
		\begin{align*}
			x_{1,t+1} = x_{2t-2, t} &= 1, &&\\
			x_{t+i, t+i+1} &= 1,  && i = 1, \dots, t-3, \\
			x_{t+i, i+1} &= 1, &&i = 1, \dots, t-2,
		\end{align*}
		with a graphical representation being
		\begin{center}
			\scalebox{0.5}{
				\begin{tikzpicture}
					\draw[line width=3, color=black] (0,0) -- (2, 0) node [currarrow, pos=0.6] {};
					\draw[line width=3, color=black] (2,0) -- (4, 0) node [currarrow, pos=0.6] {};
					\draw[line width=3, color=black] (4,0) -- (6, 0) node [currarrow, pos=0.6] {};
					\draw[line width=3, color=black] (6,0) -- (8, 0) node [currarrow, pos=0.4] {};
					\draw[line width=3, color=black] (8,0) -- (10, 0) node [currarrow, pos=0.6] {};
					\draw[line width=3, color=black] (10,0) -- (12, 0) node [currarrow, pos=0.6] {};
					\draw[line width=3, color=black] (2,0) -- (2, -2) node [currarrow, pos=0.6, sloped] {};
					\draw[line width=3, color=black] (4,0) -- (4, -2) node [currarrow, pos=0.6, sloped] {};
					\draw[line width=3, color=black] (6,0) -- (6, -2) node [currarrow, pos=0.6, sloped] {};
					\draw[line width=3, color=black] (10,0) -- (10, -2) node [currarrow, pos=0.6, sloped] {};
					\fill[color=mycol1] (0,0) circle (0.2);
					\fill[color=white] (0,0) circle (0.08);
					\fill[color=mycol2] (1.8, -0.2) rectangle (2.2, 0.2);
					\fill[color=mycol2] (3.8, -0.2) rectangle (4.2, 0.2);
					\fill[color=mycol2] (5.8, -0.2) rectangle (6.2, 0.2);
					\fill[color=mycol2] (9.8, -0.2) rectangle (10.2, 0.2);
					\fill[color=mycol3] (12,0) circle (0.2);
					\fill[color=mycol3] (2,-2) circle (0.2);
					\fill[color=mycol3] (4,-2) circle (0.2);
					\fill[color=mycol3] (6,-2) circle (0.2);
					\fill[color=white] (8,0) circle (0.5);
					\fill[color=black] (8,0) circle (0.06);
					\fill[color=black] (7.8,0) circle (0.06);
					\fill[color=black] (8.2,0) circle (0.06);
					\fill[color=mycol3] (10,-2) circle (0.2);
				\end{tikzpicture}
			}
		\end{center}
		where $t-2$ Steiner nodes belongs to $x$. The outflow of every terminal that is not the root is $0$, the outflow of the root is $1$, while the outflow of every Steiner node is $2$ and so $\sum_{i,j} x_{ij} = 1+ 2 \times (t-2) = 2t-3$.
\end{observation}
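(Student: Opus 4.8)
The plan is to prove tightness by exhibiting a single explicit integer point $x \in S_{CM}(n,t)$ whose support uses exactly $2t-3$ arcs; since \Cref{lemma:max_edges_cm} already gives $\sum_{i,j} x_{ij} \le 2t-3$ in this regime, producing one such $x$ settles the claim. First I would record why the hypothesis $t \le \frac{n}{2}+1$ is precisely what is needed. It is equivalent to $2t-2 \le n$, i.e.\ $t-2 \le n-t$, which says both that $\min(n-1,2t-3)=2t-3$ (so $2t-3$ is the operative bound) and that the $t-2$ Steiner nodes required to build a $(2t-2)$-vertex tree are actually available among the $n-t$ non-terminals. Thus the hypothesis is exactly the feasibility condition for the construction to fit inside the node set.

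The natural candidate is a directed \emph{caterpillar}: a directed path of Steiner nodes issuing from the root, where each Steiner node additionally carries one pendant terminal, so that every Steiner node has indegree $1$ and outdegree $2$. Concretely, using the $t-2$ Steiner nodes $t+1,\dots,2t-2$, I would send one arc out of the root into the first Steiner node, chain the Steiner nodes in a path, attach terminal $i+1$ to Steiner node $t+i$, and let the last Steiner node feed the two remaining terminals. Reading off the arcs, the count is $1 + (t-3) + (t-2) + 1 = 2t-3$, which matches the bound and reduces the problem to checking feasibility.

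It then remains to verify $x \in S_{CM}(n,t)$. The local constraints are immediate from the degrees built into the construction: the root has indegree $0$, giving \eqref{eq:cm_root_inflow}; every node has indegree at most $1$, giving \eqref{eq:cm_inflow}; and every Steiner node satisfies \eqref{eq:cm_in_out_flow} with equality, since $2\cdot 1 = 2$ equals its outdegree. The one step requiring a genuine (though mild) argument is the family of cut constraints \eqref{eq:cm_subt}, and I expect this to be the main obstacle, because it is the only place where one must reason globally about the support rather than node-by-node. Here I would observe that the caterpillar is visibly connected and acyclic, and that with the root at indegree $0$ and all other nodes at indegree $1$ it is an arborescence rooted at $r$; hence every terminal is reachable from $r$ along directed arcs. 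For any $W \subset V\setminus\{r\}$ with $W \cap T \neq \emptyset$, I would pick a terminal $v \in W$ and follow the directed $r$–$v$ path: it begins outside $W$ (at $r$) and ends inside $W$, so at least one arc enters $W$, yielding $x(\delta^-(W)) \ge 1$. Packaging the reachability argument as ``the support is an $r$-arborescence spanning all terminals'' makes \eqref{eq:cm_subt} transparent and completes the verification.
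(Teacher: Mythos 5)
Your proposal is correct and uses essentially the same construction as the paper: the directed caterpillar with a chain of $t-2$ Steiner nodes, each of indegree $1$ and outdegree $2$, giving the arc count $1+2(t-2)=2t-3$. The paper merely exhibits the point and counts outflows, whereas you additionally spell out the feasibility check (in particular the cut constraints via the $r$-arborescence reachability argument), which the paper leaves implicit; this is a welcome but not substantively different elaboration.
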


After discussing the properties that make the CM formulation interesting by itself, we now focus on commenting on its advantages in deducing information on the lower bounds of the BCR.

First, we discuss why studying the complete metric case is not restrictive. In particular, we use the \emph{metric closure} of a graph, defined below. 
\begin{definition}[Metric closure of a graph]
	Let $G = (V, E)$ be an  connected  edge-weighted graph. We define the \emph{metric closure} of $G$ the complete metric graph $\bar{G} = (V, \bar{E})$ such that the weight of the edge $\{i,j\}$ in $\bar{G}$ is equal to the value of the shortest paths from $i$ to $j$ in $G$. 
\end{definition}
We now link the integrality gap of the BCR formulation of a graph to the corresponding integrality gap of its metric closure.
\begin{lemma}\label{lemma:metric-closure-gap}
	Let $G = (V, E)$, $T \subset V$ be a Steiner instance, and let $\bar{G}$ be the Steiner instance corresponding to the metric closure of $G$. Then we have that
	\begin{equation}
		\text{STP}(G) = \text{STP}(\bar{G}), \quad \text{opt}_{BCR}(G) = \text{opt}_{BCR}(\bar{G}).
	\end{equation}
\end{lemma}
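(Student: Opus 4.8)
The plan is to establish each of the two claimed equalities by proving the two opposing inequalities separately, writing $\bar{c}$ for the shortest-path (metric-closure) costs and recalling that $\bar{c}_{ij}\le c_{ij}$ for every $\{i,j\}\in E$, since the direct edge is itself one feasible $i$--$j$ path in $G$.

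For the identity $\text{STP}(G)=\text{STP}(\bar{G})$ I would argue as follows. First, any tree in $G$ spanning $T$ is also a subgraph of the complete graph $\bar{G}$, and its $\bar{c}$-cost is at most its $c$-cost because $\bar{c}\le c$ edgewise; hence $\text{STP}(\bar{G})\le\text{STP}(G)$. Conversely, take an optimal Steiner tree $\bar{T}$ in $\bar{G}$ and replace each of its edges $\{i,j\}$ by a shortest $i$--$j$ path in $G$, whose total length is exactly $\bar{c}_{ij}$. The union of these paths is a connected subgraph of $G$ that still spans $T$ and has $c$-cost at most $\sum_{\{i,j\}\in\bar{T}}\bar{c}_{ij}=\text{STP}(\bar{G})$; extracting any spanning tree of the component containing the terminals yields a Steiner tree in $G$ of no greater cost, so $\text{STP}(G)\le\text{STP}(\bar{G})$.

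For the relaxation, the inequality $\text{opt}_{BCR}(\bar{G})\le\text{opt}_{BCR}(G)$ is the routine direction: given an optimal $x$ for $P_{BCR}(G)$, extend it to $\bar{G}$ by assigning value $0$ to every arc of $\bar{G}$ absent from $G$. The degree constraints \eqref{eq:dcut_degree} on the new edges read $0\le 1$, the old ones are inherited, and for every admissible $W$ the inflow is unchanged since the added arcs carry no flow, so \eqref{eq:dcut_subt} still holds; the objective can only decrease because $\bar{c}\le c$. The reverse inequality $\text{opt}_{BCR}(G)\le\text{opt}_{BCR}(\bar{G})$ is the substantive one. I would take an optimal $\bar{x}$ for $P_{BCR}(\bar{G})$ and \emph{route} it through $G$: for each ordered pair $(i,j)$ with $\bar{x}_{ij}>0$ fix a shortest $i$--$j$ path $P_{ij}$ and add $\bar{x}_{ij}$ to the value of every arc of $P_{ij}$, obtaining a vector $x$ supported on the arcs of $G$. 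By construction its cost equals $\sum_{(i,j)}\bar{x}_{ij}\,\bar{c}_{ij}=\text{opt}_{BCR}(\bar{G})$, and the cut constraints are preserved: for any admissible $W$, each arc $(i,j)\in\delta^{-}(W)$ with $\bar{x}_{ij}>0$ contributes a path from outside $W$ to inside $W$, which must traverse at least one arc of $\delta^{-}(W)$, so $x(\delta^{-}(W))\ge\bar{x}(\delta^{-}(W))\ge 1$.

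The main obstacle is that the routed vector $x$ need not satisfy the degree constraints \eqref{eq:dcut_degree}: several pairs may route through the same physical edge, possibly in both orientations, so that $x_{uv}+x_{vu}>1$. This cannot be repaired by naively cancelling opposite flow on an edge, because \eqref{eq:dcut_subt} bounds the \emph{gross} inflow of each set and such cancellation can drop a tight cut below $1$. The way I would resolve it is to show that the degree constraints are inactive at optimality for positive costs: in the rooted cut relaxation one can argue there is an optimal solution of the pure cut system (without \eqref{eq:dcut_degree}) in which no edge carries flow in both directions and no arc exceeds value $1$ — intuitively because flow emanates outward from the single root $r$, so reversing or doubling an edge only wastes cost. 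Granting this, the routed $x$ may be taken to meet \eqref{eq:dcut_degree} as well, giving a genuine point of $P_{BCR}(G)$ of cost $\text{opt}_{BCR}(\bar{G})$ and hence $\text{opt}_{BCR}(G)\le\text{opt}_{BCR}(\bar{G})$. Establishing this redundancy of \eqref{eq:dcut_degree} cleanly — or, alternatively, exhibiting a direct rerouting that eliminates two-cycles while preserving every gross cut value — is the crux of the argument.
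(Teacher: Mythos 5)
Your treatment of $\text{STP}(G)=\text{STP}(\bar{G})$ and of the easy direction $\text{opt}_{BCR}(\bar{G})\le\text{opt}_{BCR}(G)$ coincides with the paper's argument and is fine; your verification that the routed vector preserves the cut constraints \eqref{eq:dcut_subt} (each arc of $\delta^-(W)$ in $\bar{G}$ routes along a path that must cross $\delta^-(W)$ in $G$, and the contributions only accumulate) is in fact more explicit than what the paper writes. The problem is the step you yourself label the crux: you reduce the inequality $\text{opt}_{BCR}(G)\le\text{opt}_{BCR}(\bar{G})$ to the assertion that the degree constraints \eqref{eq:dcut_degree} are redundant at optimality for the pure cut system, and then you do not prove that assertion. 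As written, the argument is therefore incomplete: "one can argue there is an optimal solution \dots in which no edge carries flow in both directions and no arc exceeds value $1$" is exactly the content that needs a proof (it is plausible, and an uncrossing argument on tight cuts $W_1\ni v,\,W_1\not\ni u$ and $W_2\ni u,\,W_2\not\ni v$ can be made to work, but none of this is on the page).

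The paper closes this differently and more cheaply: instead of arguing redundancy of \eqref{eq:dcut_degree}, it truncates the accumulated value on an overloaded edge to $\min(1,\cdot)$ and keeps the routed vector otherwise. The observation that makes the truncation harmless for the cut constraints --- and which is the idea missing from your write-up --- is that a cut $W$ can only lose mass from the truncation if the truncated arc itself lies in $\delta^-(W)$, and that arc alone still contributes its capped value of $1$, so $x(\delta^-(W))\ge 1$ holds outright for every such $W$. This immediately disposes of the per-arc upper bound without any claim about optimal solutions of the relaxed system. (For the pairwise bound $x_{ij}+x_{ji}\le 1$ the paper's justification is also terse, so if you want a fully rigorous proof you should either carry out the uncrossing argument you allude to or make the truncation-and-reorientation step precise; but the specific route you propose cannot be left as an unproven appeal to "inactivity at optimality.")
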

\begin{proof}
	Let $x$ be an integer feasible solution for $G$ for the BCR formulation. Then, it is also a feasible solution for $\bar{G}$, and because of the definition of metric closure, it is a feasible solution with a smaller cost. We have then that $\text{STP}(\bar{G}) \leq \text{STP}(G)$. Let now $\bar{x}$ be a feasible solution for $\bar{G}$ for the BCR formulation. Reasoning in a non-oriented way, if we take every edge of $\bar{x}$ and substitute it with the corresponding shortest path in $G$, we obtain a subgraph of $G$ that can be oriented as a feasible solution $x$ of $G$, with a smaller cost. The cost is (non-strictly) smaller because we may take the same edge in different shortest paths. We then have that $\text{STP}(\bar{G}) \geq \text{STP}(G)$ and so $\text{STP}(\bar{G}) = \text{STP}(G)$.
	
	For the same reasoning, we have that $\text{opt}_{BCR}(G) = \text{opt}_{BCR}(\bar{G})$, with the exception that, when substituting an edge of $\bar{G}$ with the corresponding shortest path in $G$, since we are dealing with fractional solutions, if we have to take the same edge multiple times because it appears in multiple shortest paths, we have to take the minimum between $1$ and the sum of all the values with which that edge appears. This choice preserves feasibility and does not produce a solution with a larger cost.
\end{proof}
From this lemma, it follows that the integrality gap calculated with respect to the CM formulation only on metric graphs is a lower bound for the integrality gap of the BCR formulation across all graphs.

\subsection{The gap problem for the CM formulation}
With this in mind, one can proceed as in \Cref{sec:ig_dcut} and define a Gap problem for the CM formulation. 
Given $\overline{x}$ vertex of $P_{CM}(n,t)$, we define its gap as the linear problem of finding the cost vector that maximizes the integrality gap of a vertex $\overline{x}$, among those for which $\overline{x}$ is optimal. 

Given a vertex $x \in P_{CM}(n,t)$, we introduce variables $c_{ij}, \; \{i, j\} \in E$ that satisfy the triangle inequality and non-negativity constraints. Adding them to the slackness compatibility conditions we obtain the following linear program with exponentially many variables and constraints:
\begin{subequations} \label{eq:gap-cm}
	\begin{align}
		\min \quad  & \sum_{\{i, j\} \in E} c_e (\overline{x}_{ij} + \overline{x}_{ji})  \\
		\mbox{s.t.} \quad & c_{ij} \leq c_{ik} + c_{jk} &  \forall \{i, j\}, \{i, k\}, \{j, k\}   \in E,\\
		& y_{ij} + v_j + \sum_{w \in W(ij)} z_w  - c_{ij} = 0, & \mathllap{\forall i \in T, j \in T \setminus \{r\}, \; \bar{x}_{ij} > 0,}\\ 
		& y_{ij} + v_j + 2u_j + \sum_{w \in W(i,j)} z_w - c_{ij} = 0, & \mathllap{\forall i \in T, \; j \in V \setminus T, \; \bar{x}_{ij} > 0,}\\
		& y_{ji} + v_i - u_j + \sum_{w \in W(j,i)} z_w - c_{ji} = 0,  & \mathllap{\forall i \in  T \setminus \{r\}, j\in V \setminus T, \; \bar{x}_{ij} > 0,}\\
		& y_{ij} + v_j +2 u_j - u_i + \sum_{w \in W(i,j)} z_w - c_{ij} = 0,  & \forall i, j \in V \setminus T, \; \bar{x}_{ij} > 0,\\
		& z_W = 0, & \mathllap{\forall  W \subset V \setminus \{r\}, \;W \cap T \neq \emptyset \; \bar{x}\left(\delta^{-}(W)\right) > 1,}\\
		& v_j = 0, & \mathllap{\forall j \in  V \setminus \{ r\}, \; \bar{x}\left(\delta^{-}(j)\right) < 1,}\\
		& u_j = 0, &\mathllap{\forall j \in V \setminus T, \; 2\bar{x}\left(\delta^{-}(v)\right) < \bar{x}\left(\delta^{+}(v)\right),}\\
		& y_{ij} = 0, & \forall (i, j) \in A, \; \bar{x}_{ij} < 1,\\
		& c_{ij} \geq  0, & \forall \{i, j\}   \in E,\\
		& \sum_{\{i, j\} \in E} c_e (\bar{z}_{ij} + \bar{z}_{ji}) \geq 1, & \forall \bar{z} \in S_{CM}(n,t), \\
		& y_e \leq 0, &  \forall e=\{i, j\} \in E, \\
		& v_i, u_j \leq 0, &  \forall i \in V \setminus{\{r\}}, j \in V \setminus T, \\
		&  z_W \geq 0, &  \mathllap{\forall W \subset V \setminus \{r\}, \,  W \cap T \neq \emptyset.}
	\end{align}
\end{subequations}

\section{Properties and partial enumeration of nontrivial vertices}
\label{sec:vertex_enum}
In this section, we present the theoretical results and algorithms used to enumerate vertices of the polytope $P_{CM}(n,t)$. 
We first introduce results linking polytopes of different dimensions, and then, relying upon these and other structural results, we present two different algorithms for vertices enumeration.

\subsection{Avoiding redundancy}
First, let us define a particular class of vertices that will be of interest for our results.

\begin{definition}[Spanning vertex]\label{def:spanning-vert}
	Let $x$ be a vertex of $P_{CM}(n,t)$. We will call $x$ a \emph{spanning vertex} if all of the nodes are part of the solution $x$, that is, $x(\delta^-(i)) + x(\delta^+(i)) > 0$ for all $i \in V$.
\end{definition}
Note that \Cref{lemma:cm_connected} implies that every spanning vertex is connected.
In an STP, Steiner nodes may or may not be part of an optimal solution. This holds for vertices of $P_{CM}(n,t)$, both integer and non-integer, that is, not all of the vertices are spanning vertices. 
Hence, we can consider whether a non-spanning vertex of $P_{CM}(n,t)$ can be seen as a spanning vertex of a polytope of a smaller dimension, and vice versa, that is, if a spanning vertex of $P_{CM}(n,t)$ can be seen as a vertex of a polytope of a larger dimension.
The following results link vertices of $P_{CM}(n+1,t)$ with vertices of $P_{CM}(n,t)$ and vice versa. These results will be used in the enumeration of vertices to reduce the dimension of our search space by avoiding redundancy.

\begin{lemma}\label{lemma:adding-zeros}
	Let $x \in \mathbb{R}^{(n-1) \times n}$. Define $y \in \mathbb{R}^{n \times (n+1)}$ as
	\begin{equation}\label{eq:add-zeros}
		y_{ij} =
		\begin{cases}
			x_{ij},  &  1 \leq  i, j < n+1,\\
			0, & \text{otherwise}. 
		\end{cases}
	\end{equation}
	Then, $x \in P_{CM}(n,t)$ if and only if $y \in P_{CM}(n+1,t)$.
\end{lemma}

\begin{proof}
	Let $x \in P_{CM}(n,t)$. Note that $y \in [0,1]^{n \times (n+1)}$ since $x \in P_{CM}(n,t) \subset [0,1]^{(n-1) \times n}$. Regarding Constraint \eqref{eq:cm_subt}, we distinguish two cases. Let $W$ be a set as described in \eqref{eq:cm_subt} for $y$. ($a$) If $n+1 \in W$, going from $x$ to $y$ adds the variables $x_{i,n+1}$ which are all zero so since $x$ satisfies the constraint $y$ satisfies it too. ($b$) If $n+1 \notin W$, going from $x$ to $y$ adds the variables $x_{n+1,j}$ which are all zero so since $x$ satisfies the constraint $y$ satisfies it too. Constraints \eqref{eq:cm_root_inflow}--\eqref{eq:cm_inflow} are satisfied by $y$ since $x$ satisfies them, and we are only adding variables that take the value zero. Regarding Constraint \eqref{eq:cm_in_out_flow}, if $j = n+1$, the constraint holds trivially since all the variables are zero. If $j \neq n+1$, going from $x$ to $y$ adds the variables $x_{i,n+1}, x_{n+1,j}$ which are all zero, so since $x$ satisfies the constraint, $y$ also satisfies it.
	
	Let $y \in P_{CM}(n+1,t)$ of the form \eqref{eq:add-zeros}. Note that $x \in [0,1]^{n \times (n-1)}$ since $y \in P_{CM}(n+1,t) \subset [0,1]^{(n+1) \times n}$. Let $W$ be a set as described in \eqref{eq:cm_subt} for $x$. Let $\hat{W} \coloneqq W \cup \{n+1\}$. $\hat{W}$ is a set for which $y$ satisfies the correspondent constraint \eqref{eq:cm_subt}. In the $\hat{W}$ constraint, the only variables that appear are the ones appearing in the $W$ constraint plus the variables $x_{i,n+1}$ which are all zero. Since the $\hat{W}$ constraint is satisfied by $y$, the $W$ constraint is satisfied by $x$. Constraints \eqref{eq:cm_root_inflow}--\eqref{eq:cm_inflow} are clearly satisfied by $x$ since $y$ satisfies them. Regarding Constraint \eqref{eq:cm_in_out_flow}, passing from $y$ to $x$ removes the variables $x_{i,n+1}, x_{n+1, j}$ which are all zero, so since $y$ satisfies the constraint, $x$ also satisfies it.
\end{proof}

The following lemmas show how to identify vertices of $P_{CM}(n+1,t)$ with the ones of $P_{CM}(n,t)$ and vice versa.
\begin{lemma}\label{lemma:vertex-iso1}
	Let $x$ be a vertex of $P_{CM}(n,t)$. Then
	\begin{equation}
		y_{ij} =
		\begin{cases}
			x_{ij},  & \text{if } i, j \neq n+1,\\
			0, & \text{otherwise} 
		\end{cases}
	\end{equation}
	is a vertex of $P_{CM}(n+1,t)$.
\end{lemma}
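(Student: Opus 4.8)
The plan is to combine the extreme-point characterization of vertices with \Cref{lemma:adding-zeros}, which already certifies that $y \in P_{CM}(n+1,t)$ whenever $x \in P_{CM}(n,t)$; hence feasibility is free and only extremality must be argued. Recall that a feasible point of a polytope is a vertex exactly when it admits no nontrivial convex decomposition into two distinct feasible points. Accordingly, I would suppose for contradiction that $y = \tfrac{1}{2}(y^1 + y^2)$ with $y^1, y^2 \in P_{CM}(n+1,t)$ and $y^1 \neq y^2$, and then descend this decomposition to a nontrivial convex decomposition of $x$, contradicting the hypothesis that $x$ is a vertex of $P_{CM}(n,t)$.

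The key observation is that every arc variable incident to the new node $n+1$, namely $y_{i,n+1}$ and $y_{n+1,j}$ for $i,j \in [n]$, vanishes in $y$. Since $P_{CM}(n+1,t) \subseteq [0,1]^{n\times(n+1)}$, both $y^1$ and $y^2$ are nonnegative, and a convex combination of two nonnegative numbers is zero only if both summands are zero; hence $y^1_{i,n+1} = y^2_{i,n+1} = 0$ and $y^1_{n+1,j} = y^2_{n+1,j} = 0$ for all $i,j$. In other words, both $y^1$ and $y^2$ are themselves of the lifted form \eqref{eq:add-zeros}.

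It then remains to restrict $y^1$ and $y^2$ to the original coordinates, obtaining $x^1, x^2 \in \mathbb{R}^{(n-1)\times n}$. Applying \Cref{lemma:adding-zeros} in the direction $y \in P_{CM}(n+1,t) \Rightarrow x \in P_{CM}(n,t)$ to each of them yields $x^1, x^2 \in P_{CM}(n,t)$. Because $y^1$ and $y^2$ agree (both being zero) on every new coordinate yet are distinct, they must differ on some original coordinate, so $x^1 \neq x^2$; and clearly $x = \tfrac{1}{2}(x^1 + x^2)$. This exhibits $x$ as a nontrivial convex combination of two distinct points of $P_{CM}(n,t)$, the desired contradiction. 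Hence no such decomposition of $y$ exists, and $y$ is a vertex.

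I do not anticipate a serious obstacle: the argument is largely bookkeeping, and the only point demanding care is invoking the correct implication of \Cref{lemma:adding-zeros}, so that the truncations $x^1, x^2$ are certified feasible for $P_{CM}(n,t)$ rather than treated as arbitrary restrictions. An equivalent route would use the active-constraint (rank) criterion: at $y$ the $2n$ lower-bound constraints on the new coordinates are active, mutually independent, and involve only the new variables, while a full-rank family of $n(n-1)$ active constraints at the vertex $x$ lifts to active constraints at $y$ supported on the original coordinates; together these furnish $n(n+1)$ independent active constraints, matching the ambient dimension. I would nonetheless favour the convex-combination version, since it sidesteps dimension counting and the need to track which constraint families contribute to the rank.
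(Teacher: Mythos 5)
Your proposal is correct and follows essentially the same route as the paper's own proof: feasibility via \Cref{lemma:adding-zeros}, then a contradiction argument in which nonnegativity forces the coordinates incident to node $n+1$ to vanish in both summands of any midpoint decomposition of $y$, so that projecting onto the original coordinates yields a nontrivial decomposition of $x$. The paper merely packages the projection step as an explicit map $\pi$ on the subpolytope $P_{CM}(n+1,t)_0$; the substance is identical.
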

\begin{proof}
	The idea of the proof is to show by contradiction that if $y$ is not a vertex, then $x$ cannot be a vertex as well. In detail, we have that $y \in P_{CM}(n+1,t)$ because of \Cref{lemma:adding-zeros}. Let $ P_{CM}(n+1,t)_0$ be the subpolytope of $P_{CM}(n+1,t)$ defined as
	\begin{equation*}
		P_{CM}(n+1,t)_0 \coloneqq \{z \in P_{CM}(n+1,t) \, \colon \, z_{i, n+1} = z_{n+1, j} = 0, \; 1 \leq i,j \leq n\}.
	\end{equation*}
	Let
	\begin{align*}
		\begin{aligned}
			\pi \colon P_{CM}(n+1,t)_0&\twoheadrightarrow P_{CM}(n,t)\\
			(z_{ij})_{i, j} &\mapsto (z_{ij})_{i, j \neq n+1}        
		\end{aligned}
	\end{align*}
	be the projection considering the first $n$ nodes. Note that $\pi  (y) = x$ and that $\pi$ is an injective map. Note also that $\text{Im}(\pi) \subset P_{CM}(n,t)$ because of \Cref{lemma:adding-zeros}.
	By contradiction, suppose that there exist $a, b \in P_{CM}(n+1,t)$ such that $a  \neq b$, $y = \frac{1}{2} a + \frac{1}{2} b$. Given $i, j \leq n$, we have that
	\begin{equation*}
		y_{i, n+1} = y_{n+1, j} = 0 = \frac{1}{2} (a_{i, n+1} + b_{i, n+1}) = \frac{1}{2} (a_{n+1, j} + b_{n+1, j}). 
	\end{equation*}
	Since $a, b \in P_{CM}(n+1,t)$, we have that $a_{i, n+1},b_{i, n+1},  a_{n+1, j},b_{n+1, j} \geq 0$ and so $a_{i, n+1},b_{i, n+1},  a_{n+1, j},b_{n+1, j} = 0$. Thus, $a, b \in P_{CM}(n+1,t)_0$ and we can define $c \coloneqq  \pi (a)$, and $d \coloneqq \pi (b)$,
	and we have that $c, d \in P_{CM}(n,t)$, $c \neq d$, $x = \frac{1}{2} c + \frac{1}{2} d$, a contradiction.
\end{proof}
\begin{lemma}\label{lemma:vertex-iso2}
	Let $t < n$ and let $y\in \mathbb{R}^{n(n-1)}$ be a vertex of $P_{CM}(n,t)$ of the form
	\begin{equation}
		y_{ij} =
		\begin{cases}
			x_{ij}, & \text{if } i \neq n \neq j,\\
			0,  & \text{else,} 
		\end{cases}
	\end{equation}
	where $x \in \mathbb{R}^{(n-2)(n-1)}$. Then $x$ is a vertex of $P_{CM}(n-1,t)$.
\end{lemma}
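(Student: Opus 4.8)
The plan is to mirror the argument of \Cref{lemma:vertex-iso1}, but running the zero-padding operation of \Cref{lemma:adding-zeros} in the opposite direction. First I would use the hypothesis $t < n$ to observe that node $n$ is a Steiner node (since $T = [t]$), so that deleting it leaves the terminal set unchanged and $P_{CM}(n-1,t)$ is well defined; this is precisely the regime in which \Cref{lemma:adding-zeros} applies with $n$ replaced by $n-1$. Writing $c \mapsto c'$ for the padding map that sets $c'_{ij} = c_{ij}$ for $i,j \leq n-1$ and $c'_{ij} = 0$ whenever $i = n$ or $j = n$, I note two elementary facts I will lean on throughout: this map is linear, and it is injective. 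By construction the vector $y$ in the statement is exactly $x'$, the padding of $x$.

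I would first establish feasibility. Since $y = x' \in P_{CM}(n,t)$ by assumption, \Cref{lemma:adding-zeros} (read from right to left, under the index shift $n \mapsto n-1$) immediately gives $x \in P_{CM}(n-1,t)$, so $x$ is at least a point of the target polytope.

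It then remains to show $x$ is an extreme point, and I would argue by contradiction. Suppose $x = \tfrac12 c + \tfrac12 d$ with $c, d \in P_{CM}(n-1,t)$ and $c \neq d$. Applying \Cref{lemma:adding-zeros} to each of $c$ and $d$ yields $c', d' \in P_{CM}(n,t)$. Because padding is linear and fixes the zero coordinates incident to node $n$, we get $y = x' = \bigl(\tfrac12 c + \tfrac12 d\bigr)' = \tfrac12 c' + \tfrac12 d'$; and because padding is injective and $c, d$ already differ on some coordinate with both endpoints in $\{1,\dots,n-1\}$ (a coordinate copied unchanged into $c', d'$), we have $c' \neq d'$. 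This exhibits $y$ as a proper convex combination of two distinct points of $P_{CM}(n,t)$, contradicting the hypothesis that $y$ is a vertex. Hence $x$ is a vertex of $P_{CM}(n-1,t)$.

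The argument is essentially routine once \Cref{lemma:adding-zeros} is available; I expect the only points requiring care to be purely bookkeeping, namely confirming that the padding map is injective and linear so that both distinctness ($c' \neq d'$) and the midpoint relation ($y = \tfrac12 c' + \tfrac12 d'$) transfer cleanly across the two dimensions, and that the hypothesis $t < n$ is genuinely needed to keep node $n$ a Steiner node so that the terminal set is preserved. There is no substantive obstacle beyond this.
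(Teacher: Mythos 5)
Your proof is correct and is essentially identical to the paper's: both establish feasibility of $x$ via the reverse direction of \Cref{lemma:adding-zeros} and then rule out a proper convex decomposition $x = \tfrac12 c + \tfrac12 d$ by padding $c$ and $d$ with zeros to obtain two distinct points of $P_{CM}(n,t)$ averaging to $y$, contradicting that $y$ is a vertex. Your explicit remarks on the linearity and injectivity of the padding map, and on $t<n$ keeping node $n$ a Steiner node, merely spell out details the paper leaves implicit.
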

\begin{proof} 
	Let
	\begin{align*}
		\begin{aligned}
			i \colon P_{CM}(n,t) &\hookrightarrow P_{CM}(n,t)\\   
			x& \mapsto y
		\end{aligned}
	\end{align*}
	where $x$ and $y$ as in the statement of the theorem. Note that $i$ is well defined because of \Cref{lemma:adding-zeros}, namely, if $y$ is feasible, then $x$ is feasible as well.
	It remains to prove that $x$ is also a vertex. By contradiction, suppose there exist $c, d \in P_{CM}(n-1,t)$ such that $c \neq d$, $x = \frac{1}{2} c + \frac{1}{2} d$. If we define $a \coloneqq i(c)$, and $b \coloneqq i(d)$, we have that $a, b \in P_{CM}(n,t)$, $a \neq b$, $y = \frac{1}{2} a + \frac{1}{2} b$, and so we have a contradiction. 
\end{proof}

Note that the result above still holds if we replace the node $n$ with any node $k \in V \setminus T$.

\begin{observation}\label{obs:small_dim_identific}
	Let $\pi$ and $i$ be the maps introduced in the proofs of \Cref{lemma:vertex-iso1} and \Cref{lemma:vertex-iso2}, respectively. Note that $\pi$ is an injective map and $i(P_{CM}(n,t)) \subset P_{CM}(n+1,t)_0$, thus we have that $\pi$ is also a surjective map and so it is bijective. Moreover, $\pi$ is linear and sends vertices in vertices. In particular $P_{CM}(n+1,t)_0 \cong P_{CM}(n,t)$,
	where the isomorphism is given by the map $\pi$. Note that $\pi$ is a surjective map because given an element $x \in P_{CM}(n,t)$, we have that $\pi (i (x)) = x$, and we can map $i(x)$ through $\pi$ because $i(P_{CM}(n,t)) \subset P_{CM}(n+1,t)_0$.
	This implies that, in the aim of evaluating vertices of our polytopes, it is sufficient to evaluate vertices of $P_{CM}(n,t)$ to get all of the vertices of $P_{CM}(m,t)$, for every $m = t, t+1, \dots, n$. Alternatively, we can evaluate only the spanning vertices of $P_{CM}(n,t)$ for every $n,t$, since every non-spanning vertex can be seen as a spanning vertex of a polytope of a smaller dimension, applying the lemmas above iteratively. Note that we are only interested in non-isomorphic vertices because isomorphic vertices have the same integrality gap. Note also that the results presented above hold for the BCR and the SJ formulations. The proof can be done in almost the same way.
\end{observation}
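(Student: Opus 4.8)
The plan is to make precise the claim that $\pi$ and $i$ are mutually inverse maps between $P_{CM}(n+1,t)_0$ and $P_{CM}(n,t)$, and then to read off the stated enumeration consequences. First I would record the two containments on which everything rests: $i\bigl(P_{CM}(n,t)\bigr)\subseteq P_{CM}(n+1,t)_0$ and $\pi\bigl(P_{CM}(n+1,t)_0\bigr)\subseteq P_{CM}(n,t)$. Both are immediate from \Cref{lemma:adding-zeros}, since $i$ appends a node all of whose incident variables vanish (the ``only if'' direction) and $\pi$ deletes precisely such a node (the ``if'' direction). A one-line coordinate check then gives $\pi\circ i=\mathrm{id}$ on $P_{CM}(n,t)$, because $i$ writes $x$ into the coordinates indexed by $\{1,\dots,n\}$ and $\pi$ forgets exactly the coordinates indexed by $n+1$.

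Injectivity of $\pi$ was already observed in the proof of \Cref{lemma:vertex-iso1}; combined with the identity $\pi\bigl(i(x)\bigr)=x$ for every $x\in P_{CM}(n,t)$ this shows $\pi$ is surjective, hence bijective with inverse $i$. Since $i$ and $\pi$ are restrictions of a coordinate inclusion and projection, they are linear, so $\pi$ is an affine isomorphism $P_{CM}(n+1,t)_0\cong P_{CM}(n,t)$. \Cref{lemma:vertex-iso1,lemma:vertex-iso2} already guarantee that $i$ and $\pi$ carry vertices to vertices, so this isomorphism identifies the two vertex sets; alternatively one notes that an affine bijection preserves the property of being an extreme point and concludes directly.

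For the enumeration consequences I would argue in two directions. Iterating $i$ a total of $n-m$ times embeds any vertex of $P_{CM}(m,t)$ as a vertex of $P_{CM}(n,t)$ supported on a fixed set of $m$ nodes, so a single enumeration of the vertices of $P_{CM}(n,t)$ yields, by restriction, all vertices of $P_{CM}(m,t)$ for every $t\le m\le n$. Conversely, given a vertex of $P_{CM}(n,t)$, each node outside its support lets us apply $\pi$ once to descend to a vertex of a polytope with one fewer node, and the process terminates at a spanning vertex because each step strictly decreases the node count. The only point needing care is that such an unused node is always a Steiner node, so that \Cref{lemma:vertex-iso2} (which removes a node of $V\setminus T$) applies: this holds because \eqref{eq:cm_subt} with $W=\{v\}$ forces $x\bigl(\delta^-(v)\bigr)\ge 1$ for each terminal $v\neq r$, while $W=V\setminus\{r\}$ forces $x\bigl(\delta^+(r)\bigr)\ge 1$, so no terminal can be absent from the support. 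Hence it suffices to enumerate the spanning vertices of $P_{CM}(m,t)$ over all $m\le n$, and since relabelling the nodes leaves the integrality gap unchanged we only ever enumerate up to isomorphism.

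The main obstacle is not any single hard estimate but keeping the bookkeeping consistent: one must verify that the constraint families \eqref{eq:cm_subt}--\eqref{eq:cm_in_out_flow} behave correctly under both the addition and the deletion of a flow-free node, which is exactly the content of \Cref{lemma:adding-zeros}, and that the iterative descent is well founded, which is clear from the strict decrease in the number of nodes. Finally I would note that the entire argument transfers verbatim to the BCR and SJ polytopes, since their defining inequalities are equally insensitive to a node carrying no flow; the analogue of \Cref{lemma:adding-zeros} holds for them, and with it the whole chain of identifications.
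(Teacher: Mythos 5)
Your proposal is correct and follows essentially the same route as the paper's own justification: the containments from \Cref{lemma:adding-zeros}, the identity $\pi\circ i=\mathrm{id}$, injectivity from the proof of \Cref{lemma:vertex-iso1}, and the vertex-preservation from \Cref{lemma:vertex-iso1,lemma:vertex-iso2}, followed by the iterative descent to spanning vertices. You additionally make explicit a detail the paper leaves implicit --- that any node absent from the support must be a Steiner node (since \eqref{eq:cm_subt} forces every terminal and the root to carry flow), so that the removal step of \Cref{lemma:vertex-iso2} always applies --- which is a worthwhile clarification but not a departure from the paper's argument.
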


\begin{observation}\label{obs:adding-ones}
	As we have seen, the trivial way to go from a vertex of $P_{CM}(n+1,t)$ to a vertex of $P_{CM}(n,t)$ is removing zeros, and the trivial way to go from a vertex of $P_{CM}(n,t)$ to a vertex of $P_{CM}(n+1,t)$ is adding zeros. As one would expect, the trivial way to go from a vertex of $P_{CM}(n+1,t+1)$ to a vertex of $P_{CM}(n,t)$ and vice versa is the ``dual'' procedure of the previous one, that is, adding or removing one $1$. Note that this can be done in different ways. More precisely, the following procedures start with a vertex of $P_{CM}(n,t)$ and return a vertex of $P_{CM}(n+1,t+1)$.
	\begin{itemize}
		\item[$(a)$] Add an edge of weight $1$ between a node $v$ of indegree $1$ and the new added terminal, see for example \Cref{fig:oddwheel2} $\to$ \Cref{fig:8-51-2} and \Cref{fig:oddwheel1} $\to$ \Cref{fig:8-51-1}.
		\item[$(b)$] Same as $(a)$, but substituting the outflow of $v$ with the outflow of the newly added terminal, see for example \Cref{fig:oddwheel1} $\to$ \Cref{fig:8-51-3}.
		\item[$(c)$] Add an edge of weight $1$ between the newly added terminal and the root, then swap the role of these two nodes, see \Cref{fig:oddwheel2} $\to$ \Cref{fig:8-51-4}.
	\end{itemize}
	Reversing these procedures, when possible, allows us to go from a $P_{CM}(n+1,t+1)$ to a vertex of $P_{CM}(n,t)$. The proofs are similar to the ones presented above. For all of the procedures above, it is clear that the generated vertices are not isomorphic to the ones we start from.
\end{observation}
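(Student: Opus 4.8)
The plan is to treat each of the three constructions $(a)$, $(b)$, $(c)$ by the same two-step scheme already used for \Cref{lemma:vertex-iso1} and \Cref{lemma:vertex-iso2}: first show that the constructed point $y$ is feasible for $P_{CM}(n+1,t+1)$, and then show that it is extreme by the convex-combination contradiction argument. Write $V' = [n+1]$ and $T' = [t+1]$, and let node $n+1$ be the newly added terminal (in $(c)$, after the swap, $n+1$ plays the role of the root and the old root $r$ becomes an ordinary terminal). The coordinates of $y$ split into an \emph{old} block $(y_{ij})_{i,j\le n}$, which equals $x$ up to the local modification at $v$, and a \emph{new} block touching node $n+1$, every entry of which is fixed to $0$ or $1$ by construction.

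For feasibility I would verify the defining constraints \eqref{eq:cm_subt}--\eqref{eq:cm_in_out_flow} one at a time, exactly as in the proof of \Cref{lemma:adding-zeros}. The inflow constraints \eqref{eq:cm_root_inflow}--\eqref{eq:cm_inflow} and the degree constraint \eqref{eq:cm_in_out_flow} are purely local: in $(a)$ the only change is that the outflow of $v$ gains the unit edge to $n+1$, which can only help \eqref{eq:cm_in_out_flow} when $v$ is Steiner and leaves it untouched when $v$ is a terminal, while $n+1$, being a terminal, is unconstrained by \eqref{eq:cm_in_out_flow}; cases $(b)$ and $(c)$ are analogous once one checks that redistributing (resp. swapping) preserves each nodal in/out balance. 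The genuinely delicate constraint is the cut constraint \eqref{eq:cm_subt} for sets $W \ni n+1$. Writing $W = W_0 \sqcup \{n+1\}$ with $W_0 \subseteq [n]$, the arcs entering $W$ are those entering $W_0$ together with the unit arc feeding $n+1$ whenever its tail lies outside $W$. When that tail ($v$ in $(a)$, $r$ in $(c)$) lies outside $W_0$ the constraint is immediate; the hard sub-case is when it lies inside $W_0$ while $W_0$ contains no original terminal, so that the original cut system does not apply directly. By max-flow/min-cut this whole case reduces to a single statement: the support of $x$ must carry an $r$--$v$ flow of value at least $1$ (since the only arc into $n+1$ has weight $1$, the $r$--$(n+1)$ max-flow equals the minimum of $1$ and the $r$--$v$ max-flow). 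This is automatic when $v$ is itself a terminal, by \eqref{eq:cm_subt}, and otherwise is precisely where the indegree-$1$ hypothesis on $v$, together with connectedness (\Cref{lemma:cm_connected}) and the amplification inequality \eqref{eq:cm_in_out_flow}, must be used. I expect this $r$--$v$ connectivity step to be the main obstacle of the proof, and it is also the reason the reverse maps hold only ``when possible''.

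For extremality I would mimic \Cref{lemma:vertex-iso1} verbatim. Suppose $y = \tfrac12 a + \tfrac12 b$ with $a,b \in P_{CM}(n+1,t+1)$ and $a \ne b$. Every new coordinate of $y$ is $0$ or $1$; since $a,b \in [0,1]^{m}$ average to $y$, they must coincide with $y$ on all these fixed coordinates. Hence $a$ and $b$ differ only on the old block, and restricting them to the first $n$ nodes (undoing the determined local modification at $v$) yields distinct $c \ne d$ in $P_{CM}(n,t)$ with $x = \tfrac12 c + \tfrac12 d$, contradicting that $x$ is a vertex. That the restrictions land in $P_{CM}(n,t)$ is exactly the ``only if'' direction of \Cref{lemma:adding-zeros} applied after stripping the unit arc, i.e. the analogue of the projection $\pi$ of \Cref{obs:small_dim_identific}.

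Finally, the reverse maps are read off each construction backwards: given a vertex $y$ of $P_{CM}(n+1,t+1)$ whose added terminal has the prescribed local shape, delete node $n+1$ and re-absorb its unit arc, then rerun the same two steps; feasibility transfers by the ``if'' direction of \Cref{lemma:adding-zeros} and extremality by the inclusion argument of \Cref{lemma:vertex-iso2}. Non-isomorphism is then immediate: each procedure increases the terminal count from $t$ to $t+1$ and leaves the added terminal with positive degree, so the image is a spanning vertex that cannot be a relabeling of any zero-extension of the source; in particular the two solutions have different numbers of terminals and hence cannot be isomorphic as Steiner solutions.
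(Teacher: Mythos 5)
Your overall plan --- feasibility checked constraint by constraint as in \Cref{lemma:adding-zeros}, extremality via the halving argument of \Cref{lemma:vertex-iso1,lemma:vertex-iso2} after noting that all coordinates touching the new node are forced to their $0/1$ values --- is exactly what the paper gestures at when it writes ``the proofs are similar to the ones presented above'' (no detailed proof is given), and your extremality and non-isomorphism paragraphs are sound. The gap is in the step you yourself flag as the main obstacle. You correctly isolate the cut constraints \eqref{eq:cm_subt} for sets $W = W_0 \sqcup \{n+1\}$ with $v \in W_0$ and $W_0 \cap T = \emptyset$, and correctly reduce them to the condition that every such $W_0$ receive total weight at least $1$ (equivalently, an $r$--$v$ max-flow of value at least $1$ in the support of $x$). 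But your proposed resolution --- that the indegree-$1$ hypothesis on $v$ together with \Cref{lemma:cm_connected} and \eqref{eq:cm_in_out_flow} supplies this --- cannot work. If ``indegree $1$'' is read as elsewhere in the paper (a single incoming arc of the support digraph), then for a pure half-integer vertex a Steiner node $v$ of indegree $1$ has inflow $\tfrac12$, so already $W_0=\{v\}$ gives $x(\delta^-(\{v,n+1\}))=\tfrac12<1$: the constructed point is infeasible, and no connectedness or amplification argument can repair a cut whose incoming weight is simply too small.

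What actually makes the constructions work --- and what every figure cited in the observation instantiates --- is that $v$ is a node of \emph{unit inflow}, indeed an original terminal (or the root, in case $(c)$). Then every $W_0\ni v$ with $r\notin W_0$ already intersects $T$, so the old constraint \eqref{eq:cm_subt} for $W_0$ applies verbatim and yields $x(\delta^-(W))\geq x(\delta^-(W_0))\geq 1$ because the new node has no outgoing arcs into $W_0$; the sub-case you call hard is then vacuous, and case $(c)$ is handled symmetrically since the unit arc from the new root covers every cut set containing the old root. So you should read the hypothesis on $v$ as ``$x(\delta^-(v))=1$ and $v\in T$'' rather than ``one incoming arc''; as written, your proof stalls exactly at the sub-case you defer. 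Two smaller points: the restriction of $a$ and $b$ to the first $n$ nodes in your extremality step is not literally an application of \Cref{lemma:adding-zeros}, since node $n+1$ carries a unit coordinate, and for construction $(a)$ one must also check that deleting the arc $v\to n+1$ does not violate \eqref{eq:cm_in_out_flow} at $v$ --- again automatic precisely because $v$ is a terminal.
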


Note that the above procedures do not change the integrality gap, as shown by the lemma below, that is actually not restricted to STP.

\begin{lemma}\label{lemma:add-1-IG}
	Let $\epsilon \geq 0$, $c \in \mathbb{R}_{\geq \epsilon}^n$ a cost vector of a minimization ILP instance, and let $x \in \{0,1\}^n$ be the variables of the LP. Denote by $\bar{x}, \hat{x}$ an optimal integer solution and an optimal solution of the LP relaxation, respectively. Suppose an index $k$ exists such that $\hat{x}_k = 1$. Then, the instance $\tilde{c}$ defined as
	\begin{equation}\label{eq:better-cost-IG}
		{\tilde{c}_j} =
		\begin{cases}
			c_j \quad j \neq k, \\
			\epsilon \quad j = k, 
		\end{cases}
	\end{equation}
	has a greater or equal integrality gap than the instance $c$. Moreover, $\hat{x}$ is an optimum for the LP relaxation of the instance $\tilde{c}$.
\end{lemma}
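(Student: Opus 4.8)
The plan is to control precisely how the LP optimum and the integer optimum move when the single coordinate cost $c_k$ is lowered to $\epsilon$. Set $\delta := c_k - \epsilon \ge 0$, and abbreviate $A := c^\intercal \bar{x}$ for the integer optimum of the instance $c$ and $B := c^\intercal \hat{x}$ for its LP optimum. Because the problem is a minimization, the relaxation is a lower bound, so $B \le A$; this single inequality is the only place where the combinatorial structure enters, and it is what ultimately pushes the gap up. I will assume throughout that the LP optimum stays strictly positive (otherwise the integrality gap is undefined), which holds in our intended application since the fractional objective is bounded away from $0$.

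First I would settle the ``Moreover'' claim, that $\hat{x}$ remains LP-optimal for $\tilde{c}$, since this also pins down the new denominator. For every LP-feasible $x$ one has the identity $\tilde{c}^\intercal x = c^\intercal x - \delta\, x_k$; since $\delta \ge 0$ and $0 \le x_k \le 1$, this gives $\tilde{c}^\intercal x \ge c^\intercal x - \delta \ge B - \delta$, whereas the hypothesis $\hat{x}_k = 1$ makes the bound tight, $\tilde{c}^\intercal \hat{x} = B - \delta$. Hence $\hat{x}$ minimizes $\tilde{c}^\intercal(\cdot)$ over the relaxation and
$$\text{opt}_L(\tilde{c}) = B - \delta.$$

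Next I would bound the new integer optimum from below and assemble the ratio. If $\bar{x}'$ is optimal for $\tilde{c}$ among integer points, it is also integer-feasible for $c$, so $c^\intercal \bar{x}' \ge A$, and using $\bar{x}'_k \le 1$ with $\delta \ge 0$ yields $\text{opt}_I(\tilde{c}) = c^\intercal \bar{x}' - \delta\, \bar{x}'_k \ge A - \delta$. Combining the two estimates and keeping the positive denominator gives
$$\frac{\text{opt}_I(\tilde{c})}{\text{opt}_L(\tilde{c})} \ge \frac{A - \delta}{B - \delta} \ge \frac{A}{B} = \frac{\text{opt}_I(c)}{\text{opt}_L(c)},$$
where the middle inequality, after clearing the positive denominators, reduces to $\delta(A - B) \ge 0$, true because $A \ge B$ and $\delta \ge 0$. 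The step I expect to demand the most care is not any individual inequality but the bookkeeping that keeps $B - \delta$ positive and correctly identifies it with $\text{opt}_L(\tilde{c})$: the whole argument rests on the asymmetry that $\hat{x}_k = 1$ forces the LP value to drop by exactly $\delta$, while $\bar{x}'_k \le 1$ lets the integer value drop by at most $\delta$, so the ratio can only increase.
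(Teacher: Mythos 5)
Your proof is correct and follows essentially the same route as the paper's: both show that the LP value drops by exactly $\delta = c_k-\epsilon$ (so $\hat{x}$ stays LP-optimal), that the integer optimum drops by at most $\delta$, and then conclude via the monotonicity $\frac{A-\delta}{B-\delta}\geq\frac{A}{B}$ for $A\geq B$. Your write-up is slightly cleaner in that it argues LP-optimality directly from the identity $\tilde{c}^\intercal x = c^\intercal x - \delta x_k$ rather than by contradiction, and you rightly flag the implicit positivity assumption on the denominator that the paper also uses.
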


\begin{proof}
	Let $P \subset [0,1]^n$ be the polytope defined by the LP relaxation. We start proving the second claim. Suppose by contradiction that there exists $y \in P$ such that $ \tilde{c}^\intercal y < \tilde{c}^\intercal \hat{x}$. This can be rewritten as
	\begin{equation*}
		\tilde{c}^\intercal y = \sum_{i=1}^n \tilde{c}_i y_i = \epsilon y_k + \sum_{i \neq k}^n c_i y_i < \epsilon \hat{x}_k + \sum_{i \neq k}^n c_i \hat{x}_i.
	\end{equation*}
	This implies
	\begin{equation*}
		\sum_{i \neq k}^n c_i y_i < \epsilon (\hat{x}_k - y_k) + \sum_{i \neq k}^n c_i \hat{x}_i.
	\end{equation*}
	Thus,
	\begin{align*}
		\begin{aligned}
			c^\intercal y &= c_k y_k + \sum_{i \neq k}^n c_i y_i < c_k y_k + \epsilon(\hat{x}_k-y_k) +  \sum_{i \neq k}^n c_i \hat{x}_i = \\
			& = \epsilon(\hat{x}_k-y_k) + c_k y_k - c_k \hat{x}_k + \sum_{i=1}^n c_i \hat{x}_i = \\
			& = (\epsilon - c_k)(\hat{x}_k - y_k) + c^\intercal\hat{x}  
			= (\epsilon - c_k)(1 - y_k) + c^\intercal\hat{x} \leq c^\intercal\hat{x},
		\end{aligned}
	\end{align*}
	which contradicts the optimality of $\hat{x}$ as optimal solution for the LP with cost $c$. For the first part, we have
	\begin{equation*}
		\text{IG}(\tilde{c}) = \frac{\text{ILP}(\tilde{c})}{\text{LP}(\tilde{c})} = \frac{\text{ILP}(\tilde{c})}{\tilde{c}^\intercal \hat{x}} = \frac{\text{ILP}(\tilde{c})}{\text{LP}(c) - c_k + \epsilon}.
	\end{equation*}
	We have that $\text{ILP}(\tilde{c}) \leq \tilde{c}^\intercal \bar{x} = \sum_{i \neq k} c_i x_i + \epsilon x_k \leq \sum_{i \neq k} c_i x_i + c_k x_k = c^\intercal \bar{x} = \text{ILP}(c)$. We now want to prove that $\text{ILP}(\tilde{c}) \geq \text{ILP}(c) -c_k + \epsilon $. Suppose by contradiction that there exists $\bar{y} \in P$ integer such that $\tilde{c}^\intercal \bar{y} < c^\intercal \bar{x} - c_k + \epsilon$. But then
	\begin{align*}
		\begin{aligned}
			c^\intercal \bar{y} &= c_k \bar{y}_k + \sum_{i \neq k}^n c_i \bar{y}_i = (c_k - \epsilon) \bar{y}_k + \tilde{c}^\intercal \bar{y} \\
			&< (c_k - \epsilon) \bar{y}_k + c^\intercal \bar{x} - c_k + \epsilon = (\bar{y}_k-1)(c_k-\epsilon) + c^\intercal \bar{x} \leq c^\intercal \bar{x},
		\end{aligned}
	\end{align*}
	which contradicts the optimality of $\bar{x}$. We then have that $\text{ILP}(c) \geq \text{ILP}(\tilde{c}) \geq \text{ILP}(c) -c_k + \epsilon$ and $\text{LP}(\tilde{c}) = \text{LP}(c) - c_k + \epsilon$, which imply
	\begin{equation*}
		\frac{\text{ILP}(c)}{\text{LP}(c) - c_k + \epsilon}  = \frac{\text{ILP(c)}}{LP(\tilde{c})}\geq \text{IG}(\tilde{c}) \geq \frac{\text{ILP}(c) - c_k + \epsilon}{\text{LP}(c) - c_k + \epsilon} \geq \frac{\text{ILP}(c)}{\text{LP}(c)} = \text{IG}(c)
	\end{equation*}
	where the last inequality holds because $\text{ILP}(c) \geq \text{LP}(c) \geq 0$, $\text{LP}(c) - c_k + \epsilon \geq 0$, $c_k \geq \epsilon$.
\end{proof}

\begin{observation}\label{obs:add-1-IG-stp}
	Let $\hat{x}$ be a fractional vertex of $P_{CM}(n,t)$ such that $\hat{x}_{ij} = 1$. Because of \Cref{lemma:add-1-IG}, the maximum integrality gap is reached by minimizing the value of $c_{ij}$, making node $i$ and node $j$ collapse onto each other. This can be done by choosing a sequence of values of $\epsilon$ such that $\epsilon \to 0$. The study of the gap of this vertex is then equivalent to the study of the gap of a vertex of a smaller dimension. Note that, if we restrict the study to the metric case, even if \eqref{eq:better-cost-IG} might not define a metric cost, the result still holds because of \Cref{lemma:metric-closure-gap}.
\end{observation}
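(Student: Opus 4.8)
The plan is to derive the statement by feeding \Cref{lemma:add-1-IG} into the dimension-reduction machinery of \Cref{obs:small_dim_identific} and \Cref{obs:adding-ones}, and then to remove the metricity obstruction through \Cref{lemma:metric-closure-gap}. First I would apply \Cref{lemma:add-1-IG} with the coordinate $k$ identified with the arc $(i,j)$ satisfying $\hat{x}_{ij}=1$: since $\hat{x}$ is an optimal LP vertex having this coordinate equal to $1$, the lemma guarantees that replacing $c_{ij}$ by any value $\epsilon$ with $0 \leq \epsilon \leq c_{ij}$ cannot decrease the integrality gap and keeps $\hat{x}$ optimal for the relaxation. Applying this along a sequence $\epsilon \to 0$, and using that the feasible region of the gap program \eqref{eq:gap-cm} is closed while the gap is monotone in $c_{ij}$, the largest gap attainable at $\hat{x}$ is realized in the limit $c_{ij}=0$. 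This justifies the first two sentences of the statement.

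The second step is to read the limit $c_{ij}=0$ as the contraction of the edge $\{i,j\}$. Because this edge carries $\hat{x}_{ij}=1$ and has vanishing cost, its endpoints become indistinguishable for the objective, so I would identify them into a single node and argue that the resulting point is a vertex of a CM polytope with one fewer node. This is precisely the reverse of the unit-edge insertions catalogued in \Cref{obs:adding-ones}: depending on whether the two merged endpoints are two terminals, a terminal and a Steiner node, or two Steiner nodes, the contraction lands in $P_{CM}(n-1,t-1)$ or $P_{CM}(n-1,t)$, undoing one of the moves $(a)$--$(c)$. Since \Cref{obs:adding-ones} asserts that reversing those moves sends vertices to vertices (with proofs analogous to \Cref{lemma:vertex-iso1} and \Cref{lemma:vertex-iso2}), the contracted point is a genuine vertex of the smaller polytope, and by the first step its gap equals the limiting gap of $\hat{x}$. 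This makes precise the claim that the gap analysis of $\hat{x}$ reduces to that of a lower-dimensional vertex.

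The final step is the metric caveat. The cost $\tilde{c}$ defined by \eqref{eq:better-cost-IG} only lowers the single entry $c_{ij}$ to $\epsilon$, so it may violate the triangle inequality \eqref{eq:quadratic_triangle_ineq}. To stay within metric instances I would pass to the metric closure of the instance carrying cost $\tilde{c}$; by \Cref{lemma:metric-closure-gap} both $\text{STP}$ and $\text{opt}_{BCR}$ are preserved under metric closure, hence so is the integrality gap, and therefore the gap improvement granted by \Cref{lemma:add-1-IG} is inherited by a bona fide metric instance. This is exactly the content of the closing sentence of the statement.

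I expect the second step to be the main obstacle. One must check that merging the endpoints of the unit-weight edge yields a point that is truly a \emph{vertex}, rather than merely feasible, of $P_{CM}(n-1,\cdot)$, and that the tree structure of the support guaranteed by \Cref{lemma:cm_connected} and \Cref{lemma:max_edges_cm} is preserved under contraction. This requires a case analysis on the types (terminal, Steiner, or root) of the two endpoints, paralleling cases $(a)$--$(c)$ of \Cref{obs:adding-ones}, together with a verification that the CM constraints \eqref{eq:cm_subt}--\eqref{eq:cm_in_out_flow} transfer correctly across the identification of $i$ and $j$.
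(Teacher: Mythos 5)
Your proposal is correct and follows essentially the same route as the paper, which treats this as an observation justified only by citing \Cref{lemma:add-1-IG} for the monotone decrease of $c_{ij}$, \Cref{obs:adding-ones} (reversed) for the identification with a lower-dimensional vertex, and \Cref{lemma:metric-closure-gap} for the metric caveat. The contraction/vertex-preservation step you flag as the main obstacle is indeed left informal in the paper as well (it only remarks that ``the proofs are similar to the ones presented above''), so your elaboration is a faithful, slightly more detailed rendering of the intended argument rather than a different one.
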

\subsection{Two heuristics procedure for vertices enumeration}
In the following, we state more properties of the CM formulation that permit the design of two different heuristic procedures, in particular, one general search and one dedicated to a specific class of vertices. We are only interested in spanning vertices (\Cref{obs:small_dim_identific}).

\subsubsection{The \texorpdfstring{$\bm{\{1,2\}}$}{1,2}-costs heuristic}
The first procedure is based on the observation that, when looking for integer solutions of the CM formulation, it is enough to study only metric graphs with edge weights in the set $\{1,2\}$.

\begin{theorem}\label{thm:21-metric}
	Let $x \in S_{CM}(n,t)$. Then, $x$ is the unique integer optimal solution for the CM formulation with the metric cost $c_{ij} = 2 - (x_{ij} + x_{ji}) \in \{1, 2\}$.
\end{theorem}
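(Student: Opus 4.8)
The plan is to prove the two claims separately: first that $x$ is optimal for the cost $c_{ij} = 2 - (x_{ij} + x_{ji})$, and then that it is the \emph{unique} integer optimum. Before anything else, I would verify that $c$ is indeed a metric. Since $x_{ij} + x_{ji} \in \{0,1\}$ for any integer point (by the degree-type constraints, in fact here by $x \in \{0,1\}^m$ together with Constraint \eqref{eq:cm_inflow}), we have $c_{ij} \in \{1,2\}$, so all costs are strictly positive and symmetric. The triangle inequality $c_{ij} \leq c_{ik} + c_{kj}$ then holds trivially: the left side is at most $2$ and the right side is at least $1 + 1 = 2$. This disposes of the metricity requirement cheaply.

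For optimality, the key observation is that the chosen cost vector makes $x$ a \emph{minimum-cost} integer solution by a counting argument. By \Cref{lemma:max_edges_cm} we know $x$ is a tree spanning its support, and every edge used by $x$ has cost exactly $1$ (since $x_{ij} + x_{ji} = 1$ on support edges), while every edge \emph{not} used has cost $2$. So $c^\intercal x = \sum_{\{i,j\}} c_e(x_{ij} + x_{ji}) = |E_x|$, the number of edges in the support tree. The strategy is then to show that any other feasible integer solution $z \in S_{CM}(n,t)$ has $c^\intercal z \geq |E_x|$. I would argue that any feasible $z$ must be a connected tree spanning the terminals (connectedness from \Cref{lemma:cm_connected}, acyclicity from the inflow constraints as in the proof of \Cref{lemma:max_edges_cm}), hence must contain at least as many edges as are needed to connect the $t$ terminals through the Steiner structure. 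Since each edge of $z$ costs at least $1$, we get $c^\intercal z \geq |E_z| \geq |E_x|$; the second inequality needs the fact that $x$ is a \emph{minimal} such tree, which is where I would lean on the degree constraints \eqref{eq:cm_in_out_flow} forcing the precise number of edges $2t-3$ (or the spanning count) that \Cref{lemma:max_edges_cm} pins down.

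For uniqueness, I would refine the above: equality $c^\intercal z = c^\intercal x = |E_x|$ forces $z$ to use \emph{only} cost-$1$ edges and exactly $|E_x|$ of them. But a cost-$1$ edge is precisely an edge in the support of $x$, so $E_z \subseteq E_x$ with $|E_z| = |E_x|$, giving $E_z = E_x$ as undirected edge sets. It then remains to recover the orientation, i.e.\ to show the directed solution coincides with $x$; this follows because the root-inflow constraint \eqref{eq:cm_root_inflow} and the per-node inflow bounds \eqref{eq:cm_inflow} force a unique orientation of a tree rooted at $r$ (every non-root node has exactly one incoming arc, determining each edge's direction from the unique $r$-to-node path).

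The main obstacle I anticipate is the step $|E_z| \geq |E_x|$, i.e.\ ruling out a \emph{cheaper} feasible integer solution with strictly fewer edges. A competing solution could in principle connect the terminals using fewer edges by exploiting different Steiner nodes, so I cannot merely invoke the upper bound of \Cref{lemma:max_edges_cm}; I need a matching lower bound on the edge count of \emph{any} feasible $z$ in terms of the structure of $x$. The cleanest route is probably to observe that, because $x$ itself is feasible, its support tree already realizes the minimum number of edges compatible with the constraints \eqref{eq:cm_subt}--\eqref{eq:cm_in_out_flow}, so any feasible $z$ satisfying the same cut constraints on the same terminal set must span those terminals and therefore cannot be strictly cheaper. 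I would make this precise by a direct cut argument: each terminal in $V \setminus \{r\}$ needs inflow $\geq 1$, forcing enough cost-$1$ edges, and the doubling constraint \eqref{eq:cm_in_out_flow} prevents "shortcutting" through under-used Steiner nodes, so no integer solution can beat $|E_x|$.
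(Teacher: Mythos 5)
There is a genuine gap in the optimality half of your argument, and it sits exactly where you yourself flagged the difficulty. The chain $c^\intercal z \geq |E_z| \geq |E_x|$ fails at the second inequality: a feasible integer solution $z$ is a tree on $t + s'$ nodes, where $s'$ is the number of Steiner nodes it uses, so $|E_z| = t + s' - 1$, and nothing prevents $s' < s$. In particular, the spanning tree on the terminals alone (no Steiner nodes) is feasible for CM --- Constraint \eqref{eq:cm_in_out_flow} is vacuous for an unused Steiner node --- and has only $t-1$ edges. So your proposed repair, namely that ``$x$'s support tree already realizes the minimum number of edges compatible with the constraints,'' is false as stated: the minimum edge count over $S_{CM}(n,t)$ is $t-1$, which is strictly below $|E_x|$ whenever $x$ uses any Steiner node. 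The closing appeal to the cut constraints and to \eqref{eq:cm_in_out_flow} ``preventing shortcutting'' does not supply the missing quantitative bound.

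What is actually needed --- and what the paper does --- is a \emph{cost} comparison rather than an \emph{edge-count} comparison in the case $s' < s$. Let $k = |\mathcal{S}(x) \setminus \mathcal{S}(x')|$ be the number of Steiner nodes of $x$ that the competitor drops. Each such node has degree at least $3$ in $G_x$ (this is where \eqref{eq:cm_inflow} and \eqref{eq:cm_in_out_flow} enter), so dropping them removes at least $3k - z$ cost-$1$ edges from the competitor's reach, where $z$ is the number of support edges of $x$ joining two dropped nodes; since those nodes induce a forest inside the tree $G_x$, one has $z \leq k-1$. The competitor needs $t + s' - 1 = t + s - k - 1$ edges but has access to at most $(t+s-1) - (3k - z)$ cost-$1$ edges, so it is forced onto cost-$2$ edges and a short computation gives $c^\intercal x' - c^\intercal x \geq k - z \geq 1$. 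Your argument handles the case $s' \geq s$ correctly (there $|E_z| \geq |E_x|$ does hold, and your uniqueness step --- equality forces $E_z = E_x$ and the rooted orientation is unique --- matches the paper's), but without the degree-counting trade-off above the theorem is not proved.
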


\begin{proof}
	Consider $x$ and the STP instance given by the vector $c$ defined in the statement. We want to prove that $x$ is optimal. Let $x'$ be an integer optimal solution for $c$ and let $s, s'$ be the number of Steiner nodes of $x$ and $x'$, respectively. Let us write $x_e = x_{ij} + x_{ji}$ and the same for $x'$. We will divide the proof into two cases. First, we will prove $(i)$ that if $s' \geq s$, then necessarily $s' = s$ and $x = x'$. Note that this will also give us the uniqueness of the solution. Next, we will prove $(ii)$ that if $s' < s$, it leads to a contradiction.
	\begin{itemize}
		\item[$(i)$] Since the optimal solution is a tree with $t$ terminals and $s$ and $s'$ Steiner node, respectively,  we have that 
		\begin{equation*}
			\sum_{e \in E} x_e = t+s-1, \qquad \sum_{e \in E} x'_e = t+s'-1.
		\end{equation*}
		Note that the definition of $c$ implies that the cost is equal to 1 on the edges of the support graph of the solution and equal to 2 otherwise. 
		Because of the  definition of $c$, we have that
		\begin{equation*}
			\sum_{e \in E} c_ex_e = t+s-1.
		\end{equation*}
		Now let $I_0 = \{e : x_e = 0, \; x_e' = 1\}$, $I_1 = \{e : x_e = 1, \;  x_e' = 0\}$, $I= \{e : x_e =  x_e'\}$. We then have that
		\begin{align*}
			\begin{aligned}
				\sum_{e \in E} c_ex_e' &= \sum_{e \in I_0} c_ex_e' + \sum_{e \in I_1} c_ex_e' + \sum_{e \in I} c_ex_e' = 2 \times |I_0| + |I| \geq |I_0| + |I|= \\
				& = t + s' - 1 \geq t+s-1 = \sum_{e \in E} c_ex_e,
			\end{aligned}
		\end{align*}
		and this, together with the fact that $x'$ is optimal for $c$, implies that all the inequalities are actually equalities. This is realized if and only if $s' = s$ and $|I_0| = 0$. Since $s' = s$,  we have that $x$ and $x'$ have the same number of edges, and given the fact that $|I_0| = 0$, we cannot have that $|I_1| \geq 1$, and so $|I_1| = 0$. By the fact that, fixed the root, there exists a unique orientation, we have that $x$ and $x'$ coincide.
		\item[$(ii)$] Let $\mathcal{S}(x)$ and $\mathcal{S}(x')$ be the set of Steiner nodes of the solution $x$ and $x'$, respectively.
		Let $\mathcal{S}= \{s_1, \dots, s_k\} = \mathcal{S}(x) \setminus \mathcal{S}(x')$ and let $z$ be the number of edges of $x$ with extreme points $s_i, s_j \in \mathcal{S}(x)$. Note that $\mathcal{S} \neq \emptyset$, otherwise we would have $\mathcal{S}(x) \subset \mathcal{S}(x')$ and so $s' \geq s$. Now, $x'$ is a tree with $s'+t$ nodes. Thanks to the hypothesis $s' < s$, we have  $s'= s - k$, and hence $x'$ is a tree with $s+t-k$ nodes, so $s+t-k-1$ edges. On the other side, $x$ has $s+t-1$ edges, all of them of cost $1$, while all of the other edges have cost $2$. We now evaluate how many edges of cost $2$ $x'$ must have, given that it does not contain any node of the set $\mathcal{S}$. We have that $c$ contains exactly $s+t-1$ edges of cost $1$, and the number of those edges that contain a node of $\mathcal{S}$ is $\left(\sum_{i=1}^k deg(s_i)\right) - z$. 
		
		Since we said that $x'$ must contain $s+t-k-1$ edges, its cost is $E_1 + 2 \times E_2$, where $E_1$ is the number of edges of cost $1$ and $E_2$ is the number of edges of cost 2, and we have that
		\begin{equation*}
			E_1 \leq  s+t-1-\left(\left(\sum_{i=1}^k deg(s_i)\right) - z\right), \quad 
			E_2  = s+t-k-1 - E_1,
		\end{equation*}
		and the minimum of $E_1 + 2 \times E_2 = 2(s+t-k-1) - E_1$ is attained when $E_1$ is exactly equal to its upper bound. We then have that
		\begin{align*}
			\begin{aligned}
				c^\intercal x' - c^\intercal x &= E_1 + 2 \times E_2 - (s+t-1) = s +t - 2k -1 - E_1  = \\
				& \geq s+t-2k-1-s-t-1+\left(\left(\sum_{i=1}^k deg(s_i)\right) - z\right) = \\
				& = \left(\sum_{i=1}^k deg(s_i)\right) - 2k -z \geq 3k - 2k -z \geq k -z \geq 1,
			\end{aligned}
		\end{align*}
		So, $x'$ is not optimal, and we have a contradiction. Note that $deg(s_i) \geq 3$ because of Constraints \eqref{eq:cm_inflow} and \eqref{eq:cm_in_out_flow}, and $k -z \geq 1$ because the support graph associated to $\mathcal{S}$ as a subgraph of $x$ is a forest since it is a subgraph of a CM solution, which is a tree.
	\end{itemize}
\end{proof}

\begin{observation}\label{obs:1-2-not-exha}
	Note that the generalization of \Cref{thm:21-metric} does not hold in general for the non-integer case, that is, if $x$ is a non-integer point of $P_{CM}(n,t)$, then $x$ is not necessarily an optimal solution for the CM formulation with the metric cost
	\begin{equation}\label{eq:2-1-cost-fract}
		c_{ij} = 2 - \mathds{1}_{E_x}(\{i,j\}), \qquad E_x = \{e = \{i,j\} \in E  \, : \, x_{ij} + x_{ji} > 0\},
	\end{equation}
	see, for example, the vertex depicted in \Cref{fig:8-5-2}. In this case, with the cost assignation \eqref{eq:2-1-cost-fract}, we have that the fractional vertex has a value of $11/2$ (multiply the number of edges by 1/2), while the optimal value of the CM formulation for this instance is $5$. Thus, the vertex shown in \Cref{fig:8-5-2} cannot be optimal for this instance. It still holds that the vertex mentioned above is an optimum of a metric graph where every edge weight is in the set $\{1, 2\}$, namely setting the cost as in \eqref{eq:2-1-cost-fract} but changing the cost of the two edges outflowing the root, setting them to $2$ instead of $1$. Note that in this case, the subgraph linked to edges with cost $1$ is not connected, as the root represents a connected component.
\end{observation}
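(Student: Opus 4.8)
Since the claim is that the analogue of \Cref{thm:21-metric} \emph{fails} on fractional points, the plan is not to prove a general statement but to exhibit and certify a single counterexample. Concretely, I take $\hat{x}$ to be the vertex of $P_{CM}(8,5)$ drawn in \Cref{fig:8-5-2}, and I must establish that, under the cost $c_{ij} = 2 - \mathds{1}_{E_x}(\{i,j\})$ of \eqref{eq:2-1-cost-fract} (which prices every support edge at $1$ and every non-support edge at $2$), the point $\hat{x}$ is \emph{not} a minimizer of $\sum_{\{i,j\}\in E} c_e(x_{ij}+x_{ji})$ over $P_{CM}(8,5)$. This alone refutes the fractional generalization.

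First I would read off the support of $\hat{x}$ from \Cref{fig:8-5-2}: it consists of $11$ edges, each carrying total weight $\hat{x}_{ij}+\hat{x}_{ji}=\tfrac12$. Since \eqref{eq:2-1-cost-fract} assigns cost $1$ to exactly these edges, the fractional objective is
\[
\sum_{\{i,j\}\in E} c_e\,(\hat{x}_{ij}+\hat{x}_{ji}) \;=\; \sum_{e\in E_x}(\hat{x}_{ij}+\hat{x}_{ji}) \;=\; 11\cdot\tfrac12 \;=\; \tfrac{11}{2}.
\]
Next I would produce an explicit $\bar{x}\in S_{CM}(8,5)$, i.e.\ a directed Steiner tree rooted at $r$ spanning the five terminals, whose cost under $c$ equals $5$; the natural candidate routes the terminals through the Steiner nodes using as many cost-$1$ (support) arcs as possible, paying the $+1$ penalty only on the few arcs forced to leave the support. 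This already yields $\text{opt}_{CM}\le 5 < \tfrac{11}{2}$, so $\hat{x}$ cannot be optimal, which is precisely the assertion. To recover the exact value $\text{opt}_{CM}=5$ stated in the observation I would add a matching lower bound: every element of $S_{CM}(8,5)$ is a tree (\Cref{lemma:max_edges_cm}), so achieving cost $4$ would force a $4$-arc tree on the five terminals using only cost-$1$ edges; a short inspection of \Cref{fig:8-5-2} shows the support contains no terminal--terminal edge, so no such tree exists and no integer solution costs less than $5$.

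The genuinely nontrivial part is this certification of optimality rather than the refutation itself, which needs only $\text{opt}_{CM}\le 5$. For the two supplementary remarks I would proceed as follows. Re-pricing the two arcs leaving the root from $1$ to $2$ raises the fractional value of $\hat{x}$ by $2\cdot\tfrac12=1$, to $\tfrac{13}{2}$, and I would confirm that $\hat{x}$ is now LP-optimal by exhibiting dual multipliers satisfying the complementary-slackness system \eqref{eq:gap-cm} (equivalently, a direct check that no vertex of $P_{CM}(8,5)$ beats $\tfrac{13}{2}$ under the modified metric); I expect this optimality check to be the main obstacle, as it is the only step requiring more than a weight count. The final remark is immediate: once both root arcs cost $2$, the root is incident to no cost-$1$ edge, so in the subgraph induced by the cost-$1$ edges the root is an isolated connected component, and that subgraph is therefore disconnected.
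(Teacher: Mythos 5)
There is a genuine gap at the central step of your certification. The quantity the observation calls ``the optimal value of the CM formulation'' is $\text{opt}_{CM}$, i.e.\ the optimum of the \emph{LP relaxation} over $P_{CM}(8,5)$ (the paper uses $\text{opt}_{\mathcal{F}}$ for the relaxation value of every formulation $\mathcal{F}$), not the integer optimum over $S_{CM}(8,5)$. Your plan is to exhibit $\bar{x}\in S_{CM}(8,5)$ of cost $5$, but no such point exists under the cost \eqref{eq:2-1-cost-fract}: reading the support off \Cref{fig:8-5-2} (root $r$, terminals $t_1,\dots,t_4$, Steiner nodes $s_1,s_2,s_3$, with support edges $r s_1, r s_2, s_1 t_2, s_1 t_3, s_2 t_1, s_2 t_4, s_3 t_1, s_3 t_2, s_3 t_3, s_3 t_4, t_3 t_4$), any integer solution is a tree with $k$ edges costing at least $k$, so cost $5$ forces either a $4$-edge tree on the five terminals with three cost-$1$ edges --- impossible, since the support contains exactly \emph{one} terminal--terminal edge, $t_3t_4$ (so your claimed inspection ``no terminal--terminal edge'' also misreads the figure, though the corrected count still kills this case) --- or a $5$-edge all-cost-$1$ tree on the five terminals plus one Steiner node, which fails for each choice of hub: with $s_1$ the terminal $t_1$ has no support neighbour in the node set, with $s_2$ it is $t_2$, and with $s_3$ it is the root. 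Hence the integer optimum is $6$ (attained, e.g., by $r\to s_1\to\{t_2,t_3\}$, $r\to s_2\to\{t_1,t_4\}$).

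This does not merely weaken your bound; it breaks the refutation itself. Since $11/2=5.5<6$, the vertex of \Cref{fig:8-5-2} is \emph{strictly better} than every integer solution, so no comparison against $S_{CM}$ can show it is non-optimal over $P_{CM}$. The value $5$ asserted in the observation is attained by some other, necessarily fractional, point of $P_{CM}(8,5)$ (the paper obtains it by solving the LP), and any self-contained proof must exhibit such a fractional feasible point of value $5$ (or at least one of value strictly below $11/2$) together with feasibility checks for the cut and degree constraints \eqref{eq:cm_subt}--\eqref{eq:cm_in_out_flow}, or else invoke LP duality directly. Your remaining remarks are sound: the repricing arithmetic ($+2\cdot\tfrac12$ on the two root arcs, giving $13/2$), the idea of certifying LP-optimality of the vertex for the modified $\{1,2\}$-cost via the complementary-slackness system \eqref{eq:gap-cm}, and the immediate observation that the root becomes an isolated component of the cost-$1$ subgraph. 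But as written, the proposal's key certificate does not exist, and the argument built on it would fail.
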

The observation above, together with \Cref{thm:21-metric}, lead us to formulate a heuristic search based on the generation of metric graphs with edge weights in the set $\{1, 2\}$ and then solve the STP on those instances. The detailed procedure called $\text{OTC}(n,t)$ as in One-Two-Costs is described in \Cref{alg:1-2}.

Note that for computational reasons, we restricted our search to the generation of only connected graphs, and so to graphs with costs $\{1,2\}$ in which the subgraph regarding the edges of cost $1$ spans all the nodes and is connected. We know this is a strong restriction, making the procedure unable to find some vertices, see \Cref{obs:1-2-not-exha}. Note also that we restrict our search to graphs $G = (V, E)$ with $ n \leq |E| \leq n\cdot t - t^2$: the lower bound is given by the fact that we are only interested in non-integer vertices, and the upper bound was derived after a first set of computational experiments.

\begin{algorithm}[t!]
	\caption{$\{1,2\}$-costs vertices heuristic}
	\label{alg:1-2}
	\begin{algorithmic}
		\STATE{\textbf{procedure} OTC$(n,t)$}
		\STATE{$\mathbb{G} = \{G = (V,E)\; | \; G \text{ connected}, \; |V| = n, \; n \leq |E| \leq n \cdot t - t^2\}$}
		\STATE{$\mathbb{T} = \{T \; | \;  T \subset \{1, \dots, n\}, \; |T| = t\}$}
		\STATE{$\mathfrak{G} = \emptyset$}
		\FOR{$G \in \mathbb{G}$}
		\FOR{$T \in \mathbb{T}$}
		\FOR{$r \in T$}
		\STATE{$G_{T,r} = $ node-colored graph with}
		\STATE{\hspace*{9.5em}$\cdot$ $G$ as its support graph}
		\STATE{\hspace*{9.5em}$\cdot$ $r$ colored as \texttt{root}}
		\STATE{\hspace*{9.5em}$\cdot$ $i$ colored as \texttt{terminal} $\forall i \in T \setminus \{r\}$}
		\STATE{\hspace*{9.5em}$\cdot$ $j$ colored as \texttt{steiner} $\forall j \notin T$}
		\IF{$ G_{T, r} \ncong H  \; \forall H \in \mathfrak{G}$}
		\STATE{add $G_{T, r}$ to $\mathfrak{G}$}
		\ENDIF
		\ENDFOR
		\ENDFOR
		\ENDFOR
		\STATE{$\mathcal{V} = \emptyset$}
		\FOR{$G_{T,r} \in \mathfrak{G}$}
		\STATE{obtain the STP instance $(G, T, r)$ from $G_{T, r}$ with $c_{ij} = 
			\begin{cases}
				1 \quad \text{if } \{i,j\} \in G_{T, r}\\
				2 \quad \text{otherwise}
			\end{cases}$}
		\STATE{solve the relaxation of the CM formulation for the instance above}
		\IF{a solution $x$ is found and it is a non-integer vertex of $P_{CM}(n,t)$}
		\STATE{add $x$ to $\mathcal{V}$}
		\ENDIF
		\ENDFOR
		\RETURN$\mathcal{V}$
	\end{algorithmic}
\end{algorithm}

\subsubsection{Pure half-integer vertices}
Guided by the literature, we narrow our research to a specific class of vertices, which is conjectured to exhibit the maximum integrality gap in other NP-Hard problems (see, e.g., the Symmetric Traveling Salesman Problem~\cite{benoit2008finding, boyd2007structure} and the Asymmetric Traveling Salesman Problem~\cite{elliott2008integrality}). In particular, we restrict our attention to vertices having their values in $\{0, \frac{1}{2}, 1\}$.
Given a non-integer vertex $x$ of $P_{CM}(n,t)$, we say that $x$ is \textit{half-integer} (HI) if $x_{ij} \in \{0, 1/2, 1\} \; \forall i, j \in V$ and we say that $x$ is \textit{pure half-integer} (PHI) if $x_{ij} \in \{0, 1/2\} \; \forall i, j \in V$. 
In the following section, we state and prove some properties of PHI vertices. The choice of focusing only on PHI vertices instead of HI vertices is motivated by \Cref{lemma:add-1-IG} and \Cref{obs:add-1-IG-stp}.

\begin{lemma}\label{lemma:oriented-cycles}
	Let $x$ be a pure half-integer solution of $P_{CM}(n,t)$ which is also a vertex of $P_{BCR}(n,t)$ that is optimum for a metric cost. Then we have that $x_{ij} > 0 \Rightarrow x_{ji}~=~0$. 
\end{lemma}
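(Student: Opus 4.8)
The plan is to argue by contradiction, combining complementary slackness at the optimum with an uncrossing argument on the tight cut constraints. Since $x$ is pure half-integer, $x_{ij}>0$ together with $x_{ji}>0$ forces $x_{ij}=x_{ji}=\tfrac12$, so I assume this. Two easy facts come first: because $x(\delta^-(r))=0$ by \eqref{eq:cm_root_inflow} while both $i$ and $j$ receive inflow $\tfrac12$, neither endpoint is the root; and because $c$ is metric (so $c_{uv}=0$ iff $u=v$), the edge has strictly positive cost $c_{ij}>0$. The degree constraint \eqref{eq:dcut_degree} is moreover tight on $\{i,j\}$.

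The key step is to extract two supporting cuts from optimality. Writing the complementary-slackness conditions that characterize optimality of $x$ (exactly the system underlying the $\text{Gap}$ problem of \Cref{sec:ig_dcut}), the arcs $(i,j)$ and $(j,i)$ both carry positive flow, hence their dual arc-constraints hold with equality: for $(i,j)$, $\,y_{\{i,j\}}+\sum_{W:(i,j)\in\delta^-(W)} z_W = c_{ij}$, and for $(j,i)$, $\,y_{\{i,j\}}+\sum_{W:(j,i)\in\delta^-(W)} z_W = c_{ij}$, with $y_{\{i,j\}}\le 0$ and $z_W\ge 0$. Since $c_{ij}>0$ and $-y_{\{i,j\}}\ge 0$, each cut-sum is at least $c_{ij}>0$, so there is a tight cut $W_B$ with $i\notin W_B,\ j\in W_B$ (carrying arc $(i,j)$) and a tight cut $W_A$ with $j\notin W_A,\ i\in W_A$ (carrying arc $(j,i)$), both satisfying $x(\delta^-(W_A))=x(\delta^-(W_B))=1$. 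This is precisely where metric optimality is essential: the degree dual alone cannot pay for $c_{ij}$ on both arcs, so genuine cuts of both orientations must be present.

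Next I would uncross $W_A$ and $W_B$ using submodularity of $W\mapsto x(\delta^-(W))$. Because $i\in W_A\setminus W_B$ and $j\in W_B\setminus W_A$, the two arcs of the cycle run between the private parts $W_A\setminus W_B$ and $W_B\setminus W_A$, so they contribute their entire weight $x_{ij}+x_{ji}=1$ to the submodular slack $x(\delta^-(W_A))+x(\delta^-(W_B))-x(\delta^-(W_A\cap W_B))-x(\delta^-(W_A\cup W_B))$. Substituting the tight values and using $x(\delta^-(W_A\cup W_B))\ge 1$ (the union avoids $r$ and still meets $T$, so \eqref{eq:cm_subt} applies) forces $x(\delta^-(W_A\cap W_B))=0$ and $x(\delta^-(W_A\cup W_B))=1$.

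Finally I would turn $x(\delta^-(W_A\cap W_B))=0$ into the contradiction. Since $r\notin W_A\cap W_B$, if $W_A\cap W_B$ met $T$ it would already violate the cut constraint \eqref{eq:cm_subt}, which closes the argument. The remaining possibility, that $W_A\cap W_B$ is a nonempty terminal-free set receiving no inflow, is the step I expect to be the main obstacle. I would rule it out using \Cref{lemma:cm_connected} together with the Steiner in/out constraint \eqref{eq:cm_in_out_flow}: a terminal-free set with no incoming flow can only sustain flow it manufactures at a ``source'' Steiner node, and metric optimality (again $c_{ij}>0$) forbids an optimal solution from carrying such superfluous, uncharged flow. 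Choosing $W_A$ and $W_B$ inclusion-minimal among the tight cuts of their orientation is the lever I would use to force $W_A\cap W_B=\emptyset$ and make this elimination clean.
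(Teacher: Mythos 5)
Your proposal takes a genuinely different route from the paper (which argues combinatorially: it locates an arc $(k,i)$ entering the $2$-cycle and then either deletes/reroutes the arc $(j,i)$ to contradict optimality, or writes $x=\frac12 y+\frac12 z$ for two distinct feasible points to contradict vertexhood), but your route has a gap I do not see how to repair. The duality step is fine: since $c_{ij}>0$ and the duals $y_e,d_{ij}$ are nonpositive, complementary slackness does yield tight cuts $W_A\ni i$, $W_A\not\ni j$ and $W_B\ni j$, $W_B\not\ni i$, and the submodular identity correctly forces $x(\delta^-(W_A\cap W_B))=0$ and $x(\delta^-(W_A\cup W_B))=1$. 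The problem is the endgame. If $W_A\cap W_B$ meets $T$ you do get a contradiction, but the case you propose to engineer with inclusion-minimal cuts, namely $W_A\cap W_B=\emptyset$, is not a contradiction at all: writing $P_2=W_A$, $P_3=W_B$, $P_4$ for the rest, the tight equations $x(P_3\to P_2)+x(P_4\to P_2)=1$, $x(P_2\to P_3)+x(P_4\to P_3)=1$, $x(P_4\to P_2)+x(P_4\to P_3)=1$ are simultaneously satisfiable (they merely say the only arcs crossing between $W_A$ and $W_B$ are the two half-arcs of the $2$-cycle), so the uncrossing terminates with no inconsistency. The nonempty terminal-free case is also not eliminated by \Cref{lemma:cm_connected} plus \eqref{eq:cm_in_out_flow}: a set $S$ with $x(\delta^-(S))=0$ may still carry internal and outgoing flow as far as those constraints are concerned, and you would need the redundant inequality of \Cref{lemma:constr-redundancy} (or optimality directly) to rule that out --- but even granting this, the empty-intersection case remains open.

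The deeper reason the approach cannot close is that it never uses the hypothesis that $x$ is a vertex of $P_{BCR}(n,t)$; it only uses feasibility and complementary slackness, i.e., optimality. Optimality alone does not imply the conclusion: take two arc-disjoint optimal integer trees, one containing the path $k\to i\to j$ and the other the path $l\to j\to i$ (with $i,j$ terminals so that \eqref{eq:cm_in_out_flow} is vacuous there), and let $x$ be their midpoint. This $x$ is optimal, lies in $P_{CM}\cap P_{BCR}$, is pure half-integer, and has $x_{ij}=x_{ji}=\frac12$ --- it is simply not a vertex. This is exactly the configuration the paper's case (b) handles, and there the contradiction is with vertexhood via the convex decomposition, not with optimality. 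Any correct proof must invoke the vertex property at some point; to salvage your argument you would have to show that in the surviving cases the constraints tight at $x$ fail to have full rank, which in effect reduces to the paper's explicit path-switching construction.
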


\begin{proof}
	Since $x$ is pure half-integer, we have $x_{ij} = 1/2$. Suppose by contradiction that $x_{ji} \neq 0$, and so by the same reasoning, $x_{ji} = 1/2$. Because of \Cref{lemma:cm_connected}, we have that the set $\{i,j\}$ is not a connected component of $x$, namely, is not an isolated 2-cycle, and neither of the two nodes can be the root, as the root has inflow equal to $0$ because of Constraint \eqref{eq:cm_root_inflow}. Thus, there must exist a path from the root to the two nodes, and so there must exist an active arc going from a third node to one of the two nodes we are considering. Without loss of generality, let $x_{ki} > 0$, that implies $x_{ki} = 1/2$. Suppose $x_{ik} = 0 $ since, otherwise, we can do the same reasoning for nodes $\{i, j, k\}$ and repeat it until we return to the root, which has no inflow. Now, we distinguish between two cases. 
	
	\begin{enumerate}[(a)]
		\item  No other inflow is present in $j$, that is, $x_{aj} = 0 \; \forall a \neq j$ . Note that this implies that $j$ is not a terminal since it has an inflow of $1/2$. Then $x$ is not optimum. Consider $x'$ that is equal to $x$ on all the arcs but the arc $(j, i)$, and set $x'_{ji} = 0$. For any nonnegative $c$, $c^\intercal x' < c^\intercal x$. Note that $x'$ is feasible for the BCR. Constraint \eqref{eq:dcut_degree} is clearly satisfied. Constraint \eqref{eq:dcut_subt} could not be verified by $x'$ only for a set $W$ for which $ i \in W, \; j \notin W$, because then the only variables that differs from $x$ appears. Let us take one of these sets and define $\bar{W} = W \cup \{j\}$. We can write
		\begin{align*}
			\sum_{(a,b) \in \delta^-(W)} x_{ab}' &= \sum_{\substack{(a,b) \in \delta^-(W)\\ (a,b) \neq (j,i)}} x_{ab}' + x_{ji}' = \sum_{(a,b) \in \delta^-(\bar{W})} x_{ab}' - \sum_{a \in V \setminus W} x_{aj}' + x_{ji}' = \\
			&= \sum_{(a,b) \in \delta^-(\bar{W})} x_{ab} - \sum_{a \in V \setminus W} x_{aj} + x_{ji}' = \\
			& =\sum_{(a,b) \in \delta^-(\bar{W})} x_{ab} +0 + x_{ji}' \geq 1 + 0 + 0 = 1,
		\end{align*}
		where the inequality holds because $x$ is feasible and $W$ is a valid set. So we have that $x'$ is feasible even for the constraints regarding the sets  $W$ for which $i \in W, \; j \notin W$ and so it is feasible for the BCR.
		If $x'$ is feasible for the CM, the proof is concluded. 
		If $x'$ is not feasible for the CM formulation, it is because of Constraint \eqref{eq:cm_in_out_flow} because $x'$ satisfied all of the other constraints since $x$ is feasible for CM.
		Regarding Constraint \eqref{eq:cm_in_out_flow}, if $x'$ is not feasible for the CM anymore, it is because the outdegree of $j$ in $x$ was exactly two, namely $x_{ji} = \frac{1}{2}$ and there exists $d$ such that $x_{jd} = \frac{1}{2}$.
		Hence, we can build $x''$ from $x'$ by removing arc $x'_{ij}$ and $x'_{jd}$ from $x'$ and by adding the arc $x''_{id}$, avoiding the detour in $j$. 
		This solution is feasible for the CM, and it holds
		$c^\intercal x'' \leq c^\intercal x'$,
		for the non-negativity and the triangle inequality. 
		Hence,
		$c^ \intercal x'' < c^\intercal x$, 
		from the relation between $x$ and $x'$ already proved.
		Hence, we can conclude that if the only inflow of the (Steiner) node $j$ is $x_{ij}$, $x$ is neither optimal for the CM nor for the BCR.
		\item The total inflow of $j$ is $1$, and so there exists $l$ such that $x_{lj} = 1/2$. Suppose $x_{jl} = 0$ and suppose that both $k$ and $l$ have an inflow of $1$. This will ensure the feasibility of the two points we are about to construct. Then $x$ is not a vertex of $P_{BCR}(n,t)$, because by setting 
		\begin{align*}
			y_{ab} & =
			\begin{cases}
				0, & \text{if } a = l, \; b = j, \; \text{or} \; a = j, \; b = i,\\
				1, & \text{if } a = i, \; b = j, \; \text{or} \;  a = k, \; b = i,\\
				x_{ab},  & \text{else,} 
			\end{cases} \\
			z_{ab} & =
			\begin{cases}
				1, & \text{if } a = l, \; b = j, \; \text{or} \; a = j, \; b = i,\\
				0, & \text{if } a = i, \; b = j, \; \text{or} \;  a = k, \; b = i,\\
				x_{ab},  & \text{else,} 
			\end{cases}
		\end{align*}
		we have $y \neq z$, $x = \frac{1}{2} y + \frac{1}{2} z$, and $y,z \in P_{BCR}(n,t)$ by an argument similar from the one above.
		
		Visually, we can represent the three points as the following 
		{\begin{center}
				\begin{tikzpicture}
					\node[] at (-1,0) (0) {$x =$};
					\fill[] (0,0) circle (0.05);
					\fill[] (1,0) circle (0.05);
					\fill[] (2,0) circle (0.05);
					\fill[] (3,0) circle (0.05);
					\node[] at (-0.2,0) (0) {\scriptsize k};
					\node[] at (1.2,0) (1) {\scriptsize i};
					\node[] at (1.8,0) (2) {\scriptsize j};
					\node[] at (3.2,0) (3) {\scriptsize l};
					\draw[->, densely dashdotted] (0.1,0) -- (0.9,0);
					\draw[->, densely dashdotted] (1,0.1) to [out=60,in=120] (2,0.1);
					\draw[->, densely dashdotted] (2,-0.1) to [out=240,in=300] (1,-0.1);
					\draw[->, densely dashdotted] (2.9,0) -- (2.1,0);
				\end{tikzpicture}
				\\
				\begin{tikzpicture}
					\node[] at (-1,0) (0) {$y =$};
					\fill[] (0,0) circle (0.05);
					\fill[] (1,0) circle (0.05);
					\fill[] (2,0) circle (0.05);
					\fill[] (3,0) circle (0.05);
					\node[] at (-0.2,0) (0) {\scriptsize k};
					\node[] at (1.2,0) (1) {\scriptsize i};
					\node[] at (1.8,0) (2) {\scriptsize j};
					\node[] at (3.2,0) (3) {\scriptsize l};
					\draw[->] (0.1,0) -- (0.9,0);
					\draw[->] (1,0.1) to [out=60,in=120] (2,0.1);
				\end{tikzpicture}
				\\[10pt]
				\begin{tikzpicture}
					\node[] at (-1,0) (0) {$z =$};
					\fill[] (0,0) circle (0.05);
					\fill[] (1,0) circle (0.05);
					\fill[] (2,0) circle (0.05);
					\fill[] (3,0) circle (0.05);
					\node[] at (-0.2,0) (0) {\scriptsize k};
					\node[] at (1.2,0) (1) {\scriptsize i};
					\node[] at (1.8,0) (2) {\scriptsize j};
					\node[] at (3.2,0) (3) {\scriptsize l};
					\draw[->] (2,-0.1) to [out=240,in=300] (1,-0.1);
					\draw[->] (2.9,0) -- (2.1,0);
				\end{tikzpicture}
		\end{center}}
		where we draw only the interesting arcs. Note that dashed arcs represent a value of $1/2$ while full arcs represent a value of $1$.
		If $x_{jl} \neq 0$, that is, $x_{jl} = 1/2$, then we can go backward until we find one node $m$ such that there exists $p$ for which $x_{pm} = 1/2,\;  x_{mp} = 0$, and such a $p$ exists because we can go back to the root with the same reasoning as above. Suppose that both $k$ and $p$ have an inflow of $1$. We now do the same reasoning with $y$ and $z$ but consider the whole paths from $p$ to $i$ and from $k$ to $m$ instead of the paths from $l$ to $i$ and from $k$ to $j$.
		
		Visually, we can represent the three points as the following 
		{\begin{center}
				\begin{tikzpicture}
					\node[] at (-1,0) (0) {$x =$};
					\fill[] (0,0) circle (0.05);
					\fill[] (1,0) circle (0.05);
					\fill[] (2,0) circle (0.05);
					\fill[] (3,0) circle (0.05);
					\node[] at (-0.2,0) (0) {\scriptsize k};
					\node[] at (1.2,0) (1) {\scriptsize i};
					\node[] at (1.8,0) (2) {\scriptsize j};
					\node[] at (3.2,0) (3) {\scriptsize l};
					\draw[->, densely dashdotted] (0.1,0) -- (0.9,0);
					\draw[->, densely dashdotted] (1.1,0.2) to [out=60,in=120] (1.9,0.2);
					\draw[->, densely dashdotted] (1.9,-0.2) to [out=240,in=300] (1.1,-0.2);
					\draw[->, densely dashdotted] (2.9,-0.2) to [out=240,in=300] (2.1,-0.2);
					\draw[->, densely dashdotted] (2.1,0.2) to [out=60,in=120] (2.9,0.2);
					\draw[->, densely dashdotted] (3.9,-0.2) to [out=240,in=300] (3.1,-0.2);
					\draw[->, densely dashdotted] (3.1,0.2) to [out=60,in=120] (3.9,0.2);
					\node[] at (4,0) (3) {\scriptsize $\dots$};
					\draw[->, densely dashdotted] (4.9,-0.2) to [out=240,in=300] (4.1,-0.2);
					\draw[->, densely dashdotted] (4.1,0.2) to [out=60,in=120] (4.9,0.2);
					\fill[] (5,0) circle (0.05);
					\node[] at (4.8,0) (3) {\scriptsize m};
					\fill[] (6,0) circle (0.05);
					\node[] at (6.2,0) (3) {\scriptsize p};
					\draw[->, densely dashdotted] (5.9,0) -- (5.1,0);        
				\end{tikzpicture}
				\\
				\begin{tikzpicture}
					\node[] at (-1,0) (0) {$y =$};
					\fill[] (0,0) circle (0.05);
					\fill[] (1,0) circle (0.05);
					\fill[] (2,0) circle (0.05);
					\fill[] (3,0) circle (0.05);
					\node[] at (-0.2,0) (0) {\scriptsize k};
					\node[] at (1.2,0) (1) {\scriptsize i};
					\node[] at (1.8,0) (2) {\scriptsize j};
					\node[] at (3.2,0) (3) {\scriptsize l};
					\draw[->] (0.1,0) -- (0.9,0);
					\draw[->] (1.1,0.2) to [out=60,in=120] (1.9,0.2);
					\draw[->] (2.1,0.2) to [out=60,in=120] (2.9,0.2);
					\draw[->] (3.1,0.2) to [out=60,in=120] (3.9,0.2);
					\node[] at (4,0) (3) {\scriptsize $\dots$};
					\draw[->] (4.1,0.2) to [out=60,in=120] (4.9,0.2);
					\fill[] (5,0) circle (0.05);
					\node[] at (4.8,0) (3) {\scriptsize m};
					\fill[] (6,0) circle (0.05);
					\node[] at (6.2,0) (3) {\scriptsize p};
				\end{tikzpicture}
				\\[10pt]
				\begin{tikzpicture}
					\node[] at (-1,0) (0) {$z =$};
					\fill[] (0,0) circle (0.05);
					\fill[] (1,0) circle (0.05);
					\fill[] (2,0) circle (0.05);
					\fill[] (3,0) circle (0.05);
					\node[] at (-0.2,0) (0) {\scriptsize k};
					\node[] at (1.2,0) (1) {\scriptsize i};
					\node[] at (1.8,0) (2) {\scriptsize j};
					\node[] at (3.2,0) (3) {\scriptsize l};
					\draw[->] (1.9,-0.2) to [out=240,in=300] (1.1,-0.2);
					\draw[->] (2.9,-0.2) to [out=240,in=300] (2.1,-0.2);
					\draw[->] (3.9,-0.2) to [out=240,in=300] (3.1,-0.2);
					\node[] at (4,0) (3) {\scriptsize $\dots$};
					\draw[->] (4.9,-0.2) to [out=240,in=300] (4.1,-0.2);
					\fill[] (5,0) circle (0.05);
					\node[] at (4.8,0) (3) {\scriptsize m};
					\fill[] (6,0) circle (0.05);
					\node[] at (6.2,0) (3) {\scriptsize p};
					\draw[->] (5.9,0) -- (5.1,0);
				\end{tikzpicture}
		\end{center}}
		where we draw only the interesting arcs. Note that dashed arcs represent a value of $1/2$ while full arcs represent a value of $1$. If $k$ or $p$ do not have an inflow of $1$, we can just go backward until we find a point with this property. If we do not find it, we go backward to the root. At this point, we can do the same reasoning with the paths as we did above. 
	\end{enumerate}
\end{proof}
We now focus on a particular type of PHI vertices, namely spanning vertices such that every Steiner node has indegree exactly one. We conjecture that every PHI spanning vertex has this property. Further details can be found in \Cref{app:PHI-sp}.

We now derive some properties of these vertices that will be exploited in our heuristic search.
\begin{lemma}\label{lemma:num-edges}
	Let $x$ be a pure half-integer solution of $P_{CM}(n,t)$, $t \geq 3$ that is also a vertex of $P_{BCR}(n,t)$ and an optimum for a metric cost. Let $x$ be a spanning vertex such that every Steiner node has indegree $1$. Then, it holds that 
	\begin{itemize}
		\item[(1)] $|\{(i, j) \in A \; | \; x_{ij} > 0\}| = n+t-2$,
		\item[(2)] $3t-n-4 \geq 0$.
	\end{itemize}
\end{lemma}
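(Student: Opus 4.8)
The plan is to translate all three hypotheses into exact in- and out-flow values at every node, and then prove both statements by double counting the total flow $\sum_{(i,j)\in A} x_{ij}$, which equals $\tfrac12$ times the number of positive arcs since $x$ is pure half-integer. First I would record the consequences of the hypotheses. Because $x$ is PHI, every positive arc carries value exactly $\tfrac12$, so a node of indegree one (one entering arc) has $x(\delta^-(v))=\tfrac12$. For each non-root terminal $v$, taking $W=\{v\}$ in the cut constraint \eqref{eq:cm_subt} gives $x(\delta^-(v))\ge 1$, while \eqref{eq:cm_inflow} gives $x(\delta^-(v))\le 1$; hence $x(\delta^-(v))=1$ for all $v\in T\setminus\{r\}$. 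By \eqref{eq:cm_root_inflow} the root has $x(\delta^-(r))=0$, and by hypothesis every Steiner node has $x(\delta^-(v))=\tfrac12$.

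For statement (1), I would count the total inflow in two ways. Writing $N:=|\{(i,j)\in A : x_{ij}>0\}|$, each positive arc contributes $\tfrac12$ to the inflow of its head, so $\sum_{v\in V}x(\delta^-(v))=\tfrac{N}{2}$. On the other hand, summing the node values just established gives $\sum_{v\in V}x(\delta^-(v))=0+(t-1)\cdot 1+(n-t)\cdot\tfrac12$, since there are $t-1$ non-root terminals and $n-t$ Steiner nodes (the vertex is spanning, so every node contributes). Equating, $\tfrac{N}{2}=(t-1)+\tfrac{n-t}{2}$, i.e. $N=n+t-2$, which is (1).

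Statement (2) then follows from the analogous count of the total outflow together with (1). The total outflow also equals $\tfrac{N}{2}$. Each Steiner node satisfies $x(\delta^+(v))\ge 2x(\delta^-(v))=1$ by \eqref{eq:cm_in_out_flow}, and the root satisfies $x(\delta^+(r))\ge 1$ by taking $W=V\setminus\{r\}$ in \eqref{eq:cm_subt}, for which $\delta^-(W)=\delta^+(r)$; the remaining outflows are nonnegative. Hence $\tfrac{N}{2}=\sum_{v\in V}x(\delta^+(v))\ge 1+(n-t)$, so $N\ge 2(n-t+1)$. Combining with $N=n+t-2$ yields $n+t-2\ge 2n-2t+2$, which rearranges to $3t-n-4\ge 0$, proving (2).

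The arithmetic is routine once the flow values are fixed, so the main obstacle is the bookkeeping at the boundary: confirming that ``indegree one'' forces $x(\delta^-(v))=\tfrac12$ for Steiner nodes under pure half-integrality, that the cut constraint pins each terminal inflow to exactly $1$, and that the root outflow is at least $1$ (this last bound is what makes the estimate in (2) tight rather than the weaker $3t-n-2\ge 0$). Note that the hypothesis $t\ge 3$ is not needed for the arithmetic but guarantees the bound in (2) is non-vacuous, and that \Cref{lemma:oriented-cycles} is available although I do not expect to use it in this counting argument.
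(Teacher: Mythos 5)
Your proof is correct and takes essentially the same double-counting approach as the paper: part (1) counts the incoming arcs at each node (root contributes $0$, each non-root terminal $2$, each Steiner node $1$), and part (2) combines this with the lower bounds of $2$ on the root's out-degree and on each Steiner node's out-degree. The only cosmetic difference is that for (2) you sum out-flows while the paper sums total degrees; the two computations are equivalent since the in-flow contributions are already fixed by (1).
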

\begin{proof}
	For the first point, it suffices to count the incoming edges of each node. We have one incoming edge for each Steiner node and exactly two incoming edges for every terminal that is not the root since every terminal has an inflow exactly equal to one, and our edges have weights $1/2$. The total number of edges is $n-t + 2 (t- 1) = n+t-2$.
	
	For the second point, because of Constraint \eqref{eq:cm_subt}, we have that at least two edges exit from the root and at least two edges enter in every other terminal. Moreover, since in every Steiner node enters exactly one edge, at least two edges must come out. By counting all the edges mentioned in point 1 of this proposition, and considering the minimum number just discussed we then have that $(n+t-2) \geq \frac{2t + 3 (n-t)}{2}$ and so $3t-n-4 \geq 0$.
\end{proof}
The properties stated above represent the core of the heuristic we now present. We generate all of the non-isomorphic connected undirected graphs such that every node is of degree at least $2$ and with exactly $n+t-2$ edges with the command \texttt{geng} of nauty~\cite{mckay2014practical}. For every generated graph, we generate all the non-isomorphic orientation of the edges, that can only be oriented in one way because of \Cref{lemma:oriented-cycles}, and such that every node has a maximum indegree of $2$ since we have Constraint \eqref{eq:cm_inflow} and we are dealing with PHI solutions. This generation of digraphs can be done with the command \texttt{watercluster2} of nauty. The obtained digraphs can be mapped into a spanning PHI vertex of $P_{CM}(n,t)$ for every feasible case. In particular, we have to check that: (i) There exist exactly $n - t$ nodes with in-degree $1$ (Steiner nodes); (ii) There exists one node with in-degree $0$ (root); (iii) There exist exactly $t-1$ nodes of in-degree $2$ (terminals). We filter all the generated graphs for these properties and then check if the remaining ones are vertices of $P_{CM}(n,t)$. This procedure called $\text{PHI}(n,t)$, is illustrated in \Cref{alg:phi}.

\begin{observation}
	Note how the $\text{PHI}(n,t)$ can be generalized to vertex attaining values in the set $\{0, 1/m\}$ just by changing some values: the indegree of the terminal nodes must now be $m$, as well as the outdegree of the root, while the indegree of the Steiner nodes is again $1$. This gives us a total number of edges of $n+(m-1)\times t - m$. In addition, every node has degree at least $\min(3, m)$; if $m > 3$ the number of nodes with degree $3$ is at most $n-t$; there must exist one node of indegree $0$, $n-t$ nodes of indegree $1$, and $t-1$ nodes of indegree $m$. In \Cref{subsec:beyondphi}, the case of $m=4$ is discussed in more detail, with the algorithm for this particular case being presented in \Cref{app:POQ}.
\end{observation}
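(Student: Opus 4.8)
The plan is to prove this statement as a direct generalization of \Cref{lemma:num-edges}, replacing the edge weight $1/2$ by $1/m$ throughout, so that every active arc now carries flow $1/m$ and an in- or out-flow of $1$ corresponds to $m$ incident arcs rather than $2$. I would first pin down the \emph{exact} in-degree of each of the three vertex classes, since both the edge count and the degree bounds follow from this census. For the root $r$, Constraint \eqref{eq:cm_root_inflow} gives in-degree $0$. For a terminal $v \in T \setminus \{r\}$, taking $W = \{v\}$ in Constraint \eqref{eq:cm_subt} forces $x(\delta^-(v)) \geq 1$, while Constraint \eqref{eq:cm_inflow} forces $x(\delta^-(v)) \leq 1$; hence the in-flow is exactly $1$, and because every active arc has value $1/m$ this means exactly $m$ incoming arcs. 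Each Steiner node has in-degree $1$ by hypothesis. This establishes the in-degree census: one node (the root) of in-degree $0$, the $n-t$ Steiner nodes of in-degree $1$, and the $t-1$ non-root terminals of in-degree $m$. That no other node has in-degree $0$ uses that $x$ is a spanning vertex, hence connected by \Cref{lemma:cm_connected}, so every non-root node is reached.

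Summing in-degrees then gives the number of active arcs immediately, since each arc is counted once at its head:
\[
|\{(i,j) \in A : x_{ij} > 0\}| = 0 + (n-t)\cdot 1 + (t-1)\cdot m = n + (m-1)t - m,
\]
which is the claimed edge count and reduces to $n + t - 2$ when $m = 2$, recovering part (1) of \Cref{lemma:num-edges}.

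For the degree bounds I would turn to the out-degrees. A Steiner node has in-flow $1/m$, so Constraint \eqref{eq:cm_in_out_flow} forces out-flow $\geq 2/m$, i.e.\ at least two outgoing arcs; together with its single incoming arc this gives total degree $\geq 3$. The root has out-flow $\geq 1$ (take $W = V \setminus \{r\}$ in \eqref{eq:cm_subt}), hence out-degree $\geq m$, and each non-root terminal has degree $\geq m$ from its in-degree alone. Consequently every node has degree at least $\min(3, m)$: Steiner nodes supply the bound $3$, and the root and terminals supply the bound $m$. When $m > 3$, the root and every terminal have degree $\geq m > 3$, so a node of degree exactly $3$ can only be a Steiner node, of which there are $n-t$; this yields the degree-$3$ bound. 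The orientation statement (each edge admits a unique orientation) is the generalization of \Cref{lemma:oriented-cycles} and is needed only to justify treating the support as a simple oriented graph in the enumeration, not for the counting above.

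The one genuinely delicate point is the assertion that the root's out-degree equals $m$ \emph{exactly}, rather than merely $\geq m$. The lower bound $\geq m$ is all that the minimum-degree and degree-$3$ claims require, so those hold unconditionally; pinning the out-degree to exactly $m$ amounts to showing that the root's out-flow equals $1$. This mirrors the in-flow argument used for terminals, but — unlike the terminal case, where \eqref{eq:cm_inflow} supplies the matching upper bound — the CM constraints place no direct upper bound on $x(\delta^+(r))$, so I expect this to be the main obstacle. I would handle it either by imposing it as a filter in the generation step (exactly as the in-degree census is already imposed in the $m=2$ algorithm) or by an optimality argument showing that any excess root out-flow in a vertex optimal for a metric cost can be removed while preserving feasibility, thereby forcing out-flow $1$ at optimality.
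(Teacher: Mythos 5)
Your census argument is exactly the justification the paper intends for this observation: it is the direct generalization of the proof of \Cref{lemma:num-edges}, which likewise counts one incoming arc per Steiner node and $m$ (there $m=2$) per non-root terminal, sums in-degrees to get the arc count, and extracts the degree lower bounds from \eqref{eq:cm_root_inflow}, \eqref{eq:cm_subt}, \eqref{eq:cm_inflow} and \eqref{eq:cm_in_out_flow}. Your edge count $0+(n-t)\cdot 1+(t-1)\cdot m = n+(m-1)t-m$, the $\min(3,m)$ bound (Steiner: in-degree $1$ plus out-flow $\geq 2/m$, hence degree $\geq 3$; root and terminals: degree $\geq m$), and the conclusion that for $m>3$ only the $n-t$ Steiner nodes can have degree $3$ all match the paper's reasoning and the filters actually implemented in \Cref{alg:phi} and \Cref{alg:poq}.

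The one place you go astray is precisely the point you flag as delicate, the root's out-degree being \emph{exactly} $m$. Your instinct that the constraints only give out-flow $\geq 1$, hence out-degree $\geq m$, is correct — but both remedies you propose would be wrong. An optimality argument cannot force the root's out-flow down to $1$: the first $(7,4)$ vertex in \Cref{fig:7-4} (the wheel of \Cref{fig:oddwheel2}, with the root at the hub) is a PHI vertex whose root has out-degree $3$ with $m=2$, and it attains the gap $10/9$, i.e., it is optimal for a metric cost. For the same reason, adding an ``out-degree of the root equals $m$'' filter to the enumeration would incorrectly discard valid vertices such as this one; note that neither \Cref{alg:phi} nor \Cref{alg:poq} imposes any condition on out-degrees — they filter exclusively on the in-degree census (one node of in-degree $0$, $n-t$ of in-degree $1$, $t-1$ of in-degree $m$). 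The correct resolution is simply that the observation's clause about the root should be read as a lower bound, and, as your own computation shows, nothing in the statement — neither the edge count $n+(m-1)t-m$ nor any of the degree claims — depends on exactness, so the rest of your proof stands unconditionally.
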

\begin{algorithm}[t!]
	\caption{Pure half-integer vertices search}\label{alg:phi}
	\begin{algorithmic}
		\STATE{\textbf{procedure} PHI$(n,t)$}
		\STATE{$\mathbb{G} = \{G =(V,E) \; | \; G \text{ connected}, \; deg(i) \geq 2 \, \forall i \in V, \; |V| = n, \; |E| = n+t-2\}$}
		\STATE{$\text{di}\mathbb{G} = \emptyset$}
		\FOR{$G=(V,E) \in \mathbb{G}$}
		\IF{$|\{i \in V \; | \; deg(i) = 2 \}| \leq t$}
		\STATE{add to $\text{di}\mathbb{G}$ every non-isomorphic orientation of $G$ s.t.}
		\STATE{\hspace*{4.5em}$\cdot$ every edge can be oriented in only one way}
		\STATE{\hspace*{4.5em}$\cdot$ every node has a maximum indegree of $2$}
		\ENDIF
		\ENDFOR
		\STATE{$\mathcal{V} = \emptyset$}
		\FOR{$\text{di}G = (V, A) \in \text{di}\mathbb{G}$}
		\IF{$|\{i \in V \; | \; indeg(i) = 0 \}| = 1$}
		\IF{$|\{i \in V \; | \; indeg(i) = 1 \}| = n-t$}
		\IF{$|\{i \in V \; | \; indeg(i) = 2 \}| = t-1$}
		\STATE{$x_{ij} = 1/2 \text{ iff } (i,j) \in A$ is a solution of $P_{CM}(n,t)$ with}
		\STATE{\hspace*{7.5em}$\cdot$ $\{r\} = \{i \in V \; | \; indeg(i) = 0 \}$}
		\STATE{\hspace*{7.5em}$\cdot$ $V \setminus T = \{i \in V \; | \; indeg(i) = 1 \}$}
		\STATE{\hspace*{7.5em}$\cdot$ $T \setminus \{r\} = \{i \in V \; | \; indeg(i) = 2 \}$}
		\IF{$x$ is a feasible vertex of $P_{CM}(n,t)$}
		\STATE{add $x$ to $\mathcal{V}$}
		\ENDIF
		\ENDIF
		\ENDIF
		\ENDIF
		\ENDFOR
		\RETURN $\mathcal{V}$
	\end{algorithmic}
\end{algorithm}

\section{Computational results}\label{sec:results}
In this section, we aim to generate vertices of the $P_{CM}$ polytope and, for any vertex, evaluate the maximum integrality gap that can be attained at that vertex. Recall that vertices of the $P_{BCR}$ polytope that are feasible for the CM formulation are also vertices of the $P_{CM}$ polytope.

Specifically, our approach involves: (i) running the two proposed heuristics for small values of \( n \) to produce vertices of $P_{CM}$, (ii) computing the maximum integrality gap associated with these vertices by solving the Gap problem, and (iii) extending our analysis beyond PHI vertices.
\paragraph{Implementation details} All the tests are executed on a desktop computer with a CPU 13th Gen Intel(R) Core(TM) i5-13600 and 16 GB of RAM. All the functions have been implemented in Python. 
For the optimization tasks, we use the commercial solver Gurobi 11.0~\cite{gurobi}.

\subsection{Lower bounds for the integrality gap for \texorpdfstring{$\bm{n \leq 10}$}{n <= 10}}

Our first set of algorithmic experiments aims at generating nontrivial vertices of $P_{CM}$ having a large integrality gap.
\Cref{tab:OTC_vs_PHI} and \Cref{tab:PHI>8} present the lower bounds on the integrality gap and the number of nonintegral vertices we can compute with our two heuristics presented in \Cref{sec:vertex_enum}.
Recall that the vertices computed by the OTC procedures are filtered by isomorphism in post-processing. 
In the following paragraphs, we discuss the results for specific values of the number of vertices, starting from $n=6$. Recall that results for $n \leq 5$ are presented in \Cref{tab:vertices-dcut-cm}: we computed every vertex with Polymake, finding a maximum value of integrality gap equal to $1$. Further details can be found in \Cref{app:polymake}.

\paragraph{$(n = 6)$} For $n = 6$, for any value of $t \leq 3 \leq n - 1$, the best lower bound we compute is always equal to 1. We conjecture that for $n = 6$, the CM and the BCR formulation have a gap equal to 1. 

\paragraph{$(n = 7)$} For $ n = 7$, we compute four vertices attaining the gap of $\frac{10}{9}$ with the heuristic OTC; three of them belong to the same class of isomorphism. \Cref{fig:7-4} shows those four vertices, where \Cref{fig:oddwheel1,fig:oddwheel3,fig:oddwheel4} show the three isomorphic graphs.
Moreover, these vertices are PHI.
Although the directed support graph is the same for the four vertices, the arc orientation changes, and, in particular, the node labeled as ``root'' is different.
Note that the PHI heuristic can only find two of the four vertices since it can find only one vertex for every class of isomorphism of node-colored edge-weighted directed graphs.
On the contrary, the OTC heuristic may find more than one representative for the same class of isomorphism. 

\begin{figure}[t!]
	\centering
	\subfloat[t!][First vertex. \label{fig:oddwheel2}]{
		\scalebox{0.5}{\begin{tikzpicture}
				\draw[line width=3, color=mycol4] (0:0) -- (90:2.5) node [currarrow, pos=0.5, sloped] {};
				\draw[line width=3, color=mycol4] (0:0) -- (210:2.5) node [currarrow, pos=0.5, sloped, rotate=180] {};
				\draw[line width=3, color=mycol4] (0:0) -- (330:2.5) node [currarrow, pos=0.5, sloped] {};
				\draw[line width=3, color=mycol4] (30:2.5) -- (90:2.5) node [currarrow, pos=0.5, sloped] {};
				\draw[line width=3, color=mycol4] (90:2.5) -- (150:2.5) node [currarrow, pos=0.5, sloped, rotate=180] {};
				\draw[line width=3, color=mycol4] (150:2.5) -- (210:2.5) node [currarrow, pos=0.5, sloped, rotate=180] {};
				\draw[line width=3, color=mycol4] (210:2.5) -- (270:2.5) node [currarrow, pos=0.5, sloped] {};
				\draw[line width=3, color=mycol4] (270:2.5) -- (330:2.5) node [currarrow, pos=0.5, sloped, rotate=180] {};
				\draw[line width=3, color=mycol4] (330:2.5) -- (30:2.5) node [currarrow, pos=0.5, sloped] {};
				\fill[color=mycol1] (0:0) circle (0.2);
				\fill[color=white] (0:0) circle (0.08);
				\fill[color=mycol3] (30:2.5) circle (0.2);
				\fill[color=mycol3] (150:2.5) circle (0.2);
				\fill[color=mycol3] (270:2.5) circle (0.2);
				\fill[rotate around={90:(0,0)}, color=mycol2] (2.3, -0.2) rectangle (2.7, 0.2);
				\fill[rotate around={210:(0,0)}, color=mycol2] (2.3, -0.2) rectangle (2.7, 0.2);
				\fill[rotate around={330:(0,0)}, color=mycol2] (2.3, -0.2) rectangle (2.7, 0.2);
		\end{tikzpicture}}
	}
	\hspace*{0.1cm}
	\subfloat[t!][Second vertex.\label{fig:oddwheel1}]{
		\scalebox{0.5}{\begin{tikzpicture}
				\draw[line width=3, color=mycol4] (0:0) -- (90:2.5) node [currarrow, pos=0.5, sloped, rotate=180] {};
				\draw[line width=3, color=mycol4] (0:0) -- (210:2.5) node [currarrow, pos=0.5, sloped] {};
				\draw[line width=3, color=mycol4] (0:0) -- (330:2.5) node [currarrow, pos=0.5, sloped] {};
				\draw[line width=3, color=mycol4] (30:2.5) -- (90:2.5) node [currarrow, pos=0.5, sloped] {};
				\draw[line width=3, color=mycol4] (90:2.5) -- (150:2.5) node [currarrow, pos=0.5, sloped] {};
				\draw[line width=3, color=mycol4] (150:2.5) -- (210:2.5) node [currarrow, pos=0.5, sloped] {};
				\draw[line width=3, color=mycol4] (210:2.5) -- (270:2.5) node [currarrow, pos=0.5, sloped] {};
				\draw[line width=3, color=mycol4] (270:2.5) -- (330:2.5) node [currarrow, pos=0.5, sloped, rotate=180] {};
				\draw[line width=3, color=mycol4] (330:2.5) -- (30:2.5) node [currarrow, pos=0.5, sloped] {};
				\fill[color=mycol3] (0:0) circle (0.2);
				\fill[color=mycol3] (30:2.5) circle (0.2);
				\fill[color=mycol1] (150:2.5) circle (0.2);
				\fill[color=white] (150:2.5) circle (0.08);
				\fill[color=mycol3] (270:2.5) circle (0.2);
				\fill[rotate around={90:(0,0)}, color=mycol2] (2.3, -0.2) rectangle (2.7, 0.2);
				\fill[rotate around={210:(0,0)}, color=mycol2] (2.3, -0.2) rectangle (2.7, 0.2);
				\fill[rotate around={330:(0,0)}, color=mycol2] (2.3, -0.2) rectangle (2.7, 0.2);
	\end{tikzpicture}}} \hspace*{0.1cm}
	\subfloat[t!][Third vertex.\label{fig:oddwheel3}]{
		\scalebox{0.5}{\begin{tikzpicture}
				\draw[line width=3, color=mycol4] (0:0) -- (90:2.5) node [currarrow, pos=0.5, sloped, rotate=180] {};
				\draw[line width=3, color=mycol4] (0:0) -- (210:2.5) node [currarrow, pos=0.5, sloped, rotate=180] {};
				\draw[line width=3, color=mycol4] (0:0) -- (330:2.5) node [currarrow, pos=0.5, sloped, rotate=180] {};
				\draw[line width=3, color=mycol4] (30:2.5) -- (90:2.5) node [currarrow, pos=0.5, sloped, rotate=180] {};
				\draw[line width=3, color=mycol4] (90:2.5) -- (150:2.5) node [currarrow, pos=0.5, sloped, rotate=180] {};
				\draw[line width=3, color=mycol4] (150:2.5) -- (210:2.5) node [currarrow, pos=0.5, sloped, rotate=180] {};
				\draw[line width=3, color=mycol4] (210:2.5) -- (270:2.5) node [currarrow, pos=0.5, sloped] {};
				\draw[line width=3, color=mycol4] (270:2.5) -- (330:2.5) node [currarrow, pos=0.5, sloped, rotate=180] {};
				\draw[line width=3, color=mycol4] (330:2.5) -- (30:2.5) node [currarrow, pos=0.5, sloped, rotate=180] {};
				\fill[color=mycol3] (0:0) circle (0.2);
				\fill[color=mycol1] (30:2.5) circle (0.2);
				\fill[color=white] (30:2.5) circle (0.08);
				\fill[color=mycol3] (150:2.5) circle (0.2);
				\fill[color=mycol3] (270:2.5) circle (0.2);
				\fill[rotate around={90:(0,0)}, color=mycol2] (2.3, -0.2) rectangle (2.7, 0.2);
				\fill[rotate around={210:(0,0)}, color=mycol2] (2.3, -0.2) rectangle (2.7, 0.2);
				\fill[rotate around={330:(0,0)}, color=mycol2] (2.3, -0.2) rectangle (2.7, 0.2);
	\end{tikzpicture}}}\hspace*{0.1cm}
	\subfloat[t!][Fourth vertex.\label{fig:oddwheel4}]{
		\scalebox{0.5}{\begin{tikzpicture}
				\draw[line width=3, color=mycol4] (0:0) -- (90:2.5) node [currarrow, pos=0.5, sloped] {};
				\draw[line width=3, color=mycol4] (0:0) -- (210:2.5) node [currarrow, pos=0.5, sloped] {};
				\draw[line width=3, color=mycol4] (0:0) -- (330:2.5) node [currarrow, pos=0.5, sloped, rotate=180] {};
				\draw[line width=3, color=mycol4] (30:2.5) -- (90:2.5) node [currarrow, pos=0.5, sloped] {};
				\draw[line width=3, color=mycol4] (90:2.5) -- (150:2.5) node [currarrow, pos=0.5, sloped, rotate=180] {};
				\draw[line width=3, color=mycol4] (150:2.5) -- (210:2.5) node [currarrow, pos=0.5, sloped, rotate=180] {};
				\draw[line width=3, color=mycol4] (210:2.5) -- (270:2.5) node [currarrow, pos=0.5, sloped, , rotate=180] {};
				\draw[line width=3, color=mycol4] (270:2.5) -- (330:2.5) node [currarrow, pos=0.5, sloped] {};
				\draw[line width=3, color=mycol4] (330:2.5) -- (30:2.5) node [currarrow, pos=0.5, sloped] {};
				\fill[color=mycol3] (0:0) circle (0.2);
				\fill[color=mycol3] (30:2.5) circle (0.2);
				\fill[color=mycol3] (150:2.5) circle (0.2);
				\fill[color=mycol1] (270:2.5) circle (0.2);
				\fill[color=white] (270:2.5) circle (0.08);
				\fill[rotate around={90:(0,0)}, color=mycol2] (2.3, -0.2) rectangle (2.7, 0.2);
				\fill[rotate around={210:(0,0)}, color=mycol2] (2.3, -0.2) rectangle (2.7, 0.2);
				\fill[rotate around={330:(0,0)}, color=mycol2] (2.3, -0.2) rectangle (2.7, 0.2);
	\end{tikzpicture}}}
	\caption{Fractional vertices of $(7, 4)$, all with integrality gap $10/9$. Hollow circle: root; Circles: Terminals; Square: Steiner node. Note that the second, third, and fourth vertex belong to the same class of isomorphism, while the first one belongs to another class.}
	\label{fig:7-4}
\end{figure}

\paragraph{$(n = 8)$} The case $n = 8$ is more involved. 
The PHI does not find fractional vertices for the cases $t = 3, 4$. While the OTC does not find fractional vertices for $t=3$, it finds again the vertices of $(7,4)$ since it does not only find spanning vertices, as we already discussed. 
Both heuristics only find fractional vertices of integrality gap $1$ for the case $t=6$.
For $t=7$, only the PHI heuristic finds fractional vertices, all of them with an integrality gap of $1$. 
The most interesting case is $t=5$: the maximum integrality gap is depicted in \Cref{fig:max85} while different values of integrality gap are depicted in \Cref{fig:8-5-2} and \Cref{fig:8-5-3}. 
Note that the maximum integrality gap of this case for the PHI heuristic is $12/11$, while the maximum integrality gap for the OTC heuristic is again $10/9$: some of the vertices attaining this value are depicted in \Cref{fig:8-51}. 
Note also how these vertices can be obtained from vertices of $(7,4)$, as explained in \Cref{obs:adding-ones}.

\begin{figure}[t!]
	\centering
	\subfloat[t!][First vertex.\label{fig:8-51-2}]{
		\scalebox{0.5}{\begin{tikzpicture}
				\draw[line width=3, color=mycol4] (0:0) -- (90:2.5) node [currarrow, pos=0.5, sloped] {};
				\draw[line width=3, color=mycol4] (0:0) -- (210:2.5) node [currarrow, pos=0.5, sloped, rotate=180] {};
				\draw[line width=3, color=mycol4] (0:0) -- (330:2.5) node [currarrow, pos=0.5, sloped] {};
				\draw[line width=3, color=mycol4] (30:2.5) -- (90:2.5) node [currarrow, pos=0.5, sloped] {};
				\draw[line width=3, color=mycol4] (90:2.5) -- (150:2.5) node [currarrow, pos=0.5, sloped, rotate=180] {};
				\draw[line width=3, color=mycol4] (150:2.5) -- (210:2.5) node [currarrow, pos=0.5, sloped, rotate=180] {};
				\draw[line width=3, color=mycol4] (210:2.5) -- (270:2.5) node [currarrow, pos=0.5, sloped] {};
				\draw[line width=3, color=mycol4] (270:2.5) -- (330:2.5) node [currarrow, pos=0.5, sloped, rotate=180] {};
				\draw[line width=3, color=mycol4] (330:2.5) -- (30:2.5) node [currarrow, pos=0.5, sloped] {};
				\draw[line width=3, color=black] (30:2.5) -- (30:1.25) node [currarrow, pos=0.5, sloped, rotate=180] {};
				\fill[color=mycol1] (0:0) circle (0.2);
				\fill[color=white] (0:0) circle (0.08);
				\fill[color=mycol3] (30:2.5) circle (0.2);
				\fill[color=mycol3] (30:1.25) circle (0.2);
				\fill[color=mycol3] (150:2.5) circle (0.2);
				\fill[color=mycol3] (270:2.5) circle (0.2);
				\fill[rotate around={90:(0,0)}, color=mycol2] (2.3, -0.2) rectangle (2.7, 0.2);
				\fill[rotate around={210:(0,0)}, color=mycol2] (2.3, -0.2) rectangle (2.7, 0.2);
				\fill[rotate around={330:(0,0)}, color=mycol2] (2.3, -0.2) rectangle (2.7, 0.2);
	\end{tikzpicture}}}\hspace*{0.1cm}
	\subfloat[t!][Second vertex.\label{fig:8-51-1}]{
		\scalebox{0.5}{\begin{tikzpicture}
				\draw[line width=3, color=mycol4] (0:0) -- (90:2.5) node [currarrow, pos=0.5, sloped, rotate=180] {};
				\draw[line width=3, color=mycol4] (0:0) -- (210:2.5) node [currarrow, pos=0.5, sloped] {};
				\draw[line width=3, color=mycol4] (0:0) -- (330:2.5) node [currarrow, pos=0.5, sloped] {};
				\draw[line width=3, color=mycol4] (30:2.5) -- (90:2.5) node [currarrow, pos=0.5, sloped] {};
				\draw[line width=3, color=mycol4] (90:2.5) -- (150:2.5) node [currarrow, pos=0.5, sloped] {};
				\draw[line width=3, color=mycol4] (150:2.5) -- (210:2.5) node [currarrow, pos=0.5, sloped] {};
				\draw[line width=3, color=mycol4] (210:2.5) -- (270:2.5) node [currarrow, pos=0.5, sloped] {};
				\draw[line width=3, color=mycol4] (270:2.5) -- (330:2.5) node [currarrow, pos=0.5, sloped, rotate=180] {};
				\draw[line width=3, color=mycol4] (330:2.5) -- (30:2.5) node [currarrow, pos=0.5, sloped] {};
				\draw[line width=3, color=black] (0:0) -- (30:1.25) node [currarrow, pos=0.5, sloped] {};
				\fill[color=mycol3] (0:0) circle (0.2);
				\fill[color=mycol3] (30:2.5) circle (0.2);
				\fill[color=mycol3] (30:1.25) circle (0.2);
				\fill[color=mycol1] (150:2.5) circle (0.2);
				\fill[color=white] (150:2.5) circle (0.08);
				\fill[color=mycol3] (270:2.5) circle (0.2);
				\fill[rotate around={90:(0,0)}, color=mycol2] (2.3, -0.2) rectangle (2.7, 0.2);
				\fill[rotate around={210:(0,0)}, color=mycol2] (2.3, -0.2) rectangle (2.7, 0.2);
				\fill[rotate around={330:(0,0)}, color=mycol2] (2.3, -0.2) rectangle (2.7, 0.2);
	\end{tikzpicture}}} \hspace*{0.1cm}
	\subfloat[t!][Third vertex. \label{fig:8-51-3}]{
		\scalebox{0.5}{\begin{tikzpicture}
				\draw[line width=3, color=mycol4] (0:0) -- (90:2.5) node [currarrow, pos=0.5, sloped, rotate=180] {};
				\draw[line width=3, color=mycol4] (0:0) -- (210:2.5) node [currarrow, pos=0.5, sloped] {};
				\draw[line width=3, color=mycol4] (330:1.25) -- (330:2.5) node [currarrow, pos=0.5, sloped] {};
				\draw[line width=3, color=mycol4] (30:2.5) -- (90:2.5) node [currarrow, pos=0.5, sloped] {};
				\draw[line width=3, color=mycol4] (90:2.5) -- (150:2.5) node [currarrow, pos=0.5, sloped] {};
				\draw[line width=3, color=mycol4] (150:2.5) -- (210:2.5) node [currarrow, pos=0.5, sloped] {};
				\draw[line width=3, color=mycol4] (210:2.5) -- (270:2.5) node [currarrow, pos=0.5, sloped] {};
				\draw[line width=3, color=mycol4] (270:2.5) -- (330:2.5) node [currarrow, pos=0.5, sloped, rotate=180] {};
				\draw[line width=3, color=mycol4] (330:2.5) -- (30:2.5) node [currarrow, pos=0.5, sloped] {};
				\draw[line width=3, color=black] (0:0) -- (330:1.25) node [currarrow, pos=0.5, sloped] {};
				\fill[color=mycol3] (0:0) circle (0.2);
				\fill[color=mycol3] (30:2.5) circle (0.2);
				\fill[color=mycol3] (330:1.25) circle (0.2);
				\fill[color=mycol1] (150:2.5) circle (0.2);
				\fill[color=white] (150:2.5) circle (0.08);
				\fill[color=mycol3] (270:2.5) circle (0.2);
				\fill[rotate around={90:(0,0)}, color=mycol2] (2.3, -0.2) rectangle (2.7, 0.2);
				\fill[rotate around={210:(0,0)}, color=mycol2] (2.3, -0.2) rectangle (2.7, 0.2);
				\fill[rotate around={330:(0,0)}, color=mycol2] (2.3, -0.2) rectangle (2.7, 0.2);
		\end{tikzpicture}}
	} \hspace*{0.1cm}
	\subfloat[t!][Fourth vertex.\label{fig:8-51-4}]{
		\scalebox{0.5}{\begin{tikzpicture}
				\draw[line width=3, color=mycol4] (0:0) -- (90:2.5) node [currarrow, pos=0.5, sloped] {};
				\draw[line width=3, color=mycol4] (0:0) -- (210:2.5) node [currarrow, pos=0.5, sloped, rotate=180] {};
				\draw[line width=3, color=mycol4] (0:0) -- (330:2.5) node [currarrow, pos=0.5, sloped] {};
				\draw[line width=3, color=mycol4] (30:2.5) -- (90:2.5) node [currarrow, pos=0.5, sloped] {};
				\draw[line width=3, color=mycol4] (90:2.5) -- (150:2.5) node [currarrow, pos=0.5, sloped, rotate=180] {};
				\draw[line width=3, color=mycol4] (150:2.5) -- (210:2.5) node [currarrow, pos=0.5, sloped, rotate=180] {};
				\draw[line width=3, color=mycol4] (210:2.5) -- (270:2.5) node [currarrow, pos=0.5, sloped] {};
				\draw[line width=3, color=mycol4] (270:2.5) -- (330:2.5) node [currarrow, pos=0.5, sloped, rotate=180] {};
				\draw[line width=3, color=mycol4] (330:2.5) -- (30:2.5) node [currarrow, pos=0.5, sloped] {};
				\draw[line width=3, color=black] (30:1.25) -- (0:0) node [currarrow, pos=0.5, sloped, rotate=180] {};
				\fill[color=mycol3] (0:0) circle (0.2);
				\fill[color=mycol3] (30:2.5) circle (0.2);
				\fill[color=mycol1] (30:1.25) circle (0.2);
				\fill[color=white] (30:1.25) circle (0.08);
				\fill[color=mycol3] (150:2.5) circle (0.2);
				\fill[color=mycol3] (270:2.5) circle (0.2);
				\fill[rotate around={90:(0,0)}, color=mycol2] (2.3, -0.2) rectangle (2.7, 0.2);
				\fill[rotate around={210:(0,0)}, color=mycol2] (2.3, -0.2) rectangle (2.7, 0.2);
				\fill[rotate around={330:(0,0)}, color=mycol2] (2.3, -0.2) rectangle (2.7, 0.2);
	\end{tikzpicture}}}
	\caption{Fractional vertices of $(8, 5)$, all with integrality gap 10/9. Hollow circle: root. Circles: Terminals. Squares: Steiner nodes.}
	\label{fig:8-51}
\end{figure}

\paragraph{$(n = 9)$} For $n = 9$, we can only run the PHI heuristics, as the OTC heuristic runs out of memory. 
Note how no vertices for the case $t=3, 4$ are found, accordingly to the second point of \Cref{lemma:num-edges}, while for the cases $t=7, 8$ only vertices of integrality gap $1$ are found. 
The maximum values of the integrality gap for $t=5, 6$ are $10/9$ and $14/13$, as shown in \Cref{fig:max95} and \Cref{fig:max9-6}, respectively. 
Different values of integrality gap are depicted in \Cref{fig:max-and-diff-gaps}. Notice how all the non-trivial values of integrality gaps found for $n \leq 9$ are of the form $\frac{2m}{2m-1}$, with $m = 5, 6, 7, 8, 9, 10, 11, 12$.
\paragraph{$(n \geq 10)$} For $n \geq 10$, even the PHI heuristic shows its limits. 
For $t \in \{8, 9\}$, the heuristic did not terminate within a timelit of 80 hours. 
The most interesting case we face is $t=6$, where we found a vertex with an integrality gap of $19/18$. In this case, the value is of the form $\frac{2m+1}{2m}$, in contrast to what we found for $n \leq 9$.

For $n = 11$ and $n = 12$, we show that the cases $ t \leq 5$ did not lead to any feasible PHI vertex. 
Tests with larger values of $t$ were computationally currently untractable.

Note that no vertices with an integrality gap greater than $1$ were found for the case $t=3$. The exactness for this case is discussed in~\cite[Section~2.7.1]{vicari2020simplexbasedsteinertree}.

\begin{table}[t!]
\centering
\caption{PHI versus OTC heuristic. The third and sixth columns report the number of non-isomorphic vertices; the fourth and seventh columns report the maximum value of the gap obtained for these vertices; the fifth and eighth columns report the number of vertices attaining the maximum gap. 
}
\label{tab:OTC_vs_PHI}
\begin{tabular}{ccrrrrrr}
	\toprule
	&  & \multicolumn{3}{c}{PHI} & \multicolumn{3}{c}{OTC}\\
	$n$  &  $t$ & \# vert. & max gap & \shortstack{\# vert.\\max. gap} & \# vert. & max gap & \shortstack{\# vert.\\max. gap}\\ 
	\midrule
	\multirow{2}{*}{6} 
	& 4 & 1 & 1 &  1 & 0 & - & - \\
	& 5 & 7 & 1 & 7 &  0 & - & - \\
	\midrule
	\multirow{3}{*}{7} 
	& 4 & 2 & 10/9 & 2 & 11 & 10/9 & 2 \\
	& 5 & 46 & 1 & 46 & 19 & 1 & 19 \\
	& 6 & 71 & 1 & 71 & 8 & 1 & 8\\
	\midrule
	\multirow{4}{*}{8} 
	&  4 & 0  & -  &  - & 19  & 10/9  & 2\\
	&  5 & 89  & 12/11  & 15  & 195  &  10/9 & 14 \\
	&  6 &  1\,070 & 1  & 1\,070  & 239  & 1  & 239\\
	&  7 &  758 & 1  &  758 & 0  & -  & -\\
	\bottomrule
\end{tabular}
\end{table}
\begin{table}[t!]
\centering
\caption{Partial performances of the PHI heuristic for $n \geq 9$.}
\label{tab:PHI>8}
\begin{tabular}{ccrrr}
	\toprule
	$n$ & $t$  & \# vert. & max gap & \shortstack{\# vert.\\max. gap} \\ 
	\midrule
	\multirow{4}{*}{9}  &  5 &  64 & 10/9  & 12  \\
	&  6 &   4\,389 & 14/13  & 200  \\
	&  7 & 21\,121  &  1 &  21\,121 \\
	&  8 &  8\,987 & 1  &  8\,987 \\
	\midrule
	\multirow{3}{*}{10} &  5 &  15 & 10/9  &  7 \\
	&  6 & 7\,386  &  10/9 & 73 \\
	& 7  &  155\,120  & 16/15 & 2\,653 \\
	\bottomrule
\end{tabular}
\end{table}

\begin{figure}[t!]
\centering
\subfloat[t!][$(n,t)=(8,5)$, gap $12/11$.\label{fig:max85}]{
	\scalebox{0.5}{\begin{tikzpicture}
			\draw[line width=3, color=mycol4] (90:2.5) -- (0:2.165) node [currarrow, pos=0.5, sloped] {};
			\draw[line width=3, color=mycol4] (0:2.165) -- (270:1.25) node [currarrow, pos=0.5, sloped, rotate=180] {};
			\draw[line width=3, color=mycol4] (0:2.165) -- (270:2.5) node [currarrow, pos=0.5, sloped, rotate=180] {};
			\draw[line width=3, color=mycol4] (180:1) -- (270:1.25) node [currarrow, pos=0.5, sloped] {};
			\draw[line width=3, color=mycol4] (180:1) -- (180:2.165) node [currarrow, pos=0.5, sloped, rotate=180] {};
			\draw[line width=3, color=mycol4] (180:2.165) -- (270:2.5) node [currarrow, pos=0.5, sloped] {};
			\draw[line width=3, color=mycol4] (180:2.165) -- (90:2.5) node [currarrow, pos=0.5, sloped] {};
			\draw[line width=3, color=mycol4] (180:2.165) -- (90:1.25) node [currarrow, pos=0.5, sloped] {};
			\draw[line width=3, color=mycol4] (270:1.25) -- (0:1.3) node [currarrow, pos=0.5, sloped] {};
			\draw[line width=3, color=mycol4] (0:1.3) -- (90:2.5) node [currarrow, pos=0.5, sloped, rotate=180] {};
			\draw[line width=3, color=mycol4] (0:1.3) -- (90:1.25) node [currarrow, pos=0.5, sloped, rotate=180] {};
			\fill[color=mycol3] (90:2.5) circle (0.2);
			\fill[color=mycol3] (270:2.5) circle (0.2);
			\fill[color=mycol3] (90:1.25) circle (0.2);
			\fill[color=mycol3] (270:1.25) circle (0.2);
			\fill[color=mycol1] (180:1) circle (0.2);
			\fill[color=white] (180:1) circle (0.08);
			\fill[rotate around={180:(0,0)}, color=mycol2] (1.965, -0.2) rectangle (2.365, 0.2);
			\fill[rotate around={0:(0,0)}, color=mycol2] (1.965, -0.2) rectangle (2.365, 0.2);
			\fill[rotate around={0:(0,0)}, color=mycol2] (1.1, -0.2) rectangle (1.5, 0.2);
\end{tikzpicture}}} \hspace*{1cm}
\subfloat[t!][$(n,t)=(9,5)$, gap $10/9$.\label{fig:max95}]{
	\scalebox{0.5}{\begin{tikzpicture}
			\draw[line width=3, color=mycol4] (90:1.2) -- (90:2.5) node [currarrow, pos=0.5, sloped, rotate=180] {};
			\draw[line width=3, color=mycol4] (210:2.5) -- (210:1.2) node [currarrow, pos=0.5, sloped] {};
			\draw[line width=3, color=mycol4] (90:1.2) -- (330:2.5) node [currarrow, pos=0.5, sloped] {};
			\draw[line width=3, color=mycol4] (90:1.2) -- (330:1.2) node [currarrow, pos=0.5, sloped] {};
			\draw[line width=3, color=mycol4] (210:1.2) -- (330:1.2) node [currarrow, pos=0.5, sloped] {};
			\draw[line width=3, color=mycol4] (210:1.2) -- (90:1.2) node [currarrow, pos=0.5, sloped] {};
			\draw[line width=3, color=mycol4] (30:2.5) -- (90:2.5) node [currarrow, pos=0.5, sloped] {};
			\draw[line width=3, color=mycol4] (90:2.5) -- (150:2.5) node [currarrow, pos=0.5, sloped] {};
			\draw[line width=3, color=mycol4] (150:2.5) -- (210:2.5) node [currarrow, pos=0.5, sloped] {};
			\draw[line width=3, color=mycol4] (210:2.5) -- (270:2.5) node [currarrow, pos=0.5, sloped] {};
			\draw[line width=3, color=mycol4] (270:2.5) -- (330:2.5) node [currarrow, pos=0.5, sloped, rotate=180] {};
			\draw[line width=3, color=mycol4] (330:2.5) -- (30:2.5) node [currarrow, pos=0.5, sloped] {};
			\fill[color=mycol3] (90:1.2) circle (0.2);
			\fill[color=mycol3] (30:2.5) circle (0.2);
			\fill[color=mycol1] (150:2.5) circle (0.2);
			\fill[color=white] (150:2.5) circle (0.08);
			\fill[color=mycol3] (270:2.5) circle (0.2);
			\fill[color=mycol3] (330:1.2) circle (0.2);
			\fill[rotate around={90:(0,0)}, color=mycol2] (2.3, -0.2) rectangle (2.7, 0.2);
			\fill[rotate around={210:(0,0)}, color=mycol2] (2.3, -0.2) rectangle (2.7, 0.2);
			\fill[rotate around={330:(0,0)}, color=mycol2] (2.3, -0.2) rectangle (2.7, 0.2);
			\fill[rotate around={210:(0,0)}, color=mycol2] (1.0, -0.2) rectangle (1.4, 0.2);
\end{tikzpicture}}} \hspace*{1cm}
\subfloat[t!][$(n,t)=(9,6)$, gap $14/13$. \label{fig:max9-6}]{
	\scalebox{0.5}{\begin{tikzpicture}
			\draw[line width=3, color=mycol4] (0:0) -- (0:2.165) node [currarrow, pos=0.5, sloped] {};
			\draw[line width=3, color=mycol4] (0:0) -- (180:2.165) node [currarrow, pos=0.5, sloped, rotate=180] {};
			\draw[line width=3, color=mycol4] (180:2.165) -- (0, 1.25) node [currarrow, pos=0.5, sloped] {};
			\draw[line width=3, color=mycol4] (180:2.165) -- (-1.0825, 1.25) node [currarrow, pos=0.5, sloped] {};
			\draw[line width=3, color=mycol4] (180:2.165) -- (0, -1.25) node [currarrow, pos=0.5, sloped] {};
			\draw[line width=3, color=mycol4] (180:2.165) -- (0, -2.5) node [currarrow, pos=0.5, sloped] {};
			\draw[line width=3, color=mycol4] (0:2.165) -- (0, 1.25) node [currarrow, pos=0.5, sloped, rotate=180] {};
			\draw[line width=3, color=mycol4] (0:2.165) -- (1.0825, 1.25) node [currarrow, pos=0.5, sloped, rotate=180] {};
			\draw[line width=3, color=mycol4] (0:2.165) -- (0, -1.25) node [currarrow, pos=0.5, sloped, rotate=180] {};
			\draw[line width=3, color=mycol4] (0:2.165) -- (0, -2.5) node [currarrow, pos=0.5, sloped, rotate=180] {};
			\draw[line width=3, color=mycol4] (0, 1.25) -- (0, 2.5) node [currarrow, pos=0.5, sloped] {};
			\draw[line width=3, color=mycol4] (0, 2.5) -- (1.0825, 1.25) node [currarrow, pos=0.5, sloped] {};
			\draw[line width=3, color=mycol4] (0, 2.5) -- (-1.0825, 1.25) node [currarrow, pos=0.5, sloped, rotate=180] {};
			\fill[color=mycol3] (270:2.5) circle (0.2);
			\fill[color=mycol3] (1.0825, 1.25) circle (0.2);
			\fill[color=mycol3] (-1.0825, 1.25) circle (0.2);
			\fill[color=mycol3] (90:1.25) circle (0.2);
			\fill[color=mycol3] (270:1.25) circle (0.2);
			\fill[color=mycol1] (0:0) circle (0.2);
			\fill[color=white] (0:0) circle (0.08);
			\fill[rotate around={180:(0,0)}, color=mycol2] (1.965, -0.2) rectangle (2.365, 0.2);
			\fill[rotate around={0:(0,0)}, color=mycol2] (1.965, -0.2) rectangle (2.365, 0.2);
			\fill[color=mycol2] (-0.2, 2.3) rectangle (0.2, 2.7);
	\end{tikzpicture}}
}\\
\subfloat[t!][$(n,t)=(8,5)$, \\gap $14/13$.\label{fig:8-5-2}]{
	\scalebox{0.5}{\begin{tikzpicture}
			\draw[line width=3, color=mycol4] (0:1) -- (0:2.165) node [currarrow, pos=0.5, sloped] {};
			\draw[line width=3, color=mycol4] (0:1) -- (0:0) node [currarrow, pos=0.5, sloped, rotate=180] {};
			\draw[line width=3, color=mycol4] (0:2.165) -- (90:2.5) node [currarrow, pos=0.5, sloped, rotate=180] {};
			\draw[line width=3, color=mycol4] (0:2.165) -- (270:2.5) node [currarrow, pos=0.5, sloped, rotate=180] {};
			\draw[line width=3, color=mycol4] (0:0) -- (90:1.25) node [currarrow, pos=0.5, sloped] {};
			\draw[line width=3, color=mycol4] (0:0) -- (270:1.25) node [currarrow, pos=0.5, sloped] {};
			\draw[line width=3, color=mycol4] (270:1.25) -- (270:2.5) node [currarrow, pos=0.5, sloped] {};
			\draw[line width=3, color=mycol4] (270:2.5) -- (180:2.165) node [currarrow, pos=0.5, sloped, rotate=180] {};
			\draw[line width=3, color=mycol4] (180:2.165) -- (270:1.25) node [currarrow, pos=0.5, sloped] {};
			\draw[line width=3, color=mycol4] (180:2.165) -- (90:1.25) node [currarrow, pos=0.5, sloped] {};
			\draw[line width=3, color=mycol4] (180:2.165) -- (90:2.5) node [currarrow, pos=0.5, sloped] {};
			\fill[color=mycol3] (90:2.5) circle (0.2);
			\fill[color=mycol3] (270:2.5) circle (0.2);
			\fill[color=mycol3] (90:1.25) circle (0.2);
			\fill[color=mycol3] (270:1.25) circle (0.2);
			\fill[color=mycol1] (0:1) circle (0.2);
			\fill[color=white] (0:1) circle (0.08);
			\fill[rotate around={180:(0,0)}, color=mycol2] (1.965, -0.2) rectangle (2.365, 0.2);
			\fill[rotate around={0:(0,0)}, color=mycol2] (1.965, -0.2) rectangle (2.365, 0.2);
			\fill[rotate around={0:(0,0)}, color=mycol2] (-0.2, -0.2) rectangle (0.2, 0.2);
	\end{tikzpicture}}
} \hspace*{1cm}
\subfloat[t!][$(n,t)=(8,5)$, \\gap $18/17$. \label{fig:8-5-3}]{
	\scalebox{0.5}{\begin{tikzpicture}
			\draw[line width=3, color=mycol4] (0:0) -- (90:1.25) node [currarrow, pos=0.5, sloped] {};
			\draw[line width=3, color=mycol4] (0:0) -- (210:1.25) node [currarrow, pos=0.5, sloped, rotate=180] {};
			\draw[line width=3, color=mycol4] (0:0) -- (330:2.5) node [currarrow, pos=0.5, sloped] {};
			\draw[line width=3, color=mycol4] (90:1.25) -- (90:2.5)  node [currarrow, pos=0.5, sloped] {};
			\draw[line width=3, color=mycol4] (210:1.25) -- (210:2.5) node [currarrow, pos=0.5, sloped, rotate=180] {};
			\draw[line width=3, color=mycol4] (330:2.5) -- (30:2.5) node [currarrow, pos=0.5, sloped] {};
			\draw[line width=3, color=mycol4] (330:2.5) -- (270:2.5) node [currarrow, pos=0.5, sloped, rotate=180] {};
			\draw[line width=3, color=mycol4] (90:2.5) -- (30:2.5) node [currarrow, pos=0.5, sloped] {};
			\draw[line width=3, color=mycol4] (210:2.5) -- (270:2.5) node [currarrow, pos=0.5, sloped, rotate=180] {};
			\draw[line width=3, color=mycol4] (90:2.5) -- (210:1.25) node [currarrow, pos=0.5, sloped, rotate=180] {};
			\draw[line width=3, color=mycol4] (210:2.5) -- (90:1.25) node [currarrow, pos=0.5, sloped] {};
			\fill[color=mycol1] (0:0) circle (0.2);
			\fill[color=white] (0:0) circle (0.08);
			\fill[color=mycol3] (30:2.5) circle (0.2);
			\fill[color=mycol3] (90:1.25) circle (0.2);
			\fill[color=mycol3] (210:1.25) circle (0.2);
			\fill[color=mycol3] (270:2.5) circle (0.2);
			\fill[rotate around={90:(0,0)}, color=mycol2] (2.3, -0.2) rectangle (2.7, 0.2);
			\fill[rotate around={210:(0,0)}, color=mycol2] (2.3, -0.2) rectangle (2.7, 0.2);
			\fill[rotate around={330:(0,0)}, color=mycol2] (2.3, -0.2) rectangle (2.7, 0.2);
\end{tikzpicture}}}
\hspace*{1cm}
\subfloat[t][$(n,t)=(9,6)$, \\gap $16/15$.]{
	\scalebox{0.5}{\begin{tikzpicture}
			\draw[line width=3, color=mycol4] (0:1) -- (0:2.165) node [currarrow, pos=0.5, sloped] {};
			\draw[line width=3, color=mycol4] (0:1) -- (0:0) node [currarrow, pos=0.5, sloped, rotate=180] {};
			\draw[line width=3, color=mycol4] (0:2.165) -- (300:1.25) node [currarrow, pos=0.5, sloped, rotate=180] {};
			\draw[line width=3, color=mycol4] (0:2.165) -- (90:2.5) node [currarrow, pos=0.5, sloped, rotate=180] {};
			\draw[line width=3, color=mycol4] (0:2.165) -- (270:2.5) node [currarrow, pos=0.5, sloped, rotate=180] {};
			\draw[line width=3, color=mycol4] (0:0) -- (150:1.25) node [currarrow, pos=0.5, sloped, rotate=180] {};
			\draw[line width=3, color=mycol4] (0:0) -- (240:1.25) node [currarrow, pos=0.5, sloped, rotate=180] {};
			\draw[line width=3, color=mycol4] (180:2.165) -- (270:2.5) node [currarrow, pos=0.5, sloped] {};
			\draw[line width=3, color=mycol4] (180:2.165) -- (240:1.25) node [currarrow, pos=0.5, sloped] {};
			\draw[line width=3, color=mycol4] (90:2.5) -- (150:1.25) node [currarrow, pos=0.5, sloped, rotate=180] {};
			\draw[line width=3, color=mycol4] (0:0) -- (90:2.5) node [currarrow, pos=0.5, sloped, rotate=180] {};
			\draw[line width=3, color=mycol4] (180:2.165) -- (90:2.5) node [currarrow, pos=0.5, sloped] {};
			\draw[line width=3, color=mycol4] (240:1.25) -- (300:1.25) node [currarrow, pos=0.5, sloped] {};
			\fill[color=mycol3] (90:2.5) circle (0.2);
			\fill[color=mycol3] (270:2.5) circle (0.2);
			\fill[color=mycol3] (150:1.25) circle (0.2);
			\fill[color=mycol3] (240:1.25) circle (0.2);
			\fill[color=mycol3] (300:1.25) circle (0.2);
			\fill[color=mycol1] (0:1) circle (0.2);
			\fill[color=white] (0:1) circle (0.08);
			\fill[rotate around={180:(0,0)}, color=mycol2] (1.965, -0.2) rectangle (2.365, 0.2);
			\fill[rotate around={0:(0,0)}, color=mycol2] (1.965, -0.2) rectangle (2.365, 0.2);
			\fill[rotate around={0:(0,0)}, color=mycol2] (-0.2, -0.2) rectangle (0.2, 0.2);
\end{tikzpicture}}}
\\
\subfloat[t][$(n,t)=(9,6)$, \\gap $20/19$.]{
	\scalebox{0.5}{\begin{tikzpicture}
			\draw[line width=3, color=mycol4] (150:2.5) -- (90:2.5) node [currarrow, pos=0.5, sloped] {};
			\draw[line width=3, color=mycol4] (150:2.5) -- (150:0) node [currarrow, pos=0.5, sloped] {};
			\draw[line width=3, color=mycol4] (150:2.5) -- (210:2.5) node [currarrow, pos=0.5, sloped] {};
			\draw[line width=3, color=mycol4] (150:2.5) -- (30:2.5) node [currarrow, pos=0.5, sloped] {};
			\draw[line width=3, color=mycol4] (30:2.5) -- (90:2.5) node [currarrow, pos=0.5, sloped, rotate=180] {};
			\draw[line width=3, color=mycol4] (30:2.5) -- (30:1.25) node [currarrow, pos=0.5, sloped, rotate=180] {};
			\draw[line width=3, color=mycol4] (30:2.5) -- (330:2.5) node [currarrow, pos=0.5, sloped] {};
			\draw[line width=3, color=mycol4] (270:2.5) -- (210:2.5) node [currarrow, pos=0.5, sloped, rotate=180] {};
			\draw[line width=3, color=mycol4] (330:1.25) -- (30:1.25) node [currarrow, pos=0.5, sloped] {};
			\draw[line width=3, color=mycol4] (330:1.25) -- (150:0) node [currarrow, pos=0.5, sloped, rotate=180] {};
			\draw[line width=3, color=mycol4] (210:2.5) -- (330:1.25) node [currarrow, pos=0.5, sloped] {};
			\draw[line width=3, color=mycol4] (270:2.5) -- (330:2.5) node [currarrow, pos=0.5, sloped] {};
			\draw[line width=3, color=mycol4] (150:0) -- (270:2.5) node [currarrow, pos=0.5, sloped] {};
			\fill[color=mycol3] (150:0) circle (0.2);
			\fill[color=mycol3] (30:1.25) circle (0.2);
			\fill[color=mycol3] (90:2.5) circle (0.2);
			\fill[color=mycol3] (330:2.5) circle (0.2);
			\fill[color=mycol1] (150:2.5) circle (0.2);
			\fill[color=white] (150:2.5) circle (0.08);
			\fill[color=mycol3] (210:2.5) circle (0.2);
			\fill[rotate around={30:(0,0)}, color=mycol2] (2.3, -0.2) rectangle (2.7, 0.2);
			\fill[rotate around={270:(0,0)}, color=mycol2] (2.3, -0.2) rectangle (2.7, 0.2);
			\fill[rotate around={330:(0,0)}, color=mycol2] (1.05, -0.2) rectangle (1.45, 0.2);
\end{tikzpicture}}}
\hspace*{1cm}
\subfloat[t][$(n,t)=(9,6)$, \\gap $22/21$.]{
	\scalebox{0.5}{\begin{tikzpicture}
			\draw[line width=3, color=mycol4] (150:2.5) -- (90:2.5) node [currarrow, pos=0.5, sloped] {};
			\draw[line width=3, color=mycol4] (30:2.5) -- (90:2.5) node [currarrow, pos=0.5, sloped, rotate=180] {};
			\draw[line width=3, color=mycol4] (150:2.5) -- (210:2.5) node [currarrow, pos=0.5, sloped] {};
			\draw[line width=3, color=mycol4] (180:1.25) -- (240:1.25) node [currarrow, pos=0.5, sloped] {};
			\draw[line width=3, color=mycol4] (180:1.25) -- (30:2.5) node [currarrow, pos=0.5, sloped] {};
			\draw[line width=3, color=mycol4] (240:1.25) -- (210:2.5) node [currarrow, pos=0.5, sloped, rotate=180] {};
			\draw[line width=3, color=mycol4] (240:1.25) -- (0:1.25) node [currarrow, pos=0.5, sloped] {};
			\draw[line width=3, color=mycol4] (0:1.25) -- (150:2.5) node [currarrow, pos=0.5, sloped, rotate=180] {};
			\draw[line width=3, color=mycol4] (0:1.25) -- (330:2.5) node [currarrow, pos=0.5, sloped] {};
			\draw[line width=3, color=mycol4] (270:2.5) -- (0:1.25) node [currarrow, pos=0.5, sloped] {};
			\draw[line width=3, color=mycol4] (330:2.5) -- (270:2.5) node [currarrow, pos=0.5, sloped, rotate=180] {};
			\draw[line width=3, color=mycol4] (210:2.5) -- (270:2.5) node [currarrow, pos=0.5, sloped] {};
			\draw[line width=3, color=mycol4] (30:2.5) -- (330:2.5) node [currarrow, pos=0.5, sloped] {};
			\fill[color=mycol3] (90:2.5) circle (0.2);
			\fill[color=mycol3] (210:2.5) circle (0.2);
			\fill[color=mycol3] (330:2.5) circle (0.2);
			\fill[color=mycol1] (180:1.25) circle (0.2);
			\fill[color=white] (180:1.25) circle (0.08);
			\fill[color=mycol3] (270:2.5) circle (0.2);
			\fill[color=mycol3] (0:1.25) circle (0.2);
			\fill[rotate around={150:(0,0)}, color=mycol2] (2.3, -0.2) rectangle (2.7, 0.2);
			\fill[rotate around={240:(0,0)}, color=mycol2] (1.05, -0.2) rectangle (1.45, 0.2);
			\fill[rotate around={30:(0,0)}, color=mycol2] (2.3, -0.2) rectangle (2.7, 0.2);
\end{tikzpicture}}}
\hspace*{1cm}
\subfloat[t][$(n,t)=(9,6)$, \\gap $24/23$.]{
	\scalebox{0.5}{\begin{tikzpicture}
			\draw[line width=3, color=mycol4] (210:1.25) -- (330:2.5) node [currarrow, pos=0.5, sloped] {};
			\draw[line width=3, color=mycol4] (210:1.25) -- (150:2.5) node [currarrow, pos=0.5, sloped, rotate=180] {};
			\draw[line width=3, color=mycol4] (150:2.5) -- (210:2.5) node [currarrow, pos=0.5, sloped] {};
			\draw[line width=3, color=mycol4] (150:2.5) -- (90:2.5) node [currarrow, pos=0.5, sloped] {};
			\draw[line width=3, color=mycol4] (150:2.5) -- (150:1.25) node [currarrow, pos=0.5, sloped] {};
			\draw[line width=3, color=mycol4] (150:1.25) -- (0:0) node [currarrow, pos=0.5, sloped] {};
			\draw[line width=3, color=mycol4] (30:2.5) -- (150:1.25) node [currarrow, pos=0.5, sloped, rotate=180] {};
			\draw[line width=3, color=mycol4] (0:0) -- (90:2.5) node [currarrow, pos=0.5, sloped] {};
			\draw[line width=3, color=mycol4] (30:2.5) -- (90:2.5) node [currarrow, pos=0.5, sloped] {};
			\draw[line width=3, color=mycol4] (0:0) -- (270:2.5) node [currarrow, pos=0.5, sloped] {};
			\draw[line width=3, color=mycol4] (330:2.5) -- (30:2.5) node [currarrow, pos=0.5, sloped] {};
			\draw[line width=3, color=mycol4] (330:2.5) -- (270:2.5) node [currarrow, pos=0.5, sloped, rotate=180] {};
			\draw[line width=3, color=mycol4] (270:2.5) -- (210:2.5) node [currarrow, pos=0.5, sloped, rotate=180] {};
			\fill[color=mycol3] (30:2.5) circle (0.2);
			\fill[color=mycol3] (90:2.5) circle (0.2);
			\fill[color=mycol3] (210:2.5) circle (0.2);
			\fill[color=mycol1] (210:1.25) circle (0.2);
			\fill[color=white] (210:1.25) circle (0.08);
			\fill[color=mycol3] (150:1.25) circle (0.2);
			\fill[color=mycol3] (270:2.5) circle (0.2);
			\fill[rotate around={0:(0,0)}, color=mycol2] (-0.2, -0.2) rectangle (0.2, 0.2);
			\fill[rotate around={150:(0,0)}, color=mycol2] (2.3, -0.2) rectangle (2.7, 0.2);
			\fill[rotate around={330:(0,0)}, color=mycol2] (2.3, -0.2) rectangle (2.7, 0.2);
\end{tikzpicture}}}

\caption{Fractional vertices of different gaps for different values of $(n,t)$. The first three vertices attain the maximum gap for their respective value of $(n,t)$.}
\label{fig:max-and-diff-gaps}
\end{figure}%

\subsection{A comparison between the two proposed heuristics}
\label{subsec:OTCvsPHI}
In this subsection, we discuss an in-depth comparison between the PHI and OTC heuristics.
First, notice how neither of the two are exhaustive algorithms: at least one vertex can be found by the OTC heuristic but not by the PHI heuristic, and vice versa (see Figures \Cref{fig:8-51-1} and \Cref{obs:1-2-not-exha}).
While the PHI heuristic is tailored for vertices with particular values, and so with a particular structure, the OTC is general enough to find different types of vertices; moreover, it remains an open question whether the heuristic becomes an exhaustive search by dropping the connectivity constraint.  

Computationally, the OTC heuristic is highly demanding, even when limited to generating only connected graphs. For each generated graph, all possible assignments of the root, terminal nodes, and Steiner nodes must be considered, and an LP must be solved for every assignment.
Moreover, there is no guarantee that the solution to the LP will be fractional; in fact, it may correspond to an equivalent integer solution.
In addition, the OTC heuristic does not ensure that the generated solutions are non-isomorphic, necessitating a post-processing step to filter out isomorphic graphs based on node-colored edge-weighted graph isomorphism.
The algorithm does not even guarantee finding spanning vertices. 
The PHI heuristic doesn't generate isomorphic graphs, and hence, every vertex generated belongs to a unique class of isomorphism. 
In addition, no LP needs to be solved since, given the orientation of the arcs, the role of every node is uniquely determined.
Lastly, note that in the OTC heuristic, we have applied the extra bounds on the number of edges $n\cdot t - t^2$ derived after a first set of computational experiments. Without this hypothesis, OTC is untractable for $n \geq 8$. 
The results of \Cref{tab:OTC_vs_PHI} are obtained with this extra constraint. 
Note also that, even with the aforementioned bound on the number of edges, OTC is untractable for $ n \geq 9$.

\subsection{Beyond pure half-integer vertices}\label{subsec:beyondphi}
The computational results of the previous subsection show that the PHI heuristic is better than OTC in finding interesting vertices of the CM polytope. 
In addition, the PHI heuristic can be extended to enumerate all the vertices of the type $\{0, 1/m\}$, $m \in \mathbb{N}_{\geq 3}$.
For example, an interesting case is $m = 4$, namely, when the vertices take value only in the set $\{0, 1/4\}$.
Let us call these vertices \emph{pure one-quarter} (POQ) vertices.
In~\cite{konemann2011partitionbased}, the authors show that the integrality gap of the BCR formulation is at least $\frac{8}{7}$ by exposing an instance leading to such a gap. 
The instance has 15 nodes and 8 terminals.
The optimal vertex is of POQ type, and it originated from a personal communication between Martin Skutella and the authors of~\cite{konemann2011partitionbased}. This makes POQ vertices particularly relevant for our study.
\Cref{fig:15-8} shows Skutella's graph.
Note that solving the Gap function for the CM formulation leads to a gap equal to $\frac{8}{7}$. 
Hence, the maximum integrality gap for Skutella's graph is exactly $\frac{8}{7}$. 

We defined a modified version of the PHI algorithm to find POQ vertices (for further details, see \Cref{app:POQ}), but we were not able to find any vertex with the above properties before the computation became intractable, that is $n \geq 8$.

\section{Conclusion and future works}
In this paper, we have studied the metric STP on graphs, focusing on computing lower bounds for the integrality gap for the BCR and the CM formulations. 
We introduced a novel ILP formulation, the Complete Metric (CM) model, tailored for the complete metric Steiner tree problem. This formulation overcomes the limitations of the BCR formulation in the metric case.
For the CM formulation, we prove several structural properties of the polytope associated with its natural LP relaxation.

The core of our contribution presented in this paper is extending the Gap problem introduced in~\cite{boyd2005computing} for the symmetric TSP, to the metric Steiner tree problem. This approach relies entirely on the enumeration of the vertices of the polytope. However, this task is significantly more challenging in the case of the Steiner Tree Problem compared to the TSP. Therefore, after restricting the search space and establishing certain properties of the relevant vertices in the STP, we designed two heuristics for partial vertices enumeration.
Our heuristics outperform the exact method obtained as a natural extension of~\cite{boyd2005computing} to the Steiner problem and can generate nontrivial vertices for $n$ up to 10. Note that exact methods got stuck already for $n = 6$.

We compare the performances of the two heuristics and their impact on providing insights into the exact value of the integrality gap. Although we cannot improve the bound of $\frac{10}{9}$ with $n \leq 10$, we find different structures of vertices leading to non-trivial gaps.
By directly exploring vertices similar to those yielding the highest gaps for $n > 10$, we observed that these structures cannot be present for small values of $n$. 
Hence, we conjecture that with $n \leq 10$, the highest gap is $\frac{10}{9}$.

We retain that our study raises several interesting research questions. First, can an ad-hoc branching procedure be designed for the CM formulation? Second, can we improve the OTC heuristic by reducing the number of combinations we have to analyze without losing any of the outputs? Third, can we prove any further characterization of the vertices that reduce the effort for Polymake, similarly to what has been done in~\cite{benoit2008finding,boyd2007structure,elliott2008integrality}? Lastly, can we enhance the design and implementation of the POQ heuristic to explore whether new lower bounds for the integrality gap are achievable for this type of vertices in higher dimensions?

We conclude this paper with two conjectures:
first, we conjecture that our OTC procedure, without the restriction on connectedness and the bound on the number of edges, is exhaustive and, hence, every vertex of $P_{CM}(n, t)$ can be obtained as an optimal solution of a $\{1, 2\}$-cost instance. 
Second, we conjecture that every spanning vertex $x$ of $P_{CM}(n, t)$ with $x_{ij} \in \{0, 1/m\}$, $m \geq 2$, has an in-degree of $1$ in every Steiner node, and hence our PHI is exhaustive for every pure half-integer spanning vertex.
\section*{Acknowledgments}
 Stefano Gualandi acknowledges the contribution of the National Recovery and Resilience Plan, Mission 4 Component 2 - Investment 1.4 - NATIONAL CENTER FOR HPC, BIG DATA AND QUANTUM COMPUTING (project code: CN\_00000013) - funded by the European Union - NextGenerationEU. The work of Ambrogio Maria Bernardelli is supported by a Ph.D. scholarship funded under the``Programma Operativo Nazionale Ricerca e Innovazione'' 2014--2020. The work of Eleonora Vercesi is supported by the Swiss National Science Foundation project n. 200021\_212929 / 1 ``Computational methods for integrality gaps analysis'', project code: 36RAGAP

\setcounter{biburllcpenalty}{7000}
\setcounter{biburlucpenalty}{8000}
\printbibliography
\newpage
\appendix

\section{Making use of the Multi Commodity Flow formulation}

\subsection{Proof of \texorpdfstring{\Cref{lemma:constr-redundancy}}{Lemma 3.2}} \label{app:lemma-proof}

Before proving \Cref{lemma:constr-redundancy}, we recall the  \emph{Multi Commodity Flow} (MCF) formulation~\cite{chopra2001polyhedral}:

\begin{subequations}
	\begin{align}
		\label{eq:mcf} \min & \sum_{\{i, j\} \in E} c_e (x_{ij} + x_{ji}) & \\    
		\mbox{s.t.} \quad & x_{i j}+x_{j i} \leq 1, & e=\{i, j\} \in E, \label{eq:mcf_degree} \\
		&  f^t\left(\delta^{-}(v)\right) -  f^t\left(\delta^{+}(v)\right) = 
		\begin{cases}
			-1, \; \text{ if } v = r \\
			1, \; \text{ if } v = t\\
			0, \; \text{ otherwise,}
		\end{cases} & v \in V, \; t \in T \setminus \{r\},\label{eq:mcf_flow}\\
		&  f_{ij}^t \leq  x_{ij} & \forall (i,j) \in A,\label{eq:mcf_limit}\\
		&  f_{ij}^t, x_{ij} \in \{0, 1\}, & \forall (i,j) \in A. \label{eq:mcf_01}
	\end{align}
\end{subequations}

This formulation has a few interesting properties that we use in the following proof of \Cref{lemma:constr-redundancy}.
\begin{proof}[Proof of \Cref{lemma:constr-redundancy}]
	Let $x_{ij}$ be an optimum vertex for the BCR formulation with a positive cost $c$. By~\cite[Theorem~3.2]{chopra2001polyhedral}, it follows
	\begin{equation*}
		\min \{c^\intercal x \, | \, x \in P_{MCF}(n,t)_{|x}\} = \min \{c^ \intercal  x \, | \, x \in P_{BCR}(n,t)\}.
	\end{equation*}
	Hence, there exists a configuration of variables $f_{ij}^t$ such that $f_{ij}^t \leq x_{ij}$ for every $t \in T, \; i,j \in V$ and $\sum_i f_{ij}^t - \sum_i f_{ji}^t = 0$ for every $t \in T, \; j \in V \setminus T$. 
	Because $x_{ij}$ is optimal for strictly positive costs, we have that $x_{ij} = \max_{t \in T} f_{ij}^t$ and so there exists $t_{ij} \in T$ such that $x_{ij} = f_{ij}^{t_{ij}}$. 
	Now, let $k \in V \setminus T$. For every $a \in \delta^+(k)$, that is, for every $l \in V\setminus \{k\}$ we have that 
	\begin{align*}
		x_a & = x_{kl}                      & \text{by definition} \\
		& = f_{kl}^{t_{kl}}             & \text{by maximization }  \\
		& \leq \sum_i f_{ki}^{t_{kl}}   & \text{by nonnegativity} \\
		& = \sum_i f_{ik}^{t_{kl}}      & \text{by \eqref{eq:mcf_flow}} \\
		& \leq \sum_i x_{ik}            & \text{by \eqref{eq:mcf_limit}} \\
		& = x(\delta^-(k))              & \text{by definition}
	\end{align*}
	which is equivalent to Constraint \eqref{eq:scip_inout2}.
\end{proof}
\subsection{Details on PHI spanning vertices}\label{app:PHI-sp}
We conjecture that, for every PHI spanning vertex, every Steiner node has indegree exactly one. We believe this property is true because of the following reasoning: first of all, because of \Cref{lemma:oriented-cycles}, there are no loops of length $2$, and so every edge can be oriented in only one way. Suppose there exists a Steiner node $k$ such that $indeg(k) > 1$, and since the maximum inflow is $1$ because of Constraint \eqref{eq:cm_inflow} and we are dealing with pure half-integer solutions, we have that $indeg(k) = 2$. Then, regarding the MCF formulation, there exist $T_1, T_2 \subset T$, $T_1, T_2 \neq \emptyset$, and $i, j \in V$ such that $f_{ik}^{t_1} = f_{jk}^{t_2} = 1/2$, $\forall t_1 \in T_1, t_2 \in T_2$. We conjecture that is always possible to construct $y, z \in P_{BCR}(n,t)$ such that $y \neq z$ and $x = \frac{1}{2}y + \frac{1}{2} z$, leading to a contradiction. In particular, $y$ is derived by $x$ by setting $f_{ik}^{t_1} = 1, \;  f_{ik}^{t_2} = 0$, $\forall t_1 \in T_1, t_2 \in T_2$, and all the other variables are set accordingly to \eqref{eq:mcf_flow}, while $z$ is derived by $x$ by setting $f_{ik}^{t_1} = 0, \;  f_{ik}^{t_2} = 1$, $\forall t_1 \in T_1, t_2 \in T_2$, and all the other variables are set accordingly to \eqref{eq:mcf_flow}.

\section{Enumerating vertices with Polymake} \label{app:polymake}
\subsection{Enumerating vertices with Polymake and weakness of the BCR formulation}
\label{sec:gap_DCUT}

As discussed in the previous section, we aim to solve the Gap problem on every vertex.
Hence, we need an exhaustive list of vertices of the polytope $P_{BCR}(n,t)$ for each $n \geq 3$, for each $ 3 \leq t \leq n - 1$.
We use the software Polymake~\cite{gawrilow2000polymake}, designed for managing polytope and polyhedron.
We implement the Gap function in Python, using the commercial solver Gurobi 11.0~\cite{gurobi} to model and solve the Gap model.

On every vertex, we solve the Gap problem to get the maximum possible value of the integrality gap associated with that vertex. 
\Cref{tab:gaps_dcut_cm} reports our computational results.
From these results, we can draw several conclusions. 
First, Polymake can only exhaustively generate vertices for $n$ up to 5. 
Second, for all these cases, the value of the integrality gap is exactly 1.
For larger values of $n$, the enumeration becomes computationally untractable.
Furthermore, by running the Gap problem on many vertices of the BCR formulation, we observe that the problem turns out to be infeasible.
By analyzing the minimum infeasibility set, we observe that many vertices of the BCR formulation are incompatible with the triangle inequality of the cost vector $c$ nor with its non-negativity.
We tackle both issues in the following sections by (a) introducing a novel formulation tailored for the metric case and (b) designing two heuristic algorithms for enumerating nontrivial vertices.

\begin{table}[!htb]
	\caption{Number of feasible and optimal vertices for $P_{BCR}$ and $P_{CM}$. The column ``time'' reports the time of the generation in seconds, while the column ``gap'' reports the maximum gap obtained for the optimal vertices. While the BCR polytope has several (feasible) vertices that cannot be optimal for any metric cost, the CM polytope does not suffer this issue.}
	\label{tab:gaps_dcut_cm}    
	\centering
	\begin{tabular}{ccrrrrcrrrr}
		\toprule
		&   & \multicolumn{4}{c}{$P_{BCR}$} && \multicolumn{4}{c}{$P_{CM}$} \\
		$n$ &  $t$  & time & feasible & optimal & gap && time & feasible & optimal & gap \\
		\midrule
		4 & 3 &  0.04 & 256 & 70 & 1.00 && 0.73 & 4 & 4 & 1.00\\
		5 & 3 &  4563.57 & 28\,345 & 3\,655 & 1.00 && 44.62 & 5 & 5 & 1.00\\
		5 & 4 &  2798.17 & 24\,297 & 3\,645 & 1.00 && 37.01 & 44 & 44 & 1.00\\
		\bottomrule
	\end{tabular}
	
\end{table}
\subsection{Enumerating vertices with Polymake for the CM formulation}
We enumerate all the vertices of the CM formulation using the software Polymake~\cite{gawrilow2000polymake}. Recalling what we have done in \Cref{sec:gap_DCUT}, we compute the gap value for each vertex by implementing the model \eqref{eq:gap-cm} using Gurobi 11.0.0~\cite{gurobi}.

\Cref{tab:gaps_dcut_cm} reports our computational results.
First, we can observe that the number of vertices generated is smaller than the number of vertices of the BCR formulation, and all of them are feasible. 
As expected, the integrality gap is 1 (Note that it must be a lower bound w.r.t the one of the BCR formulation, which was 1).
Note also that, even in this case, Polymake cannot generate vertices for $n \geq 6$. 
These limited results motivate the design of the two heuristic algorithms to generate nontrivial vertices introduced in the next section. 
\section{Pure one-quarter algorithm}\label{app:POQ}
We present a heuristic for the POQ case that works as follows:
	\begin{enumerate}
		\item Generate all the non-isomorphic graphs having (i) every node of degree at least equal to 3, and (ii) exactly $n + 3t - 4$ arcs.
		\item Filter the list of vertices computed at Step 1 by excluding all graphs with more than $n - t - 1$ nodes with degree 3. In POQ vertices, there are $n - t$ Steiner nodes that must have a minimum degree of 3, and the terminals have a minimum degree of 4.
		\item For each oriented graph, we use {\ttfamily watercluster2} to get all possible orientations of edges, assuming that the maximum indegree must be equal to 4.
		\item We filter out the list obtained at Step 3 and keep only the directed graphs having (i) exactly one node with in-degree 0 (the root), (ii) exactly $t - 1$ nodes with in-degree 4, (iii) exactly $ n - t$ nodes with in-degree 1.
	\end{enumerate}
Hence, we derive \Cref{alg:poq}: a modified version of the PHI algorithm to find POQ vertices exploiting the properties described in \Cref{subsec:beyondphi}.

\begin{algorithm}[!htb]
	\caption{Pure one-quarter vertices search}\label{alg:poq}
	\begin{algorithmic}
		\STATE{\textbf{procedure} POQ$(n,t)$}
		\STATE{$\mathbb{G} = \{G =(V,E) \; | \; G \text{ connected}, \; deg(i) \geq 3 \, \forall i \in V, \; |V| = n, \; |E| = n+3t-4\}$}
		\STATE{$\text{di}\mathbb{G} = \emptyset$}
		\FOR{$G=(V,E) \in \mathbb{G}$}
		\IF{$|\{i \in V \; | \; deg(i) = 3 \}| \leq n -t$}
		\STATE{add to $\text{di}\mathbb{G}$ every non-isomorphic orientation of $G$ s.t.}
		\STATE{\hspace*{4.5em}$\cdot$ every edge can be oriented in only one way}
		\STATE{\hspace*{4.5em}$\cdot$ every node has a maximum indegree of $4$}
		\ENDIF
		\ENDFOR
		\STATE{$\mathcal{V} = \emptyset$}
		\FOR{$\text{di}G = (V, A) \in \text{di}\mathbb{G}$}
		\IF{$|\{i \in V \; | \; indeg(i) = 0 \}| = 1$}
		\IF{$|\{i \in V \; | \; indeg(i) = 1 \}| = n-t$}
		\IF{$|\{i \in V \; | \; indeg(i) = 4 \}| = t-1$}
		\STATE{$x_{ij} = 1/4 \text{ iff } (i,j) \in A$ is a solution of $P_{CM}(n,t)$ with}
		\STATE{\hspace*{7.5em}$\cdot$ $\{r\} = \{i \in V \; | \; indeg(i) = 0 \}$}
		\STATE{\hspace*{7.5em}$\cdot$ $V \setminus T = \{i \in V \; | \; indeg(i) = 1 \}$}
		\STATE{\hspace*{7.5em}$\cdot$ $T \setminus \{r\} = \{i \in V \; | \; indeg(i) = 4 \}$}
		\IF{$x$ is a feasible vertex of $P_{CM}(n,t)$}
		\STATE{add $x$ to $\mathcal{V}$}
		\ENDIF
		\ENDIF
		\ENDIF
		\ENDIF
		\ENDFOR
		\RETURN $\mathcal{V}$
	\end{algorithmic}
\end{algorithm}

\end{document}